\tikzset{%
    symbol/.style={%
        draw=none,
        every to/.append style={%
            edge node={node [sloped, allow upside down, auto=false]{$#1$}}}
    }
}
\theoremstyle{plain} 
\newtheorem{theorem}{Theorem}[section] 
\newtheorem{lemma}[theorem]{Lemma} 
\newtheorem{proposition}[theorem]{Proposition}
\newtheorem{corollary}[theorem]{Corollary}
\theoremstyle{definition} 
\newtheorem{definition}[theorem]{Definition}
\newtheorem{example}[theorem]{Example}
\newtheorem{notation}[theorem]{Notation}
\newtheorem{remark}[theorem]{Remark}
\DeclareMathOperator \Orbop {\mathrm{Orb}_\G^{\ab,\op}}
\DeclareMathOperator \Map {Map}
\DeclareMathOperator \Lie {Lie}
\DeclareMathOperator \Hom {Hom}
\DeclareMathOperator \Aut {Aut}
\DeclareMathOperator \Ran {Ran}
\DeclareMathOperator \Diff {Diff}
\DeclareMathOperator \CxMfd {CxMfd}
\DeclareMathOperator \Open {Open}
\DeclareMathOperator \Sh {Sh}
\DeclareMathOperator \Ring {Ring}
\DeclareMathOperator \Fun {Fun}
\DeclareMathOperator \Ell {\mathcal{E\ell\ell}}
\DeclareMathOperator \Orb {Orb^{ab}_{\G}}
\DeclareMathOperator \Nat {Nat}
\DeclareMathOperator \Gau {Gau}
\DeclareMathOperator \SL {SL}
\DeclareMathOperator \GL {GL}
\DeclareMathOperator \ind {ind}
\DeclareMathOperator \res {res}
\DeclareMathOperator \op {op}
\DeclareMathOperator \Res {Res}
\DeclareMathOperator \Top {Top}
\newcommand{\Topv}[1]{#1\text{-}\mathrm{Top}}
\newcommand{\Fin}[1]{#1\text{-}\mathrm{Fin}}
\definecolor{blueish}{HTML}{196F9E}
\def \1 {\mathbf{1}}
\def \H {\mathcal{H}}
\def \O {\mathcal{O}}
\def \X {\mathcal{X}}
\def \G {\mathcal{G}}
\def \F {\mathcal{F}}
\def \M {\mathcal{M}}
\def \E {\mathcal{E}}
\def \J {\mathcal{J}}
\def \W {\mathrm{W}}
\def \Z {\mathbb{Z}}
\def \C {\mathbb{C}}
\def \R {\mathbb{R}}
\def \bT {\mathbb{T}}
\def \L {\mathcal{L}}
\def \Fr {\mathrm{Fr}}
\def \pt {\mathrm{pt}}
\def \id {\mathrm{id}}
\def \ev {\mathrm{ev}}
\def \ab {\mathrm{ab}}
\def \to {\rightarrow}
\begin{document}

\begin{center} 
{\bf EQUIVARIANT ELLIPTIC COHOMOLOGY AND THE 2-LOOP GROUPOID} \\
\vskip 8mm

M. SPONG

\end{center}

\vskip 8mm

\begin{abstract}
Following an outline of Rezk, we give a construction of complex-analytic $G$-equivariant elliptic cohomology for an arbitrary compact Lie group $G$ and we prove some of its fundamental properties. The construction is parametrised over the orbit category of the groupoid of principal $G$-bundles over 2-dimensional tori and generalises Grojnowski's construction of equivariant elliptic cohomology.
\end{abstract} 

\tableofcontents

\footnotetext{2020 Mathematics Subject Classification 55N34}

\section{Introduction}

Elliptic cohomology is a type of cohomology theory associated with elliptic curves that first appeared in the late 1980s in \cite{LRS}. An important motivation for its construction was the elliptic genus, which was shown by Witten to be related to certain invariants of 2-dimensional quantum field theories in \cite{Witten}. It has since been an open problem to find a geometric interpretation of elliptic cohomology in terms of 2-dimensional field theories that is analogous to the interpretation of K-theory in terms of vector bundles.

The problem of finding a geometric meaning becomes more tractable in the equivariant context. The first equivariant version of elliptic cohomology was built in the early 1990s by Grojnowski \cite{Groj} in the complex-analytic setting. Although this construction was highly useful with applications in both geometric representation theory (e.g. \cite{GKV}) and homotopy theory (e.g. \cite{Rosu1}), in its original form it did not shed much light on the geometry underlying elliptic cohomology. Recently, there have appeared several constructions of complex-analytic equivariant elliptic cohomology in terms of 2-dimensional quantum field theories, in particular work of Kitchloo in \cite{Kitchloo} and work of Berwick-Evans and Tripathy in \cite{BET1} and \cite{BET2}, the latter continuing the program of Segal \cite{Segal} and of Stolz and Teichner \cite{StolzTeichner}. On the other hand, in \cite{GepnerMeier} Gepner and Meier have given a homotopy-theoretic construction of equivariant elliptic cohomology over the integers along lines sketched by Lurie in \cite{Lurie}. A clearer understanding of the link between constructions originating in physics with those originating in homotopy theory would contribute towards the geometric interpretation of elliptic cohomology over the integers. 

The construction in this paper establishes a concrete and functorial relationship between the topological category of flat $G$-bundles on genus 1 surfaces on the one hand and $G$-equivariant elliptic cohomology on the other. It follows the outline given by Rezk in \cite{Rezktalk} (see also \cite{Rezk}) which incorporates work of the author in \cite{Spong}, and may be viewed as a modern version of Grojnowski's complex-analytic equivariant theory, situated between constructions based in physics and constructions coming from homotopy theory. We summarise the construction as follows.  \\

\noindent {\bf Summary of the construction.} Let $G$ be a compact Lie group.  The first ingredient in the construction is the $2$-loop groupoid $\L^2_G$ of $G$. An object of $\L^2_G$ is a smooth principal $G$-bundle $P$ over a 2-dimensional torus $\Sigma$ and a morphism is a $G$-diffeomorphism $P \to P'$ covering a diffeomorphism $\Sigma \to \Sigma'$ of base manifolds. We regard $\L^2_G$ as a topologically enriched groupoid by providing each set of maps with the compact-open topology. 

Given an object $P$ in $\L^2_G$, we make use of Rezk's notion of a $\C$-frame in $P$, which is a continuous homomorphism $\gamma: \C \to \Aut(P)$ that induces a transitive $\C$-action on the base manifold $\Sigma$ of $P$. We show that there is a canonical bijection
\begin{align*}
\Fr(P) &\xrightarrow{\sim} \{ \text{complex structures on $\Sigma$ + flat connection on $P$} \} \\
\gamma &\mapsto (\Lambda_\gamma, \eta_\gamma,\nu_\gamma)
\end{align*}
between the set of all frames in $P$ and the set of triples $(\Lambda,\eta,\nu)$, where $\C/\Lambda$ is a complex elliptic curve, $\eta: \C/\Lambda \cong \Sigma$ is a diffeomorphism (unique up to translation in $\C/\Lambda$) and $\nu$ is a flat connection on $P\to \Sigma$. It turns out that for each frame $\gamma$ in $P$ there exists a torus $T \leq \Aut(P)$ containing the image of $\gamma$. Thus, by respectively considering the subsets 
\[
\Fr(T):= \{\gamma: \C \to \Aut(P) \ | \ \gamma(\C) \subset T \} \ \subset \Fr(P)
\]
for each torus $T$ in $\Aut(P)$, we capture all frames in $P$. This motivates Rezk's definition of the abelian orbit category $\mathrm{Orb}^{\ab}_{\L^2_G}$ of the topological groupoid $\L^2_G$. This category may be seen as a generalisation of the abelian orbit category of a topological group, where one considers only objects with compact abelian isotropy. An object is a pair $(P,T)$, where $P$ is an object in $\L^2_G$ and $T \leq \Aut(P)$ is a compact abelian subgroup, and the automorphism group $W(\Aut(P)/T)$ of $(P,T)$ may be identified with the Weyl group of $T$ in $\Aut(P)$.

The set $\Fr(T)$ is a $\C^\times$-equivariant open subset of the complex vector space $\Hom(\C,T)$ of Lie group homomorphisms. We show that there is a functor
\begin{align*}
\Fr: \mathrm{Orb}^{\ab}_{\L^2_G} &\longrightarrow \CxMfd^{\C^\times}  \\
(P,T) &\longmapsto \Fr(T)
\end{align*}
into the category of $\C^\times$-equivariant complex manifolds with $\C^\times$-equivariant holomorphic maps. In particular, $\Fr(T)$ has the structure of a $\C^\times \times W(\Aut(P)/T)$-equivariant complex manifold and the action corresponds to a suitable notion of equivalence of the geometric data underlying different frames in $T$. 

Another ingredient in the construction is a certain category $\Sh_{\Fr}^{\C^\times}$ of $\C^\times$-equivariant holomorphic sheaves over spaces of the form $\Fr(T)$. This category comes equipped with a functor 
\[
\Sh_{\Fr}^{\C^\times}\longrightarrow \mathrm{Orb}^{\ab,\op}_{\L^2_G}
\]
which is a cocartesian fibration. 

A further ingredient is complex analytic $T$-equivariant cohomology, which is a functor $\H^\bullet_T$ defined on $T$-equivariant spaces and taking values in $\C^\times$-equivariant, $\Z$-graded holomorphic sheaves over $\Hom(\C,T)$. We delay the actual definition of $\H^\bullet_T$ until later in the introduction. 

The main construction of the paper is a functor
\[
\Ell^\bullet_G: \Fin{G}^{\op} \longrightarrow \Gamma( \mathrm{Orb}^{\ab,\op}_{\L^2_G}, \Sh_\Fr^{\C^\times})
\]
defined on the category of finite $G$-CW complexes and taking values in sections of the above fibration. The functor sends $X$ to a section whose value at $(P,T)$ is the sheaf  
\[
\Ell^\bullet_{G,P,T}(X) := \H^\bullet_T(\Map_G(P,X))_{\Fr(T)}
\]
which we call the $G$-equivariant elliptic cohomology of $X$ at $(P,T)$. Here $\Map_G(P,X)$ is the $T$-space of $G$-equivariant maps $P \to X$, with $T$ acting by precomposition, and the subscript $\Fr(T)$ denotes the restriction of the sheaf along $\Fr(T) \hookrightarrow \Hom(\C,T)$. 

Thus, $\Ell^\bullet_{G,P,T}(X)$ is a $\C^\times \times W(\Aut(P)/T)^{\op}$-equivariant holomorphic sheaf over $\Fr(T)$ with structure map
\[
\O^\bullet_{\Fr(T)} = \Ell^\bullet_{G,P,T}(\pt) \to \Ell^\bullet_{G,P,T}(X)
\]
induced by the map from $X$ to a point. Finally, it is important to note that $\Ell^\bullet_{G,P,T}(X)$ is trivial for all bundles $P$ that do not admit a flat connection, since in that case $\Fr(T)$ is empty for all $T \leq \Aut(P)$.   \\

\noindent {\bf Main results.} Our main results rely on the following localisation theorem for $\H^\bullet_T$, which is Theorem \ref{local} in the body of the paper and is adapted from Theorem 7.2 in \cite{Spong}.\\

\noindent LOCALISATION THEOREM.
{\em Let $S$ be a $\C^\times$-equivariant open subset of $\Hom(\C,T)$ and, for a $T$-space $Y$, write $Y^S$ for the union
\[
Y^S := \bigcup_{\gamma \in S} Y^\gamma
\]
where $Y^\gamma$ denotes the fixed-point subspace $Y^{\gamma(\C)}$ of $Y$. The map induced by the inclusion $Y^S \hookrightarrow Y$ of $T$-spaces is an isomorphism
\[
\H^\bullet_T(Y)_S \cong \H^\bullet_T(Y^S)_S
\]
of $\C^\times$-equivariant sheaves over $S$.}\\

The localisation theorem yields interesting results if we set $S = \Fr(T)$ and $Y = \Map_G(P,X)$ for the data $(X,P,T)$ as defined above. In particular, it enables us to show that the reduced version of $\Ell^\bullet_{G}$ is exact and has a canonical suspension isomorphism. Combining the results of Propositions \ref{opcarry}, \ref{exactness} and \ref{susp}, we have the following theorem. \\

\noindent THEOREM A.
{\em The functor $\Ell^\bullet_G$ is a cohomology theory and takes values in cocartesian sections.}\\

The most useful objects $(P,T)$ at which to compute $\Ell^\bullet_{G,P,T}(X)$ are those which are maximal in the sense that the identity component of $T$ is maximal among tori in $\Aut(P)$. It is precisely these objects that have discrete automorphism groups $W(\Aut(P)/T)$. 

Following somewhat more directly from the localisation theorem for $\H^\bullet_T$ are Propositions \ref{locallydokelly} and \ref{walkystalky}, which we combine in the following theorem.  \\

\noindent THEOREM B.\hfill
\begin{enumerate}[(i)]
\item {\em  The inclusion $\Map_G(P,X)^{\Fr(T)} \hookrightarrow \Map_G(P,X)$ of $T$-spaces induces an isomorphism
\[
\Ell^\bullet_{G,P,T}(X) \xrightarrow{\sim} \H^\bullet_T(\Map_G(P,X)^{\Fr(T)})_{\Fr(T)}
\]
in $\Sh_\Fr^{\C^\times}$ natural in $(X,P,T)$.}
\item {\em  For any $\gamma \in \Fr(T)$, the inclusion $\Map_G(P,X)^{\gamma} \hookrightarrow \Map_G(P,X)$ of $T$-spaces induces an isomorphism
\[
\Ell^\bullet_{G,P,T}(X)_\gamma  \xrightarrow{\sim} \H^\bullet_T(\Map_G(P,X)^{\gamma})_\gamma
\]
of stalks at $\gamma$.} \end{enumerate} 

The fixed-point subspace $\Map_G(P,X)^{\gamma}$ which appears in the theorem above may be identified with the space of sections of the associated bundle $P \times_G X \to \Sigma$ which are parallel with respect to the flat connection encoded by $\gamma$. \\

\noindent {\bf Relationship to elliptic curves.} We justify the claim that $\Ell^\bullet_G$ ought to be regarded as a version of $G$-equivariant elliptic cohomology. The subsheaf of invariants
\[
\Ell^\bullet_{G,P,T}(X)^{\C^\times \times W(\Aut(P)/T)^{\op}}
\]
is equivalently a sheaf on the stack
\begin{equation}\label{stack}
[\Fr(T) \sslash \C^\times \times W(\Aut(P)/T)]
\end{equation}
which classifies frames in $T$. Let $G = U(1)$, let $P$ be the trivial bundle $U(1) \times \bT^2$ over the standard torus $\bT^2 = \R^2/\Z^2$ and let $T$ be the maximal torus $U(1) \times \bT^2$ in
\[
\Aut(P) \cong \Map(\bT^2,U(1)) \rtimes \Diff(\bT^2)
\]
consisting of constant maps $\bT^2 \to U(1)$ and translations of $\bT^2$. Then $W(\Aut(P)/T)$ is what is sometimes called the {\em elliptic Weyl group} of $U(1)$
\[
W(\Aut(P)/T) \cong \Hom(\bT^2,U(1)) \rtimes \GL_2(\Z)
\]
and the stack \eqref{stack} is isomorphic to the universal elliptic curve $\E$ over the moduli stack $\M_{ell}(\C)$ of complex elliptic curves. In fact, with these hypotheses, 
\[
\Ell^\bullet_{G,P,T}(X)^{\C^\times \times W(\Aut(P)/T)^{\op}}
\]
recovers Grojnowski's seminal construction of the $U(1)$-equivariant elliptic cohomology \cite{Groj} of $X$ (this follows from the main result in \cite{Spong}, reformulated in this paper in Proposition \ref{groj}). Moreover, if $G$ is the trivial group $e$, $P = e \times \bT^2 \to \bT^2$, $T$ is the group of translations of $\bT^2$ and $X$ is a point, then the global sections of the invariant subsheaf
\begin{align*}
\Ell^\bullet_{e,P,T}(\pt)^{\C^\times \times W(\Aut(P)/T)^{\op}} \cong (\O^\bullet_{\Fr(T)})^{\C^\times \times \GL_2(\Z)^{\op}}
\end{align*}
is the graded ring $MF^\bullet_{weak}$ of weak modular forms (this also follows from Remark 7.12 in \cite{Spong} and the discussion preceding Proposition \ref{groj}). \\

\noindent {\bf Complex-analytic $T$-equivariant cohomology.} Let $T^\C$ denote the complex vector space $\Hom(\C,T)$. Complex-analytic $T$-equivariant cohomology is a contravariant functor\footnote{The functor $\H^\bullet_T$ adapts Definition 6.2 in \cite{Spong} to the context of this paper.}
\[
\H^\bullet_T: \Topv{T}^{\op} \to \Sh(T^\C,\O^\bullet)^{\C^\times}
\]
from the category of $T$-spaces into the category of certain $\C^\times$-equivariant holomorphic sheaves over $T^\C$. We construct it in three steps:
\begin{enumerate}
\item We define a $\C^\times$-equivariant sheaf $\O^\bullet_T$ of graded rings over $T^\C$. It is concentrated in even degrees and its $2k$th piece is given by the sheaf of holomorphic functions $U \to \omega^{\otimes k}$, where $U \subset T^\C$ is an open subset and $\omega := \Hom(\C,\R/\Z)$. 
\item We define a functor on finite $T$-CW complexes
\[
\F^\bullet_T: \Fin{T}^{\op} \longrightarrow \Sh(T^\C,\O^\bullet)^{\C^\times}
\]
as follows. For a finite $T$-complex $Y$, let $\underline{H}_T^\bullet(Y)$ be the constant sheaf associated to the Borel $T$-equivariant cohomology $H^\bullet(Y \times_T ET;\Z)$ of $Y$. We regard this sheaf as equipped with the trivial $\C^\times$-action. There are canonical maps
\[
\underline{H}^\bullet_T(\pt) \to \underline{H}_T^\bullet(Y) \quad \text{and} \quad \underline{H}^\bullet_T(\pt) \to \O^\bullet_T
\]
of $\C^\times$-equivariant sheaves of graded rings, where the first is induced by $Y \to \pt$ and the second regards an element of $H^\bullet(BT;\Z)$ as a holomorphic function on $T^\C$. The functor $\F^\bullet_T$ sends $Y$ to the tensor product
\[
\F^\bullet_T(Y) := \underline{H}_T^\bullet(Y) \otimes_{\underline{H}^\bullet_T(\pt)} \O^\bullet_{T}
\]
of $\C^\times$-equivariant $\underline{H}^\bullet_T(\pt)$-algebras.
\item We define $\H^\bullet_T$ as the right Kan extension of $\F^\bullet_T$ along the inclusion $\Fin{T}^{\op} \hookrightarrow \Topv{T}^{\op}$ of finite $T$-CW complexes into $T$-spaces.
\end{enumerate}

The functor $\H_T^\bullet$ is not actually a cohomology theory on all $T$-spaces because it is not exact on non-finite spaces. For any given $\C^\times$-equivariant open subset $S$ of $T^\C$, we show that there is a subcategory 
\[
\Topv{T}(S) \subset \Topv{T}
\]
such that the restriction
\[
Y \mapsto \H^\bullet_T(Y)_S
\]
of the sheaf to $S$ is exact on $\Topv{T}(S)$. This subcategory is defined as the full subcategory of spaces $Y$ such that for each $\gamma \in S$, there is a $\C^\times$-equivariant open neighbourhood $\gamma \in U_\gamma \subset S$ satisfying:
\begin{enumerate}[(i)]
\item $Y^{U_\gamma}$ has the $T$-homotopy type of a finite $T$-CW complex, and
\item $Y^{U_\gamma} = Y^{\gamma}$.
\end{enumerate}
If these conditions are satisfied, then we say that $Y$ is {\em locally finite over $S$}. 
Such spaces satisfy a stronger version of the localisation theorem (Proposition \ref{stalks}).\\

\noindent STRONG LOCALISATION THEOREM. {\em Let $Y$ be a $T$-space that is locally finite over $S$ and let $\gamma \in S$. The inclusion map $Y^{\gamma} \hookrightarrow Y$ induces an isomorphism
\[
\H^\bullet_T(Y)_\gamma \cong \H^\bullet_T(Y^{\gamma})_\gamma 
\]
of stalks at $\gamma$.}\\

We show in Section \ref{locally} that, for a finite $G$-complex $X$, the $T$-space $\Map_G(P,X)$ is locally finite over $\Fr(T)$ and that each map in $\Map_G(P,X)^\gamma$ has image contained in a single $G$-orbit of $X$. These results allow us to prove that $\Ell^\bullet_G$ is in fact a cohomology theory, in spite of the fact that $\Map_G(P,-)$ does not preserve equivariant pushouts and the right Kan extension is not exact in general. \\

\noindent {\bf Outline of the paper.} In Section \ref{s2}, we introduce the basic geometric ingredients of the construction, which are the $2$-loop groupoid $\L^2_G$ of a compact Lie group $G$ and the notion of a frame in a principal bundle. In Section \ref{s3}, we introduce the abelian orbit category $\Orb$ of a topological groupoid $\G$ and, in the case where $\G$ is the 2-loop groupoid $\L^2_G$, we define the functor $\Fr$ sending an object $(P,T)$ to the subspace of frames $\Fr(T) \subset \Hom(\C,T)$ and prove some results concerning the structure of $\Fr(T)$. In Section \ref{s4}, for any given subfunctor $S$ of $\Hom(\C,-)$ (of which $\Fr$ is an example) we define a category $\Sh_S^{\C^\times}$ of $\C^\times$-equivariant holomorphic sheaves. This category is fibered over the (opposite) abelian orbit category of $\L^2_G$ and its fiber over $(P,T)$ is the category of $\C^\times$-equivariant $\O^\bullet_{S(T)}$-algebras. These categories serve as the targets for most of the functors that we construct. In Section \ref{s5}, we introduce complex analytic $T$-equivariant cohomology $\H^\bullet_T$ and prove several of its localisation properties, and we consider the reduced version $\widetilde{\H}^\bullet_T$ in order to formulate results about exactness and suspension. In Section \ref{s6}, we consider again an arbitrary topological groupoid $\G$ and we construct a functor 
\[
\Orbop \longrightarrow \Fun(\Topv{\G}^{\op},\Sh^{\C^\times})
\]
where $\Topv{\G}$ is the category of topological functors $\Fun(\G^{\op},\Top)$. (In fact, we define such a functor for each subfunctor $S$ of $\Hom(\C,-)$, because this is necessary for Proposition \ref{natural}, but the main case is $S = \Hom(\C,-)$.) We construct this functor because we find that it is easier than constructing the associated functor
\begin{align*}
\H^\bullet_\G: \Topv{\G}^{\op} &\longrightarrow \Gamma(\Orbop,\Sh^{\C^\times})\\
F & \longmapsto \left((P,T) \mapsto \H^\bullet_T(F(P) \right)
\end{align*}
which is what we are really interested in. Taking the latter functor, we define $\Ell^\bullet_G$ by first setting $\G = \L^2_G$ and precomposing with 
\begin{align*}
\Fin{G}^{\op} &\longrightarrow \Topv{\L^2_G}^{\op}  \\
X &\longmapsto \Map_G(-,X).
\end{align*}
We then postcompose with the functor
\[
\Gamma(\mathrm{Orb}^{\ab,\op}_{\L^2_G},\Sh^{\C^\times}) \longrightarrow \Gamma(\mathrm{Orb}^{\ab,\op}_{\L^2_G},\Sh_\Fr^{\C^\times}) 
\]
induced by restricting sheaves to subspaces of frames. The definition of $\Ell^\bullet_G$ is then followed by proofs of the basic properties of $\Ell^\bullet_G$ which constitute our main results. These results rely heavily on the properties of $\H^\bullet_T$ established in Section \ref{s5}, along with the results of Section \ref{locally} where we show that $\Map_G(P,X)$ is locally finite over $\Fr(T)$. We carry out much of the construction for a general topological groupoid $\G$ rather than the 2-loop groupoid $\L^2_G$ specifically because this will be useful in future applications. \\

\noindent {\bf Notation and conventions.} 
The notation $H \leq G$ denotes a closed subgroup of a topological group $G$. We write $\bT^2$ for the standard 2-dimensional torus $\R^2/\Z^2$. If $M$ is a smooth manifold, we write $\Diff(M)$ for the group of diffeomorphisms of $M$. All topological spaces are assumed to be compactly generated and weak Hausdorff. \\

\noindent {\bf Acknowledgements:}
We are indebted to Charles Rezk for the program that we pursue in this paper. We would like to thank him for generously sharing his ideas and for helpful conversations.  We also wish to thank Jack Davies, Nikolay Konovalov and Gerd Laures for conversations which helped in the writing of this paper.

\section{Basic geometric ingredients}\label{s2}

\subsection{The $2$-loop groupoid}

\begin{definition}
Let $G$ be a compact Lie group. The $2$-loop groupoid $\L^2_G$ of $G$ is the topological groupoid $\L^2_G$ whose objects are smooth principal $G$-bundles $P \to \Sigma$ over a smooth genus 1 surface $\Sigma$. A morphism $(P \to \Sigma) \to (P' \to \Sigma')$ in this category is a $G$-equivariant diffeomorphism
\[
\begin{tikzcd}
P \ar[r,"\sim"] \ar[d] & P' \ar[d] \\
\Sigma \ar[r,"\sim"] & \Sigma'
\end{tikzcd}
\]
of the total manifolds covering a diffeomorphism of the base manifolds. The topology on mapping spaces is the compact-open topology. 
\end{definition}

\begin{notation}
We will often denote a $G$-bundle $P \to \Sigma$ in $\L^2_G$ just by $P$, while the base manifold $\Sigma $ and the map $P \to \Sigma$ will be implicit. We denote the automorphism group of an object $P \to \Sigma$ by $\Aut(P)$. 
\end{notation}

\begin{remark}
An principal $G$-bundle $\pi: P \to \Sigma$ in $\L^2_G$ determines a homomorphism 
\[
\tilde{\pi}: \Aut(P) \to \Diff(\Sigma)
\]
of topological groups from the automorphism group of $P \to \Sigma$ into the diffeomorphism group of $\Sigma$. The kernel of $\tilde{\pi}$ is, by definition, the gauge group $\Gau(P)$ of $P$, and we have the exact sequence
\[
1 \longrightarrow \Gau(P) \longrightarrow \Aut(P) \xrightarrow{\tilde{\pi}} \Diff(\Sigma).
\]
\end{remark}

\begin{example}
Let $P := \bT^2 \times G \to \bT^2$ be the trivial bundle over the standard torus. In this case, $\Aut(P)$ is the semidirect product group $\Map(\bT^2,G) \rtimes \Diff(\bT^2)$, sometimes called the extended double loop group of $G$. The semidirect product structure is determined by the natural action of $\Diff(\bT^2)$ on $\Map(\bT^2,G)$ by precomposition. The gauge group in this case is $\Gau(P) = \Map(\bT^2,G)$, which acts on $P$ via the $G$-action on each fiber. 
\end{example}

\subsection{Frames}

The following notion of a frame was introduced in \cite{Rezktalk}.

\begin{definition}
Let $P \to \Sigma$ be an object in $\L^2_G$. A frame in $P$ is a homomorphism $\gamma: \C \to \Aut(P)$ of topological groups such that the induced action of $\C$ on $\Sigma$ is transitive.
\end{definition}

\begin{example}
Let $P := \bT^2 \times G \to \bT^2$ be the trivial $G$-bundle over the standard torus. The identification $\C \cong \R^2$ induces a translation action of $\C$ on $\bT^2 := \R^2/\Z^2$ with isotropy $\Z^2 \subset \C$. Suppose that $\C$ also acts smoothly on $G$ (e.g. by the trivial action). Then the diagonal action of $\C$ on $\bT^2 \times G$ defines a frame in $P$.
\end{example}

Let $\gamma$ be a frame in $P$ and consider the associated $\C$-action on $\Sigma$. Since the action is transitive and smooth, $\Sigma$ is a homogeneous $\C$-space with isotropy group $\Lambda_\gamma$. The isotropy group must be a lattice in $\C$ since $\C$ and $\Sigma$ both have real dimension $2$ and $\Sigma$ is a torus. Since $\Lambda_\gamma$ acts on $P$ by automorphisms covering the identity map of $\Sigma$, we have a commutative diagram
\begin{equation}\label{latty}
\begin{tikzcd}
\Lambda_\gamma \ar[d,hook] \ar[r] & \Gau(P) \ar[d,hook] \\
\C \ar[r,"\gamma"] \ar[d, two heads] & \Aut(P) \ar[d,"\tilde{\pi}"] \\
\C/\Lambda_\gamma \ar[r,"\tilde{\gamma}",dashed] & \Diff(\Sigma) 
\end{tikzcd}
\end{equation}
The transitive $\C$-action on $\Sigma$ induces a diffeomorphism of $\Sigma$ with the elliptic curve $\C/\Lambda_\gamma$, and the diffeomorphism is unique up to translation in $\C/\Lambda_\gamma$. 

\begin{proposition}\label{bijecto}
Let $P \to \Sigma$ be an object in $\L^2_G$. There is a bijection between the set $\Fr(P)$ of frames in $P$ and the set of triples $(\Lambda,\eta,\nu)$ where $\Lambda \leq \C$ is a lattice, $\eta: \C/\Lambda \cong \Sigma$ is a diffeomorphism (unique up to translation in $\C/\Lambda$) and $\nu$ is a flat connection on $P \to \Sigma$.
\end{proposition}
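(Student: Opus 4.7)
The plan is to define maps in both directions and verify that they are mutually inverse. In the forward direction, given a frame $\gamma$, the lattice $\Lambda_\gamma$ and the diffeomorphism $\eta_\gamma$ come directly from diagram \eqref{latty}, so the main task is to manufacture the flat connection $\nu_\gamma$. I would define $\nu_\gamma$ to be the horizontal distribution whose subspace at $p \in P$ is the image of the infinitesimal action
\[
d\gamma_0: \C \longrightarrow T_pP, \qquad \zeta \longmapsto \tfrac{d}{dt}\big|_{t=0} \gamma(t\zeta)(p).
\]
Verifying this is a flat connection amounts to checking: (i) transversality to the fiber with correct dimension, which follows because $d\pi_p \circ d\gamma_0$ is the infinitesimal $\C$-action at $\pi(p)$, an isomorphism $\C \xrightarrow{\sim} T_{\pi(p)}\Sigma$ since the $\C$-action on $\Sigma$ is transitive with discrete isotropy $\Lambda_\gamma$; (ii) $G$-invariance, because each $\gamma(z) \in \Aut(P)$ commutes with the $G$-action; (iii) involutivity, because $\C$ is abelian, so the fundamental vector fields commute and Frobenius supplies integrability.

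For the inverse direction, given $(\Lambda, \eta, \nu)$, I would construct a frame $\gamma$ by parallel transport along $\nu$. For $z \in \C$ and $p \in P$, pick any lift $w \in \C$ of $\eta^{-1}(\pi(p))$ and let $c_{z,p}: [0,1] \to \Sigma$ be the path $s \mapsto \eta([w+sz])$; this is independent of the choice of $w$ since two lifts differ by an element of $\Lambda$. Define $\gamma(z)(p) \in P$ to be the result of parallel transporting $p$ along $c_{z,p}$ with respect to $\nu$. I then need to check that $\gamma(z)$ is a $G$-equivariant diffeomorphism of $P$ (from $G$-invariance of $\nu$ and smoothness of parallel transport) covering translation by $[z]$ on $\Sigma \cong \C/\Lambda$, that $\gamma$ is continuous, and crucially that $\gamma$ is a group homomorphism. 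For the last point, the concatenation of the paths $c_{z_2,p}$ and $c_{z_1, \gamma(z_2)(p)}$ is homotopic rel endpoints in $\Sigma$ to $c_{z_1+z_2,p}$, since both descend from paths in $\C$ with the same endpoints and $\C$ is simply connected; flatness of $\nu$ then forces the parallel transports to agree.

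To verify that the two constructions are mutually inverse, I would argue as follows. Starting from $\gamma$ and re-extracting a frame via parallel transport along $\nu_\gamma$, the curve $s \mapsto \gamma(sz)(p)$ is by definition a horizontal lift of $c_{z,p}$ starting at $p$, so the reconstructed frame equals $\gamma$. Conversely, starting from $(\Lambda, \eta, \nu)$ and re-extracting a triple, the induced $\C$-action on $\Sigma$ is translation on $\C/\Lambda$ with isotropy exactly $\Lambda$, the induced diffeomorphism $\eta_\gamma$ agrees with $\eta$ up to translation (both intertwine the transitive $\C$-action), and the horizontal distribution $\nu_\gamma$ consists of tangent vectors to $\C$-orbits which are horizontal for $\nu$ by construction of $\gamma$; matching dimensions then force $\nu_\gamma = \nu$.

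The main obstacle will be establishing the homomorphism property in the inverse direction: this is the step where simple-connectedness of $\C$, flatness of $\nu$, and the geometry of the covering $\C \to \C/\Lambda \cong \Sigma$ all must be invoked simultaneously. The involutivity of $\nu_\gamma$ in the forward direction, which relies on the abelian-ness of $\C$, is the other conceptually important step. The remaining verifications (smoothness, continuity, $G$-equivariance) should be routine once these two points are secured.
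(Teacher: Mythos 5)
Your proof is correct, but your forward construction takes a genuinely different route from the paper's. The paper never differentiates the $\C$-action on $P$: it describes the connection directly through its parallel transport, declaring transport along a path $\alpha$ in $\Sigma$ to be the restriction to the fibre of the single automorphism $\gamma(\widetilde{\alpha}(1)-\widetilde{\alpha}(0))$, where $\widetilde{\alpha}$ is a lift of $\alpha$ through the local diffeomorphism $\C \to \C/\Lambda_\gamma \cong \Sigma$; flatness then follows because a contractible loop in $\Sigma$ lifts to a loop in $\C$. You instead take the horizontal distribution to be the tangent spaces to the $\C$-orbits in $P$ and deduce flatness from involutivity (commuting fundamental vector fields of the abelian group $\C$) via Frobenius. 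What the paper's route buys is that it only uses the fixed diffeomorphisms $\gamma(w)$ together with the smooth homogeneous-space structure on $\Sigma$, so it sidesteps the differentiability of $z \mapsto \gamma(z)(p)$; your route needs the $\C$-action on $P$ to be smooth --- a Bochner--Montgomery-type regularity input, since $\gamma$ is a priori only a continuous homomorphism into $\Aut(P)$ --- both to define $d\gamma_0$ and to know the distribution is smooth, a point you should at least flag (the paper leans on analogous regularity facts elsewhere, so this is a fillable rather than fatal omission). In exchange, your picture makes the flatness and mutual-inverse checks cleaner: the curve $s \mapsto \gamma(sz)(p)$ is visibly the unique horizontal lift, and the dimension count forces $\nu_\gamma = \nu$, whereas the paper leaves ``these constructions are inverse to each other'' as a routine verification. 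Your inverse construction (parallel transport along projected straight-line paths, with well-definedness and the homomorphism property coming from simple connectivity of $\C$ plus flatness) is essentially the paper's, and you in fact spell out the homomorphism property, which the paper omits.
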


\begin{proof}
Let $\gamma$ be a frame for $P$. We have already explained how to associate a lattice $\Lambda = \Lambda_\gamma$ and a diffeomorphism $\eta$ to $\gamma$. It remains to show how $\gamma$ determines a flat connection $\nu$ in $P$, which we describe by its parallel transport operation. Fix a point $b \in \Sigma$ and a point $p \in P_b$ and consider the commutative diagram
\[
\begin{tikzcd}
\C \ar[d,two heads] \ar[r,"\gamma(-)(p)"] & P \ar[d,"\pi"] \\
\C/\Lambda_\gamma \ar[r,"\tilde{\gamma}(-)(b)"]& \Sigma.
\end{tikzcd}
\]
The composite of the left and bottom arrows is a local diffeomorphism, and so any path $\alpha$ in $\Sigma$ has a smooth lift $\widetilde{\alpha}$ to $\C$. The complex number $\widetilde{\alpha}(1)-\widetilde{\alpha}(0)$ does not depend on the choice of the lift or the choice of identification of $\C/\Lambda_\gamma$ with $\Sigma$, because any other such choices merely result in a translation of the lift $\widetilde{\alpha}$. It is straightforward to verify that the $G$-diffeomorphism
\[
\gamma(\widetilde{\alpha}(1)-\widetilde{\alpha}(0)): P \to P
\]
restricts to a $G$-diffeomorphism $P_{\alpha(0)} \to P_{\alpha(1)}$. Furthermore, if $\alpha$ is a contractible loop in $\Sigma$, then $\widetilde{\alpha}$ is a contractible loop in $\C$ since the homotopy lifts along $\C \twoheadrightarrow \Sigma$. In particular, $\widetilde{\alpha}(1)=\widetilde{\alpha}(0)$ for a contractible loop $\alpha$ and the associated connection is therefore flat. It is straightforward to show that the concatenation of paths is preserved.

Conversely, suppose that we are given a diffeomorphism $\eta: \Sigma \cong \C/\Lambda$ and a flat connection $\nu$ on $\pi:P \to \Sigma$. We define the associated frame $\gamma$ as follows. Let $(z,p) \in \C \times P$. The composite 
\[
\C \twoheadrightarrow \C/\Lambda \xrightarrow{ +\eta^{-1}(\pi(p))} \C/\Lambda \xrightarrow{\eta} \Sigma
\]
is a local diffeomorphism sending $0 \mapsto \pi(p)$, where the map in the middle is translation by $\eta^{-1}(\pi(p)) \in \C/\Lambda$. Let $\alpha$ be a path in $\C$ from $0$ to $z$. The image of $\alpha$ under the composite map above is a path in $\Sigma$ beginning at $\pi(p)$, and the connection determines a lift $\widetilde{\alpha}$ to $P$ beginning at $p$. We set $\gamma(z)(p) := \widetilde{\alpha}(1)$. This is well defined since, if $\alpha'$ is another path in $\C$ from $0$ to $z$, then the concatenation of $\alpha'$ with the path obtained by \lq going backward' along $\alpha$ is a contractible loop in $\C$. This defines a contractible loop in $\Sigma$ whose lift to $P$ by means of the flat connection $\nu$ results in a trivial parallel transport operation. It is straightforward to check that these constructions are inverse to each other. 
\end{proof}

\begin{remark}
Let $\gamma$ be a frame in $P$, let $\lambda \in \C^\times$ and write $m_\lambda: \C \to \C$ for scalar multiplication by $\lambda$. Then
\[
\C \xrightarrow{m_\lambda} \C \xrightarrow{\gamma} \Aut(P)
\]
is also a frame in $P$, since scaling by $\lambda$ preserves the image of $\gamma$, and therefore also preserves the transitivity of the $\C$-action on $\Sigma$. Furthermore, if $g: P' \to P$ is a morphism in $\L^2_G$, then
\[
\C \xrightarrow{\gamma} \Aut(P) \xrightarrow{g^{-1}(-) g} \Aut(P')
\]
is a frame in $P'$, since conjugation by the diffeomorphism $g$ preserves transitivity of the $\C$-action on $\Sigma$. 
\end{remark}

The following proposition is a straightforward extension of Proposition \ref{bijecto} and we omit the proof.

\begin{proposition}\label{actionn}
Let $\lambda \in \C^\times$ and let
\[
\begin{tikzcd}
P' \ar[r,"g"] \ar[d] & P \ar[d] \\
\Sigma' \ar[r,"\tilde{g}"] & \Sigma
\end{tikzcd}
\]
be a morphism in $\L^2_G$. Suppose that $\gamma$ is the frame in $P \to \Sigma$ corresponding to the triple $(\Lambda,\eta,\nu)$. Then 
\[
\C \xrightarrow{m_\lambda} \C \xrightarrow{\gamma} \Aut(P) \xrightarrow{g^{-1}(-) g} \Aut(P')
\]
is the frame in $P' \to \Sigma'$ corresponding to the triple $(\frac{1}{\lambda} \Lambda, \eta^{\lambda,g}, g^*\nu)$, where $g^*\nu$ is the connection defined by pulling back $\nu$ along the $G$-diffeomorphism $g: P' \to P$, and $ \eta^{\lambda,g}$ is the diffeomorphism defined by the commutative diagram
\[
\begin{tikzcd}
\C/\frac{1}{\lambda}\Lambda_{\gamma} \ar[r,"\tilde{m}_\lambda"] \ar[d,"\eta^{\lambda,g}"] & \C/\Lambda_\gamma \ar[d,"\eta"] \\
\Sigma' \ar[r,"\tilde{g}"] & \Sigma.
\end{tikzcd}
\]
All arrows in this diagram are diffeomorphisms and $\tilde{m}_\lambda$ is the isomorphism of elliptic curves induced by $m_\lambda$. Translating $\eta$ by $[z]$ corresponds to translating $\eta^{\lambda,g}$ by $[\frac{1}{\lambda}z]$.
\end{proposition}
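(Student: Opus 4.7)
The plan is to extend the correspondence established in the proof of Proposition \ref{bijecto} equivariantly, unpacking how the scaling $m_\lambda$ and the conjugation-pullback $g^{-1}(-)g$ transform each of the three pieces of data $(\Lambda,\eta,\nu)$ separately. Write $\gamma' := g^{-1}(-)g \circ \gamma \circ m_\lambda : \C \to \Aut(P')$; this is a frame in $P'$ by the preceding remark. Then $\tilde{\gamma}'(z) = \tilde{g}^{-1} \circ \tilde{\gamma}(\lambda z) \circ \tilde{g}$, so the induced $\C$-action on $\Sigma'$ is conjugate to the $\C$-action on $\Sigma$ precomposed with $m_\lambda$.

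First I would identify the lattice $\Lambda_{\gamma'}$: since $z \in \C$ acts trivially on $\Sigma'$ through $\gamma'$ if and only if $\lambda z$ acts trivially on $\Sigma$ through $\gamma$ (conjugation by $\tilde{g}$ preserves triviality), the isotropy is exactly $\{z : \lambda z \in \Lambda\} = \tfrac{1}{\lambda}\Lambda$. Second, I would fix a basepoint $b' \in \Sigma'$ and take $b := \tilde{g}(b') \in \Sigma$ as the basepoint used to identify $\eta$; then unraveling definitions gives $\eta'([z]) = \tilde{\gamma}'(z)(b') = \tilde{g}^{-1}\bigl(\tilde{\gamma}(\lambda z)(b)\bigr) = \tilde{g}^{-1}(\eta([\lambda z]))$, which is precisely the diagram defining $\eta^{\lambda,g}$. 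The translation ambiguity statement follows because translation of $\eta$ by $[z] \in \C/\Lambda$ corresponds, under $\tilde{m}_\lambda^{-1}$, to translation by $[\tfrac{1}{\lambda}z] \in \C/\tfrac{1}{\lambda}\Lambda$.

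Third, I would verify the connection claim by retracing the parallel-transport construction in the proof of Proposition \ref{bijecto}. Given a path $\alpha'$ in $\Sigma'$ from $a'_0$ to $a'_1$, its image $\tilde{g} \circ \alpha'$ is a path in $\Sigma$, and a lift to $\C$ of $\tilde{g} \circ \alpha'$ along the quotient $\C \twoheadrightarrow \C/\Lambda \xrightarrow{\eta} \Sigma$ corresponds (via division by $\lambda$) to a lift of $\alpha'$ along $\C \twoheadrightarrow \C/\tfrac{1}{\lambda}\Lambda \xrightarrow{\eta^{\lambda,g}} \Sigma'$. The associated parallel transport for $\gamma'$ is by definition $\gamma'(\widetilde{\alpha'}(1) - \widetilde{\alpha'}(0)) = g^{-1} \circ \gamma(\lambda(\widetilde{\alpha'}(1) - \widetilde{\alpha'}(0))) \circ g$, and by construction $\lambda(\widetilde{\alpha'}(1) - \widetilde{\alpha'}(0))$ is exactly the increment of a lift of $\tilde{g} \circ \alpha'$, so the bracketed operator is the parallel transport for $\nu$ along $\tilde{g} \circ \alpha'$. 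Conjugation by $g$ is then precisely the definition of the pullback connection $g^*\nu$.

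The main obstacle, such as it is, lies in the bookkeeping of basepoints and lifts in the connection step: one must check that each compatible choice on the $\Sigma'$ side corresponds, after scaling by $\lambda$ and applying $\tilde{g}$, to a compatible choice on the $\Sigma$ side, so that the resulting parallel transport is well defined and matches. Once the basepoint normalisation $b = \tilde{g}(b')$ is fixed, however, all three verifications are routine diagram chases, which is why the author suggests omitting the proof.
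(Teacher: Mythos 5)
Your proposal is correct, and it follows exactly the route the paper intends: the paper omits the proof, remarking only that the statement is a straightforward extension of Proposition \ref{bijecto}, and your argument is precisely that extension, tracking how $m_\lambda$ and conjugation by $g$ transform the isotropy lattice, the identification $\eta$ (with the basepoint normalisation $b=\tilde{g}(b')$), and the parallel transport construction. The key verifications — that $\lambda\widetilde{\alpha'}$ is a lift of $\tilde{g}\circ\alpha'$ and that conjugating the $\nu$-transport by $g$ gives the $g^*\nu$-transport — are exactly the bookkeeping the paper leaves to the reader.
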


The next result suggests that we classify frames in $P$ by reference to some torus in $\Aut(P)$ that they live inside. This is the motivation for the abelian orbit category of a topological groupoid, to be defined in the next section.

\begin{proposition}\label{framestori}
If $\gamma$ is a frame in $P$, then the closure $\widebar{\gamma(\C)}$ of its image in $\Aut(P)$ is compact. In particular, there exists a torus $T \leq \Aut(P)$ such that $\gamma(\C)$ is contained in $T$.
\end{proposition}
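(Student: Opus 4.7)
The plan is to produce a Riemannian metric on $P$ preserved by both $G$ and $\gamma(\C)$ and then appeal to Myers--Steenrod to force $\overline{\gamma(\C)}$ to be compact.

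By Proposition \ref{bijecto}, the frame $\gamma$ corresponds to a triple $(\Lambda_\gamma, \eta_\gamma, \nu_\gamma)$ in which $\nu_\gamma$ is a flat connection on $P$. Fixing a base point in $P$, flatness lets me identify $P$ with the associated bundle $(\C \times G)/\Lambda_\gamma$ for the holonomy representation $\rho : \Lambda_\gamma \to G$, where $\Lambda_\gamma$ acts by $\lambda \cdot (z,g) = (z+\lambda, \rho(\lambda) g)$. Unwinding the construction of $\gamma$ in the proof of Proposition \ref{bijecto} (parallel transport along a straight line in $\C$ has constant $G$-coordinate) then shows that $\gamma(w) \in \Aut(P)$ is the descent of the map $(z,g) \mapsto (z+w,g)$.

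Next, I equip $\C$ with its Euclidean metric and $G$ with a bi-invariant Riemannian metric, which exists since $G$ is compact. The product metric on $\C \times G$ is invariant under the $\Lambda_\gamma$-action above (Euclidean is translation-invariant; left translation by $\rho(\lambda)$ preserves bi-invariance on $G$), so it descends to a Riemannian metric $g_P$ on $P$. The same product metric is also invariant under the $G$-right action and under $\C$-translation of the first factor, so after descent $G$ acts on $P$ by isometries and $\gamma(\C)$ is a subgroup of $\mathrm{Isom}(P,g_P)$. Since $P$ is compact, Myers--Steenrod yields that $\mathrm{Isom}(P,g_P)$ is a compact Lie group in the compact-open topology; its closed subgroup $K$ of $G$-equivariant isometries is therefore compact and sits inside $\Aut(P)$. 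Because $\gamma(\C) \subseteq K$, the closure $T := \overline{\gamma(\C)}$ taken inside $\Aut(P)$ lies in $K$, hence is compact. Being the closure of a connected abelian subgroup, $T$ is a compact, connected, abelian Lie subgroup of $\Aut(P)$, i.e.\ a torus containing $\gamma(\C)$.

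The main obstacle is topological bookkeeping: one must confirm that the Lie-group topology on $\mathrm{Isom}(P,g_P)$ supplied by Myers--Steenrod agrees with the subspace topology inherited from the compact-open topology on $\Diff(P)$, so that compactness of $K$ really translates into compactness of $T$ as a subspace of $\Aut(P)$. This follows from the standard form of Myers--Steenrod together with the continuity of the inclusion $\mathrm{Isom}(P,g_P) \hookrightarrow \Diff(P)$, and should not present real difficulty.
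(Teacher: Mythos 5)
Your argument is correct, but it proves the proposition by a genuinely different route from the paper. You build the holonomy model $P \cong (\C \times G)/\Lambda_\gamma$ from the flat connection of Proposition \ref{bijecto}, equip it with the descent of the product of the Euclidean metric and a bi-invariant metric on $G$ (using compactness of $G$), observe that $\gamma(\C)$ and the $G$-action act by isometries, and invoke Myers--Steenrod on the compact manifold $P$ to get a compact Lie group of $G$-equivariant isometries inside $\Aut(P)$ containing $\gamma(\C)$; compactness of $\overline{\gamma(\C)}$ and the torus statement then both fall out at once, since the closure sits inside a compact Lie group. The paper instead uses the transitive action of $\overline{\gamma(\C)}$ on $\Sigma$ to exhibit a fiber bundle $\overline{\gamma(\Lambda_\gamma)} \to \overline{\gamma(\C)} \to \Sigma$, reducing the claim to compactness of $\overline{\gamma(\Lambda_\gamma)}$, which it locates inside $\Gau(P)$ and inside the stabiliser of the flat connection $\nu_\gamma$ under the gauge action, citing (a generalisation of) Narasimhan--Ramadas for compactness of that stabiliser. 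What your approach buys is self-containedness -- no external compactness-of-stabiliser input -- plus the stronger conclusion that the full group of $G$-equivariant isometries of $(P,g_P)$ is a compact subgroup of $\Aut(P)$, and the torus claim comes for free; the cost is the bookkeeping you flag (checking that the flat-connection isomorphism really intertwines $\gamma$ with translation in the first factor, and that the Myers--Steenrod topology is the compact-open one), all of which is standard. The paper's argument is shorter given the citation, avoids choosing a metric or a trivialising model, and stays within the gauge-theoretic framework used elsewhere in the paper; ultimately both proofs rest on the same mechanism, since the Narasimhan--Ramadas compactness comes down to identifying the stabiliser of a flat connection with a closed subgroup of the compact group $G$, which is essentially the holonomy picture you made explicit.
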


\begin{proof}
The action of $\widebar{\gamma(\C)}$ on $\Sigma$ is transitive and the isotropy group of any point $b \in \Sigma$ is equal to the closure of $\gamma(\Lambda)$ in $\Aut(P)$. The action of $\widebar{\gamma(\C)}$ on $b$ therefore yields a fiber bundle $\widebar{\gamma(\Lambda)} \to \widebar{\gamma(\C)} \to \Sigma$. Since $\Sigma$ is compact, the result follows if we can show that $\widebar{\gamma(\Lambda)}$ is compact. The action of $\widebar{\gamma(\Lambda)}$ on $\Sigma$ is trivial, and therefore $\widebar{\gamma(\Lambda)}\leq \Gau(P)$. Furthermore, the conjugation action of $\widebar{\gamma(\Lambda)}$ on $\gamma(\C)$ is trivial, so that $\widebar{\gamma(\Lambda)}$ is contained in the isotropy subgroup of the $\Gau(P)$-action on (the flat connection associated to) $\gamma$, which is known to be compact (see for example Prop. 2.4 in \cite{Nara}, for an argument which generalises easily to this context). Since $\widebar{\gamma(\C)}$ is closed, it is also compact.
\end{proof}

\section{The abelian orbit category of a topological groupoid}\label{s3}

In this section we define the abelian orbit category $\Orb$ associated to a topological groupoid $\G$, which was originally introduced in \cite{Rezktalk}. It is a generalisation of the abelian orbit category of a topological group in the sense that, if $\mathbf{B}G$ is the topological groupoid with just one object and automorphism group $G$, then $\mathrm{Orb}^{\ab}_{\mathbf{B}G}$ is the usual orbit category $\mathrm{Orb}^{\ab}_G$ with compact abelian isotropy. We will generally use $P$ for an object of $\G$ and $\Aut(P)$ for its automorphism group (since in our main application, the objects are principal bundles).

\subsection{Definition}

\begin{definition}
The abelian orbit category $\Orb$ of a topological groupoid $\G$ is the category whose objects are pairs $(P,T)$, where $P$ is an object in $\G$ and $T$ is a compact abelian subgroup of $\Aut(P)$. A morphism $(P,T) \to (P',T')$ is a natural transformation of functors
\[
\Map_{\G}(P,-)/T \to \Map_{\G}(P',-)/T'
\]
from $\G$ into $\Top$. The quotients are topological quotients by the continuous $T$-action given by precomposing with automorphisms of $P$.
\end{definition}

\begin{example}
If $\G = \mathbf{B}G$, then the orbit category $\mathrm{Orb}^{\ab}_{\G}$ is equivalent to the usual orbit category $\mathrm{Orb}^{\ab}_G$ of $G$ with compact abelian isotropy. The equivalence sends an object $(\pt,T)$ to $G/T$ and a natural transformation of functors
\[
\Map_{\mathbf{B}G}(\pt,-)/T \to \Map_{\mathbf{B}G}(\pt,-)/T'
\]
to its evaluation on the single object of $\mathbf{B} G$, which is a $G$-equivariant continuous map $G/T \to G/T'$. 
\end{example}

The following proposition is a straightforward application of the Yoneda lemma. It provides us with concrete representatives of morphisms in $\Orb$.

\begin{proposition}\label{yoneda}
Let $(P,T)$ and $(P',T')$ be objects in $\Orb$. There is a bijection between the set of morphisms from $(P,T)$ to $(P',T')$ and the set $(\Map_\G(P',P)/T')^T$ of $T'$-orbits that are fixed by $T$ (i.e. orbits that are represented by some $g: P' \to P$ satisfying $g^{-1}T g \leq T'$). 

The bijection sends a natural transformation $\phi: (P,T) \to (P',T')$ to $\phi_P([\id_P]_T)$, where $[\id_P]_T$ denotes the $T$-orbit of the identity morphism $\id_P \in \Aut(P)$. The inverse map sends a $T'$-orbit $[g]_{T'}$ represented by $g: P \to P'$ to the natural transformation induced by precomposition with $g$.
\end{proposition}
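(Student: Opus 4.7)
The plan is to deduce the result directly from the Yoneda lemma, after first recognising $\Map_\G(P,-)/T$ as a coequaliser. More precisely, in $\Fun(\G,\Top)$ the quotient $\Map_\G(P,-)/T$ is the coequaliser of the pair $T \times \Map_\G(P,-) \rightrightarrows \Map_\G(P,-)$ given by the precomposition action and the projection. Consequently, a morphism $\phi: \Map_\G(P,-)/T \to \Map_\G(P',-)/T'$ in $\Fun(\G,\Top)$ corresponds bijectively, via precomposition with the quotient map, to a natural transformation $\tilde\phi: \Map_\G(P,-) \to \Map_\G(P',-)/T'$ satisfying the $T$-invariance condition $\tilde\phi_Q(f \circ t) = \tilde\phi_Q(f)$ for every $f: P \to Q$ and every $t \in T$.

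Next I would invoke the (enriched) Yoneda lemma for the representable $\Map_\G(P,-)$ to identify $\tilde\phi$ with the element $\tilde\phi_P(\id_P) \in \Map_\G(P',P)/T'$, which we write as an orbit $[g]_{T'}$. Translating the $T$-invariance condition through this identification is the key step: by naturality of $\tilde\phi$ applied to the morphism $t: P \to P$, invariance at $(Q,f) = (P,\id_P)$ gives $t_*[g]_{T'} = [g]_{T'}$ for every $t \in T$, where $t_*$ denotes postcomposition by $t$. This reads $[t g]_{T'} = [g]_{T'}$, equivalently $g^{-1} t g \in T'$ for every $t \in T$, equivalently $g^{-1} T g \leq T'$. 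In other words, $[g]_{T'}$ lies in the fixed-point subspace $(\Map_\G(P',P)/T')^T$, and conversely every $T$-invariance condition at arbitrary $(Q,f)$ follows from this one at $(P,\id_P)$ by re-applying naturality, since $\tilde\phi_Q(f \circ t) = f_* t_* [g]_{T'} = f_*[g]_{T'} = \tilde\phi_Q(f)$.

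Finally I would verify that the inverse assignment, sending a $T$-fixed orbit $[g]_{T'}$ to the natural transformation $\phi_Q([f]_T) := [f \circ g]_{T'}$, is well defined on $T$-orbits (the identity $[f t g]_{T'} = [f g]_{T'}$ used here is precisely the condition $g^{-1} T g \leq T'$) and inverse to the Yoneda map, by evaluating at $Q = P$ and $f = \id_P$ to recover $[g]_{T'}$. The main obstacle throughout is purely bookkeeping: keeping straight which side of $g$ is acted upon, distinguishing the precomposition $T$-action on $\Map_\G(P,-)$ from the postcomposition $T$-action on the target $\Map_\G(P',P)/T'$, and separating the $T$- and $T'$-quotients. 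Continuity of each $\phi_Q$ in the compact-open topology is automatic from continuity of composition in $\G$, so no further analytic input is required.
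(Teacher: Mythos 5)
Your proposal is correct and follows essentially the same route as the paper: both restrict the Yoneda bijection $\Nat(\Map_\G(P,-),\Map_\G(P',-)/T') \cong \Map_\G(P',P)/T'$ to the transformations that factor through (equivalently, are invariant under) the $T$-quotient, and both identify that invariance, via naturality at $\id_P$, with the condition $g^{-1}Tg \leq T'$, checking well-definedness of the precomposition-by-$g$ inverse in the same way. Your coequaliser framing of the quotient is just a repackaging of the paper's ``factors through the quotient by $T$'' step, not a genuinely different argument.
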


\begin{proof}
Consider the composite map
\begin{align*}
\Nat(\Map_\G(P,-)/T, \Map_\G(P',-)/T') &\hookrightarrow  \Nat(\Map_\G(P,-), \Map_\G(P',-)/T') \\
&\cong \Map_{\G}(P',P)/T'
\end{align*}
where the inclusion is given by pullback along the $T$-quotient and the bijection $\phi \mapsto \phi_P(\id_P)$ is the bijection of the Yoneda lemma. The image of the first map is the subset of transformations $\Map(P,-) \to \Map(P',-)/T'$ that factor through the quotient by $T$. It will suffice to show that the image of the composite map is the set of $T$-fixed points of $\Map_\G(P',P)/T'$. Suppose that a natural transformation $\phi: \Map_\G(P,-) \to \Map_\G(P',-)/T'$ factors through the quotient by $T$. Then the component of $\phi_P$ of $\phi$ at $P$ factorises 
\[
\phi_P: \Map_{\G}(P,P) \to \Map_{\G}(P,P)/T \to \Map_{\G}(P',P)/T'
\]
through the quotient. In particular, $\phi_P(t) = \phi_P(\id_P)$ for all $t \in T \leq \Aut(P) =  \Map_{\G}(P,P)$, and so by naturality of $\phi$ we have $t \cdot \phi_P(\id_P) = \phi_P(t) =\phi_P(\id_P)$ for all $t \in T$. Therefore $\phi_P(\id_P)$ is fixed by $T$.

Conversely, let $[g]_{T'} \in  \Map_{\G}(P',P)/T'$ be a $T'$-orbit represented by $g:P' \to P$. Then $[g]_{T'}$ is fixed by $T$ if and only if
\[
[t\circ g]_{T'} = [g]_{T'} \quad \text{for all}\quad t \in T.
\]
This is true if and only if, for each $t \in T$, there exists $t' \in T'$ such that $t \circ g = g \circ t'$, which is true if and only if
\[
g^{-1} T g \leq T'.
\]
The natural transformation $\phi_g: \Map_{\G}(P,-) \to \Map_{\G}(P',-)/T'$ which corresponds to $[g]_{T'}$ under the bijection
\[
\Nat(\Map_\G(P,-), \Map_\G(P',-)/T')\cong \Map_{\G}(P',P)/T'
\]
above is induced by precomposition by $g$. Assuming that $[g]_{T'}$ is fixed by $T$, we want to show that $\phi_g$ factors through the quotient by $T$. Let $P''$ be an object in $\G$ and let $f$ and $f'$ be in the same $T$-orbit of the $T$-space $\Map_\G(P,P'')$. Then there exists $t \in T \leq \Aut(P)$ such that $f' = f \circ t$. Therefore 
\[
\phi_{g,P''}(f') = [f' \circ g]_{T'} = [f\circ t \circ g]_{T'} =  [f]\circ [t \circ g]_{T'} =  [f]\circ [ g]_{T'} = [f \circ g]_{T'} =\phi_{g,P''}(f)
\]
which means that $\phi_g$ factors through the $T$-quotient, so we are done.
\end{proof}

\begin{remark}
We may use this result to enrich the category $\Orb$ over $\Top$ by transferring the natural topology on $(\Map_\G(P',P)/T')^T$ to the set of morphisms $\{(P,T) \to (P',T')\}$, since the composition of morphisms in $\Orb$ is induced by composition in $\G$.  
\end{remark}

\begin{notation}
Write $W(\Aut(P)/T)$ for the automorphism group of an object $(P,T)$ in $\Orb$. This notation is supposed to suggest the idea of a Weyl group, as well as to clearly distinguish the automorphism group of $(P,T)$ in $\Orb$ from the automorphism group $\Aut(P)$ of $P$ in $\G$.
\end{notation}

\begin{remark}
By Proposition \ref{yoneda}, $W(\Aut(P)/T)$ is isomorphic to $(\Aut(P)/T)^T$, which is equal to the quotient group $N(P,T) /T$ of the normaliser of $T \leq \Aut(P)$ by $T$. In other words, $W(\Aut(P)/T)$ is indeed isomorphic to the Weyl group of $T$ in $\Aut(P)$. 
\end{remark}

The next corollary follows immediately from Proposition \ref{yoneda}.

\begin{corollary}
Let $\G$ be a topological groupoid. A choice of representative object for each isomorphism class of objects in $\G$ determines an equivalence of categories
\[
\Orb \simeq \coprod_{[P] \in \pi_0 \G} \mathrm{Orb}^{\ab}_{\Aut(P)} 
\]
where $\mathrm{Orb}^{\ab}_{\Aut(P)} $ is the orbit category of $\Aut(P)$ with compact abelian isotropy. 
\end{corollary}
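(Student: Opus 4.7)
The plan is to construct an explicit functor $F: \coprod_{[P] \in \pi_0 \G} \mathrm{Orb}^{\ab}_{\Aut(P)} \to \Orb$ and verify that it is essentially surjective and fully faithful, with essentially all of the work already packaged into Proposition \ref{yoneda}. On the component indexed by a chosen representative $P$, define $F$ on objects by $\Aut(P)/T \mapsto (P,T)$. For morphisms, use the fact that a morphism $\Aut(P)/T \to \Aut(P)/T'$ in $\mathrm{Orb}^{\ab}_{\Aut(P)}$ is (by the usual orbit-category Yoneda) a $T'$-orbit $[g]_{T'}$ with $g^{-1}Tg \leq T'$, and send it to the morphism $(P,T) \to (P,T')$ that corresponds to the same datum under the bijection of Proposition \ref{yoneda}. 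Functoriality is immediate because composition on both sides is induced by composition in $\G$ (equivalently, in $\Aut(P)$).

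For essential surjectivity, I would take an arbitrary object $(Q,S)$ of $\Orb$, let $P$ be the chosen representative of the isomorphism class $[Q] \in \pi_0\G$, and pick any isomorphism $h: Q \to P$ in $\G$. Setting $T := h S h^{-1} \leq \Aut(P)$ gives a compact abelian subgroup, and the $T'$-orbit (with $T' = T$) of $h$ is $T$-fixed because $h^{-1} T h = S$; so Proposition \ref{yoneda} produces a morphism $(Q,S) \to (P,T)$ in $\Orb$. Running the same argument with $h^{-1}$ yields its inverse, hence $(Q,S) \cong F(\Aut(P)/T)$.

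For fully faithfulness, fix chosen representatives $P$ and $P'$ and consider $\Orb((P,T),(P',T'))$. By Proposition \ref{yoneda} this set is $(\Map_\G(P',P)/T')^T$. If $P \neq P'$ as chosen representatives, then $[P] \neq [P']$ in $\pi_0 \G$ so $\Map_\G(P',P) = \emptyset$, matching the fact that there are no morphisms between different components of the coproduct. If $P = P'$, then $\Map_\G(P,P) = \Aut(P)$ and the set of $T$-fixed $T'$-orbits is precisely the hom set in $\mathrm{Orb}^{\ab}_{\Aut(P)}$ (via the same Yoneda identification applied to the one-object groupoid $\mathbf{B}\Aut(P)$). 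The bijection is exactly what $F$ induces on morphism sets, so $F$ is fully faithful.

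There is no real obstacle here; the only things to keep track of are bookkeeping (that an isomorphism $h: Q \to P$ in $\G$ conjugates the chosen compact abelian subgroup to a compact abelian subgroup of $\Aut(P)$, and that this conjugation defines an isomorphism in $\Orb$) and checking that the identification respects composition. If one wants the equivalence to be $\Top$-enriched, one further notes that the bijection of Proposition \ref{yoneda} is a homeomorphism with respect to the topologies given by the remark following that proposition, so $F$ becomes a topological equivalence automatically.
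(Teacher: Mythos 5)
Your proposal is correct and is exactly the argument the paper leaves implicit: the paper states that the corollary ``follows immediately from Proposition \ref{yoneda}'', and your construction of the componentwise functor, the conjugation argument for essential surjectivity, and the identification of hom-sets via $(\Map_\G(P',P)/T')^T$ are precisely the unpacking of that claim (keeping in mind that a morphism $(P,T)\to(P',T')$ is represented by a map $P'\to P$, so the roles of $h$ and $h^{-1}$ swap, which your argument accommodates). No further comment is needed.
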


\begin{definition}\label{maximal}
An object $(P,T)$ in $\Orb$ is {\em maximal} if the identity component of $T$ is maximal among tori in $\Aut(P)$. 
\end{definition}

\begin{lemma}
An object $(P,T)$ in $\Orb$ is maximal if and only if $\W(\Aut(P)/T)$ is discrete. 
\end{lemma}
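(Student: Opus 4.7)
The plan is to translate discreteness of $\W(\Aut(P)/T)$ into a condition on the centraliser of $T$ in $\Aut(P)$. By the remark preceding the lemma, $\W(\Aut(P)/T) \cong N(T)/T$ where $N(T)$ is the normaliser of $T$ in $\Aut(P)$, so the quotient is discrete if and only if $N(T)_0 \subseteq T$, equivalently (since $N(T)_0$ is connected and contains $T_0$) if and only if $N(T)_0 = T_0$.

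The first step is to replace $N(T)$ by the centraliser $Z(T)$. The conjugation action of $N(T)$ on $T$ is a continuous homomorphism $N(T) \to \Aut(T)$, and $\Aut(T)$ is discrete: writing $T \cong \bT^n \times F$ for some finite abelian $F$, a continuous automorphism preserves the identity component and is determined by an element of $\GL_n(\Z)$ together with an automorphism of the finite quotient, both taking values in discrete groups. Hence the connected subgroup $N(T)_0$ maps to the identity of $\Aut(T)$, giving $N(T)_0 \subseteq Z(T)$; the reverse containment of identity components is automatic, so $N(T)_0 = Z(T)_0$. The lemma reduces to the claim that $Z(T)_0 = T_0$ if and only if $T_0$ is a maximal torus in $\Aut(P)$.

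For the forward direction, if $T_0$ is maximal then I would invoke the standard structural fact that a maximal torus equals the identity component of its own centraliser, giving $Z(T_0)_0 = T_0$; since $T_0 \subseteq T$ yields $Z(T) \subseteq Z(T_0)$, this forces $Z(T)_0 \subseteq Z(T_0)_0 = T_0$. For the converse, suppose $T_0$ is not maximal and fix a torus $T' \supsetneq T_0$ in $\Aut(P)$; the task is to exhibit a connected subgroup of $Z(T)$ strictly larger than $T_0$. The natural candidate is the subtorus of $T'$ pointwise fixed by the conjugation action of the finite quotient $T/T_0$ on $T'$ (once $T$ is seen to normalise $T'$): this fixed subtorus contains $T_0$, and the content is to show it strictly contains $T_0$, using the structure of the ambient Lie group $\Aut(P)$ to argue that the fixed subtorus cannot drop to $T_0$ when $T_0$ itself sits strictly inside the torus $T'$. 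This second half is the main obstacle, since the interaction of the non-identity components of $T$ with tori containing $T_0$ in the typically infinite-dimensional group $\Aut(P)$ requires appeal to the Lie-theoretic structure available in the setting (for instance, the compact Lie structure of $Z_{\Aut(P)}(T_0)$).
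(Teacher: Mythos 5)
Your proposal does not close the lemma. The decisive gap is the converse direction: you reduce ``$W(\Aut(P)/T)$ discrete $\Rightarrow (P,T)$ maximal'' to exhibiting a connected subgroup of the centraliser $Z(T)$ strictly larger than $T_0$ whenever $T_0$ is not maximal, propose the subtorus of some torus $T'\supsetneq T_0$ fixed by the conjugation action of $T/T_0$, and then concede that this is ``the main obstacle''. That obstacle is real: there is no reason $T$ normalises a given torus $T'$ containing $T_0$, and even when it does the fixed subtorus can collapse to $T_0$. Worse, at the level of generality at which you are arguing the implication is simply false: for the Klein four-group $V\leq SO(3)$ one has $N(V)/V\cong S_3$ discrete while $V_0=\{1\}$ is not a maximal torus, and for a coordinate circle $T'$ the subgroup of $T'$ fixed under conjugation by $V$ is finite. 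So no argument that only uses formal compact-group theory in an ambient group (even granting compact Lie structure on $Z_{\Aut(P)}(T_0)$, which in the $SO(3)$ example is all of $SO(3)$) can work; one must use specific features of $\Aut(P)$ as a group of bundle automorphisms, e.g.\ that the centraliser of a compact abelian subgroup contains the flows of invariant vector fields. (Curiously, this is the direction the paper dismisses as clear, while it spends its effort on the other one.)

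The forward direction also rests on an unjustified citation: ``a maximal torus equals the identity component of its own centraliser'' is a theorem about compact connected Lie groups, whereas here the ambient group $\Aut(P)$ is an infinite-dimensional topological group, so $Z_{\Aut(P)}(T_0)_0=T_0$ cannot simply be quoted --- it is essentially the content to be proved. The paper instead argues directly: if $W(\Aut(P)/T)$ is not discrete it admits a nontrivial one-parameter subgroup, and the closure of the preimage of this subgroup in $N(P,T)$ is a compact abelian subgroup of $\Aut(P)$ whose torus strictly contains $T_0$, contradicting maximality. Likewise your equivalence ``$W$ discrete $\iff N(T)_0\subseteq T$'' is used in the direction $N(T)_0\subseteq T\Rightarrow W$ discrete, which fails for general topological groups (a totally disconnected quotient need not be discrete) and again needs input from the fact that these groups sit inside $\Diff(P)$. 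The reduction from the normaliser to the centraliser via discreteness of $\Aut(T)$ is correct and a nice touch, but as written the proposal completes neither implication.
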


\begin{proof}
Suppose that $T$ is a maximal torus in $\Aut(P)$, and consider the exact sequence
\[
1 \longrightarrow T \longrightarrow N(P,T) \longrightarrow W(\Aut(P)/T) \longrightarrow 1.
\]
Assume that $W(\Aut(P)/T)$ is not discrete. Then there exists a nontrivial one-parameter subgroup $\alpha: \R \to W(\Aut(P)/T)$. The closure of the preimage of $\alpha(\R)$ in $N(P,T)$ is then a compact abelian subgroup of $\Aut(P)$ whose maximal torus contains $T$ as a proper subgroup, which contradicts the maximality of $T$. The other direction is clear.
\end{proof}

\subsection{Examples}

We compute $W(\Aut(P)/T)$ for some examples of objects $(P,T)$ in $\mathrm{Orb}^{\ab}_{\L^2_G}$ for a compact Lie group $G$.

\begin{example}\label{trivialexample}
Let $A$ be a maximal torus in $G$. If $P$ is the trivial bundle $\bT^2 \times G$, then 
\[
T := A \times \bT^2 \leq \Map(\bT^2,G) \rtimes \Diff(\bT^2) = \Aut(P)
\]
is a maximal torus in $\Aut(P)$, where $\bT^2$ is identified with translations in $\Diff(\bT^2)$ and $A$ is identified with the group of constant maps $\bT^2 \to \pt \to G$ taking values in $A$. In this case, the automorphism group $W(\Aut(P)/T)$ of $(P,T)$ is the group 
\[
\Hom(\bT^2,A) \rtimes (W(G/A) \times \GL_2(\Z)) 
\]
sometimes called the elliptic Weyl group of $G$. Here 
\[
W(G/A) \hookrightarrow \Map(\bT^2,G)
\]
is identified with the subgroup of constant maps taking values in the (finite) Weyl group of $A \leq G$,
\[
\Hom(\bT^2,A) \leq \Map(\bT^2,A) \hookrightarrow \Map(\bT^2,G)
\]
is the subgroup of group homomorphisms with values in $A$ and 
\[
\GL_2(\Z) \leq \Diff(\bT^2)
\]
is the subgroup of diffeomorphisms of $\bT^2$ that are induced by linear automorphisms of $\R^2$. The action of $W(G/A)$ on $\Hom(\bT^2,A)$ is given by postcomposition with the Weyl action and the action of $\GL_2(\Z)$ on $\Hom(\bT^2,A)$ is precomposition by diffeomorphisms of $\bT^2$.
\end{example}

\begin{example}
Let $G$ be a finite group, let $g,h$ be commuting elements in $G$ and let $P_{g,h}$ be the principal $G$-bundle over $\bT^2$ defined by the homomorphism $\Z^2 \to G$ sending $(1,0) \mapsto g$ and $(0,1) \mapsto h$. Write $C_G(g,h)$ for the centraliser of the pair $g,h$ in $G$. The subspace 
\[
\R^2 \times C_G(g,h) / \langle (-1,0,g)(0,-1,h)\rangle  \subset \R^2 \times G / \langle (-1,0,g)(0,-1,h)\rangle =: P_{g,h},
\]
has a group structure induced by that of $\R^2$ and $G$ and acts smoothly and faithfully on $P_{g,h}$. The automorphism group $\Aut(P_{g,h})$ is homotopy equivalent to
\[
(\R^2 \times C_G(g,h) / \langle (-1,0,g)(0,-1,h)\rangle) \rtimes \Gamma(g,h)  
\]
where $\Gamma(g,h)$ is the subgroup 
\[
\Gamma(g,h) = \left\{ \begin{pmatrix} a & b \\ c & d \end{pmatrix} \in \GL_2(\Z) \, \bigg| \, (g^ah^c ,g^bh^d) = (g,h) \right\} 
\]
of linear automorphisms in $\Diff(\bT^2)$. The semidirect product structure is induced by the multiplication action of $\Gamma(g,h) $ on $\R^2$ and the trivial action on $C_G(g,h)$. The subgroup 
\[
T_{g,h} := \R^2 / |g|\Z \times |h|\Z \quad \leq \quad \Aut(P_{g,h})
\]
is a maximal torus, and $(P_{g,h},T_{g,h})$ is a maximal object in $\mathrm{Orb}^{\ab}_{\L^2_G}$ with automorphism group 
\[
W(\Aut(P_{g,h})/T_{g,h}) = \Gamma(g,h) \times C_G(g,h).
\]
If $N$ is the order of $G$, then $\Gamma(g,h)$ contains the kernel of the projection $\GL_2(\Z) \twoheadrightarrow \GL_2(\Z/N)$, which is the principal congruence subgroup $\Gamma(N)$. Thus, $\Gamma(g,h)$ is also a congruence subgroup of $\GL_2(\Z)$. 
\end{example}

\begin{example}
Let $G = \Z/N$. We have  
\[
\Gamma(1,0) = \left\{   A \in \GL_2(\Z) \,\bigg|\, A = \begin{pmatrix}
1 & * \\ 
0 & \det A
\end{pmatrix} \mod N\right\}
\]
and
\[
\Gamma(0,1) = \left\{   A \in \GL_2(\Z) \,\bigg| \, A = \begin{pmatrix}
\det A & 0 \\ 
* & 1
\end{pmatrix} \mod N\right\}.
\]
Note that $\Gamma(1,0) \cap \SL_2(\Z)$ is the congruence subgroup of $\SL_2(\Z)$ usually denoted by $\Gamma_1(N)$.
\end{example}

\begin{example}
Let $G = \Z/N \times \Z/N$. A matrix $A \in \GL_2(\Z)$ fixes a pair 
\[
((m_1,m_2),(n_1,n_2)) \in (\Z/N \times \Z/N) \times (\Z/N \times \Z/N)
\]
if and only if $A$ fixes both $(m_1,n_1)$ and $(m_2,n_2)$ under matrix multiplication. So, by the previous example, we have
\[
\Gamma((1,0),(0,1)) = \left\{   A \in \GL_2(\Z) \,\middle|\, A = \begin{pmatrix}
1 & 0 \\ 
0 & 1
\end{pmatrix} \mod N\right\} = \Gamma(N).
\]
\end{example}

\subsection{Classifying frames}\label{actING}

It follows from Proposition \ref{framestori} that, for any frame $\gamma: \C \to \Aut(P)$ in an object $P$ of $\L^2_G$, there exists a compact abelian subgroup $T \leq \Aut(P)$ such that the image of $\gamma$ is contained in $T$. In this section we use this fact to classify frames over the abelian orbit category of $\L^2_G$.

It is well known that a compact abelian topological group has a smooth structure, unique up to diffeomorphism, and that any continuous homomorphism of Lie groups is smooth. We will use these facts in the sequel without specific reference to them.

\begin{remark}
If $T$ is a compact abelian group, then the space $\Hom(\C,T)$ of homomorphisms of Lie groups is a complex vector space. Addition is defined pointwise using the abelian group structure on $T$ and scaling is defined by precomposition with the conjugate scalar action on $\C$, so that 
\[
(\lambda \cdot \gamma)(z) := \gamma(\overline{\lambda}z) \quad \text{for all} \ \lambda \in \C^\times, z \in\C.
\] 
The complex vector space $\Hom(\C,T)$ may be identified with the space $\Hom_\R(\C,\Lie(T))$ of $\R$-linear maps by differentiating at zero, and thence with the complex Lie algebra $\Lie(T) \otimes_\R \C$ of $T$. 
\end{remark}

\begin{definition}
An $n$-dimensional $\C^\times$-equivariant complex manifold is a $\C^\times$-equivariant topological space $M$ equipped with a maximal atlas of charts $\C^n \supset U \xrightarrow{\phi_i} M$ such that each chart is $\C^\times$-equivariant (with the usual scalar action on $\C^n$) and transition functions are biholomorphic. A morphism $M \to M'$ of $\C^\times$-equivariant complex manifolds is a $\C^\times$-equivariant holomorphic map. Let $\CxMfd^{\C^\times}$ denote the category of $\C^\times$-equivariant complex manifolds with these morphisms. 
\end{definition}

\begin{definition}
Let $\G$ be a topological groupoid. Define the functor
\[
\Hom(\C,-): \Orb \longrightarrow \CxMfd^{\C^\times}
\]
which sends an object $(P,T)$ to the complex vector space $\Hom(\C,T)$ and a morphism $\phi:(P,T) \to (P',T')$ to the $\C$-linear map $\Hom(\C,T) \to \Hom(\C,T')$ given by postcomposition with the homomorphism $T \to T'$ defined by $t \mapsto g^{-1}t g$, where $g:P' \to P$ is any morphism in $\G$ that represents $\phi$. 
\end{definition}

\begin{notation}
Let $\G$ be a topological groupoid. To simplify notation, we will write $\phi$ not only for a morphism $\phi:(P,T) \to (P',T')$ in $\Orb$ but also for the induced homomorphism $\phi: T \to T'$ and for the induced complex linear map $\phi: \Hom(\C,T) \to \Hom(\C,T')$. We will often abbreviate $\Hom(\C,T)$ to $T^\C$.
\end{notation}

In the remainder of this subsection, we set $\G = \L^2_G$.

\begin{definition}
Let $(P,T)$ be an object in $\mathrm{Orb}_{\L^2_G}^{\ab}$. We write $\Fr(T)$ for the subspace of $T^\C$ consisting of frames with image in $T$.
\end{definition}

\begin{lemma}\label{complex}
Let $(P,T)$ be an object in $\mathrm{Orb}_{\L^2_G}^{\ab}$. The space $\Fr(T)$ is a $\C^\times$-equivariant complex open submanifold of $T^\C$.
\end{lemma}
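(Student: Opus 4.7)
Since $T^\C = \Hom(\C, T)$ is a complex vector space, any open $\C^\times$-invariant subset of it inherits the structure of a $\C^\times$-equivariant complex open submanifold, so it is enough to verify that $\Fr(T)$ is open in $T^\C$ and closed under the $\C^\times$-action. The $\C^\times$-invariance is essentially the content of the remark following Proposition \ref{bijecto}: precomposition of $\gamma$ with $m_\lambda$ for $\lambda \in \C^\times$ reparametrises but does not change the image $\gamma(\C) \subseteq T$, and therefore preserves transitivity of the induced $\C$-action on $\Sigma$.

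For openness, the plan is to reduce the frame condition to a pointwise rank condition on a linear function of $\gamma$. Fix a basepoint $b \in \Sigma$ and, for each $\gamma \in T^\C$, consider the orbit map $o_\gamma : \C \to \Sigma$, $z \mapsto \gamma(z)(b)$. The homomorphism relation $o_\gamma(z+w) = \gamma(z) \cdot o_\gamma(w)$, combined with the fact that each $\gamma(z)$ is a diffeomorphism of $\Sigma$, implies that $o_\gamma$ has constant rank on $\C$, equal to the rank of $\mu_b \circ d\gamma_0 : \C \to \Lie T \to T_b \Sigma$, where $\mu_b$ is the infinitesimal action map. Under the identification $\Hom(\C, T) \cong \Hom_\R(\C, \Lie T)$ by differentiation at $0$, the assignment $\gamma \mapsto d(o_\gamma)_0$ is $\R$-linear, and in particular continuous.

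The central claim I would prove is the equivalence $\gamma \in \Fr(T) \iff d(o_\gamma)_0$ has rank $2$. The forward direction follows from the constant rank theorem: a smooth constant-rank map onto the $2$-manifold $\Sigma$ cannot have rank below $2$. For the reverse direction, rank $2$ makes $o_\gamma$ a submersion, so $o_\gamma(\C)$ is open in $\Sigma$. Setting $H := \overline{\gamma(\C)} \leq T$, one observes that $H$ is a compact connected subtorus (connected because $\gamma(\C)$ is), and $H \cdot b$ is then a closed submanifold of $\Sigma$ containing the open set $o_\gamma(\C)$; by dimension and connectedness, $H \cdot b = \Sigma$. Writing $\Sigma \cong H/L$ with $L = \mathrm{Stab}_H(b)$, the map $o_\gamma$ becomes a continuous homomorphism $\C \to H/L$ between abelian Lie groups, and since $\C$ is simply connected it lifts to an $\R$-linear map to the universal cover $\R^2$ of the $2$-torus $H/L$; the rank-$2$ hypothesis forces this lift to be surjective onto $\R^2$, so $o_\gamma$ surjects onto $\Sigma$.

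Granted this equivalence, $\Fr(T)$ is the preimage of the open locus of rank-$2$ maps in $\Hom_\R(\C, T_b \Sigma)$ under the continuous map $\gamma \mapsto d(o_\gamma)_0$, and so is open in $T^\C$. The main subtlety is the reverse direction of the equivalence: one must upgrade $o_\gamma(\C)$ from being open in $\Sigma$ to being all of $\Sigma$, which is delicate because $\gamma(\C)$ itself need not be closed in $T$ (a dense $1$-parameter subgroup of a higher-dimensional subtorus is a typical obstacle), so a priori $o_\gamma(\C) = \gamma(\C) \cdot b$ is only dense in $H \cdot b$. Lifting through the universal cover of $H/L$ circumvents this by reducing surjectivity to a rank statement about an $\R$-linear map between finite-dimensional real vector spaces.
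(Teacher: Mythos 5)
Your proof is correct and follows essentially the same route as the paper: reduce to showing $\Fr(T)$ is a $\C^\times$-invariant open subset of $T^\C$ (the complex submanifold structure then being automatic), obtain invariance from the fact that scaling preserves the image $\gamma(\C)$, and obtain openness from an open rank condition on the continuous ($\R$-linear) assignment sending $\gamma$ to its derivative at $0$ -- the paper uses $d(\tilde{\pi}|_T\circ\gamma)_0$ with a nonzero-determinant criterion where you use the orbit map differential $d(o_\gamma)_0$. The only real difference is that you spell out the equivalence (frame $\iff$ rank $2$), in particular the reverse implication via the lift of $\C \to H/L$ to the universal cover, which the paper asserts without proof.
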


\begin{proof}
It suffices to show that $\Fr(T)$ is a $\C^\times$-equivariant open subset of $T^\C$, from which it follows that $\Fr(T)$ automatically has the structure of a complex submanifold of $T^\C$. Let $\pi: P \to \Sigma$ be the underlying principal bundle and $\tilde{\pi}$ the induced homomorphism of topological groups $\Aut(P) \to \Diff(\Sigma)$. The image $\tilde{T}:= \tilde{\pi}(T)$ of $T$ is also compact abelian, and $T$ and $\tilde{T}$ both have smooth structures such that the restriction of $\tilde{\pi}$ to $T$ is a homomorphism of Lie groups. Let $\gamma: \C \to T$ be a homomorphism of Lie groups. The derivative at zero of the composite $\tilde{\pi}|_T \circ \gamma$ is an $\R$-linear map $\C \to \Lie(\tilde{T})$, and $\gamma$ is a frame if and only if this map has nonzero determinant. Therefore the subspace $\Fr(T) \subset T^\C$ of frames is open. To see that the $\C^\times$-action on $\Hom(\C,T)$ preserves the subspace $\Fr(T)$, note that the $\C^\times$-action on a frame $\gamma$ preserves the image $\gamma(\C) \leq T$ and that it is only the image of $\gamma$ which determines whether the induced action on $\Sigma$ is transitive or not. 
\end{proof}

\begin{lemma}\label{subb}
The mapping which sends $(P,T)$ to $\Fr(T)$ and a morphism $\phi:(P,T) \to (P',T')$ to $\gamma \mapsto \phi \circ \gamma$ defines a functor
\[
\Fr:\mathrm{Orb}^{\ab}_{\L^2_G} \longrightarrow \CxMfd^{\C^\times}.
\]
\end{lemma}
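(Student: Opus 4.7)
The plan is to reduce everything to the already-constructed functor $\Hom(\C,-): \Orb \to \CxMfd^{\C^\times}$ from the previous subsection, together with Lemma \ref{complex}. Since $\Fr(T) \subset \Hom(\C,T)$ is a $\C^\times$-equivariant open complex submanifold, the only genuine content of the lemma is to verify that each morphism $\phi:(P,T) \to (P',T')$ in $\mathrm{Orb}^{\ab}_{\L^2_G}$ sends $\Fr(T)$ into $\Fr(T')$ under postcomposition. Once that is checked, functoriality, $\C^\times$-equivariance, and holomorphicity are all inherited from the ambient functor $\Hom(\C,-)$.

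The key step is the containment. By Proposition \ref{yoneda}, the morphism $\phi$ is represented by some $g: P' \to P$ in $\L^2_G$ satisfying $g^{-1}Tg \leq T'$, and the induced homomorphism $T \to T'$ is $t \mapsto g^{-1} t g$. Let $\tilde{g}: \Sigma' \to \Sigma$ denote the underlying diffeomorphism of base manifolds and let $\tilde{\pi}': \Aut(P') \to \Diff(\Sigma')$ be the projection. For a frame $\gamma \in \Fr(T)$ and any $z \in \C$, the induced diffeomorphism $\tilde{\pi}'(\phi \circ \gamma)(z)$ of $\Sigma'$ equals $\tilde{g}^{-1} \circ \tilde{\pi}(\gamma(z)) \circ \tilde{g}$. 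Since $\tilde{g}$ is a diffeomorphism and the $\C$-action $z \mapsto \tilde{\pi}(\gamma(z))$ on $\Sigma$ is transitive, conjugation by $\tilde{g}$ yields a transitive $\C$-action on $\Sigma'$. Hence $\phi \circ \gamma \in \Fr(T')$.

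For functoriality, given $\phi: (P,T) \to (P',T')$ and $\phi': (P',T') \to (P'',T'')$ represented by $g: P' \to P$ and $g': P'' \to P'$ respectively, Proposition \ref{yoneda} (together with composition in $\G$) shows that $\phi' \circ \phi$ is represented by $g \circ g': P'' \to P$, and the associated linear map $\Hom(\C,T) \to \Hom(\C,T'')$ coincides with the composite of the linear maps associated to $\phi'$ and $\phi$. Restriction to frame subspaces then preserves composition, and the identity morphism of $(P,T)$ is clearly sent to the identity of $\Fr(T)$.

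For the remaining structure: the restriction of the $\C$-linear map $\phi: \Hom(\C,T) \to \Hom(\C,T')$ to the open submanifold $\Fr(T)$ is automatically holomorphic, and the $\C^\times$-action on $\Hom(\C,-)$ commutes with postcomposition by a homomorphism $T \to T'$ (a direct check using $(\lambda \cdot \gamma)(z) = \gamma(\bar{\lambda} z)$), so the restricted map is $\C^\times$-equivariant. I do not anticipate a serious obstacle here; the only subtle point worth articulating carefully is the containment $\phi(\Fr(T)) \subset \Fr(T')$, since it requires making explicit the compatibility between the algebraic representative $g$ of $\phi$ and the geometric notion of transitivity that defines a frame.
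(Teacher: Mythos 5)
Your proposal is correct and follows essentially the same route as the paper: cite Lemma \ref{complex} for the $\C^\times$-equivariant open submanifold structure, and use Proposition \ref{yoneda} to represent $\phi$ by a diffeomorphism $g:P'\to P$ so that conjugation preserves transitivity of the induced $\C$-action on the base, hence sends frames to frames. Your additional explicit checks of functoriality and $\C^\times$-equivariance are fine but are just the inheritance from $\Hom(\C,-)$ that the paper leaves implicit.
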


\begin{proof}
This is partially implied by Proposition \ref{complex}. It remains to show that the mapping defined on morphisms sends frames to frames. Let $\gamma: \C \to T$ be a frame in $P$ and let $\phi:(P,T) \to (P',T')$ be a morphism in $\Orb$. By Proposition \ref{yoneda}, there exists a diffeomorphism $g: P' \to P$ so that the homomorphism $\phi:T \to T'$ of Lie groups is given by conjugating $\gamma$ with $g$. Since $g$ is a diffeomorphism and $\gamma$ is a frame, the subgroup $g^{-1}\gamma(\C) g \leq T'$ must act transitively on $\Sigma'$. Therefore $\phi \circ \gamma$ is a frame in $P'$.
\end{proof}

\begin{remark}
We continue to use $\tilde{T}$ to denote the image of a compact abelian subgroup $T \leq \Aut(P)$ under the canonical homomorphism $\tilde{\pi}: \Aut(P) \to \Diff(\Sigma)$. Let $\Gau(T)$ denote the intersection $T \cap \Gau(P)$, so that we have an exact sequence of compact abelian groups
\[
1 \to \Gau(T) \to T \to \tilde{T} \to 1.
\]
This sequence is natural in $T$. In particular, it is $W(\Aut(P)/T)$-equivariant, with $W(\Aut(P)/T)$ acting on $\tilde{T}$ via the homomorphism
\[
W(\Aut(P)/T) \to W(\Diff(\Sigma)/\tilde{T})
\]
induced by $\tilde{\pi}: \Aut(P) \to \Diff(\Sigma)$. Applying $\Hom(\C,-)$ yields an $\C^\times \times W(\Aut(P)/T)$-equivariant exact sequence of complex vector spaces. Let $\Fr(\tilde{T}) \subset \Hom(\C,\tilde{T})$ be the subspace of homomorphisms which induce a smooth transitive action on $\Sigma$. Then, by definition, $\Fr(T)$ fits into the $\C^\times \times W(\Aut(P)/T)$-equivariant pullback square
\[
\begin{tikzcd}
\Fr(T) \ar[r,hook] \ar[d,two heads]& \Hom(\C,T) \ar[d,two heads] \\
\Fr(\tilde{T}) \ar[r,hook] & \Hom(\C,\tilde{T}).
\end{tikzcd}
\]
This exhibits $\Fr(T)$ as a $\C^\times \times W(\Aut(P)/T)$-equivariant bundle of affine spaces with underlying vector space $\Hom(\C,\Gau(T))$. 
\end{remark}

\begin{proposition}\label{sectionz}
To each frame $\gamma \in \Fr(T)$ is associated a continuous global section $s_\gamma$ of the bundle $\Fr(T) \twoheadrightarrow \Fr(\tilde{T})$ such that $s_\gamma(\tilde{\gamma}) = \gamma$. The induced map from the set $\Fr(T)$ into the set of sections of $\Fr(T) \twoheadrightarrow \Fr(\tilde{T})$ is $\C^\times \times W(\Aut(P)/T)$-equivariant.
\end{proposition}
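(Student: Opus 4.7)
The plan is to define $s_\gamma$ through the geometric description of frames given by Proposition \ref{bijecto}. Given $\gamma \in \Fr(T)$ with associated triple $(\Lambda_\gamma, \eta_\gamma, \nu_\gamma)$, and given $\tilde\gamma' \in \Fr(\tilde T)$ with associated complex structure $(\Lambda_{\tilde\gamma'}, \eta_{\tilde\gamma'})$ on $\Sigma$ (obtained from the evident analogue of Proposition \ref{bijecto} for transitive $\C$-actions on $\Sigma$), I set $s_\gamma(\tilde\gamma') \in \Fr(P)$ to be the frame corresponding to the triple $(\Lambda_{\tilde\gamma'}, \eta_{\tilde\gamma'}, \nu_\gamma)$. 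In words, one retains the flat connection of $\gamma$ and substitutes in the complex structure of $\tilde\gamma'$. The identity $s_\gamma(\tilde\gamma) = \gamma$ and the section property $\tilde\pi \circ s_\gamma(\tilde\gamma') = \tilde\gamma'$ are then immediate from the construction.

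The heart of the argument, and the step I expect to be the main obstacle, is to verify that $s_\gamma(\tilde\gamma')$ lies in $\Fr(T)$ and not just in $\Fr(P)$. I would prove this by showing that the image $\gamma(\C) \subseteq \Aut(P)$ of a frame depends only on its flat connection, so that $s_\gamma(\tilde\gamma')(\C) = \gamma(\C) \subseteq T$. Unwinding the construction of $\gamma$ from its triple in the proof of Proposition \ref{bijecto}, the element $\gamma(z)$ is parallel transport with respect to $\nu_\gamma$ along the image in $\Sigma$ of the straight segment from $0$ to $z$ in $\C$, via the composite $\C \twoheadrightarrow \C/\Lambda_\gamma \xrightarrow{\eta_\gamma} \Sigma$. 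Lifting this composite to the universal cover $\tilde\Sigma$ of $\Sigma$ yields an $\R$-linear group isomorphism $\C \xrightarrow{\sim} \tilde\Sigma$, so as $z$ ranges over $\C$ the resulting lifted paths sweep out precisely the set of all straight segments in $\tilde\Sigma$ starting at the origin. The same holds for any frame with the same flat connection but a different complex structure, and since parallel transport by a flat connection depends only on the homotopy class of the path, the subsets $\gamma(\C)$ and $s_\gamma(\tilde\gamma')(\C)$ of $\Aut(P)$ coincide.

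For continuity of $s_\gamma$, it suffices to observe that the triple-to-frame bijection of Proposition \ref{bijecto} is continuous in its parameters, so the composite $\tilde\gamma' \mapsto (\Lambda_{\tilde\gamma'}, \eta_{\tilde\gamma'}, \nu_\gamma) \mapsto s_\gamma(\tilde\gamma')$ is continuous. For $\C^\times \times W(\Aut(P)/T)$-equivariance of the assignment $\gamma \mapsto s_\gamma$, I would invoke Proposition \ref{actionn}: the $\C^\times$-action rescales the lattice $\Lambda$ and twists the diffeomorphism $\eta$ while leaving $\nu$ unchanged, and a Weyl element $g$ acts on all three components of the triple via its underlying bundle automorphism. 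Since the rule defining $s_\gamma$ is the functorial one of retaining the $\nu$-component of $\gamma$ and substituting in the $(\Lambda, \eta)$-component of $\tilde\gamma'$, both actions intertwine with the section construction, giving $s_{\alpha \cdot \gamma}(\alpha \cdot \tilde\gamma') = \alpha \cdot s_\gamma(\tilde\gamma')$ for all $\alpha \in \C^\times \times W(\Aut(P)/T)$.
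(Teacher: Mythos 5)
Your construction is, in substance, the same section as the paper's (the paper defines $s_\gamma(\tilde{\gamma}') := \gamma \circ \eta_{\gamma',\gamma}$, where $\eta_{\gamma',\gamma}:\C\to\C$ is the linear map comparing the two complex structures; unwinding Proposition \ref{bijecto}, this frame has exactly the triple $(\Lambda_{\gamma'},\eta_{\gamma'},\nu_\gamma)$ you prescribe). But the step you yourself identify as the heart of the matter — that $s_\gamma(\tilde{\gamma}')$ lands in $\Fr(T)$ and not merely $\Fr(P)$ — is where your argument breaks. The claim that ``the image $\gamma(\C)\subseteq\Aut(P)$ of a frame depends only on its flat connection'' is false. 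Already for $G=e$, so $P=\Sigma$, $\Aut(P)=\Diff(\Sigma)$ and the connection is unique: take the standard frame $\gamma$ on $\bT^2=\R^2/\Z^2$ (image the translation torus) and $\gamma'=\phi\gamma\phi^{-1}$ for a diffeomorphism $\phi$ not normalising the translations; both are frames with the same connection but $\gamma'(\C)=\phi\,\gamma(\C)\,\phi^{-1}\neq\gamma(\C)$. The flaw in your lifting argument is that ``straight segments in the universal cover'' is not an intrinsic notion: the affine structure on $\widetilde{\Sigma}$ is induced by the chosen covering $\C\twoheadrightarrow\Sigma$, i.e.\ by the complex structure, and two non-affinely-related structures give genuinely different families of paths, hence different parallel-transport operators. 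Your argument never uses the hypothesis $\tilde{\gamma}'\in\Fr(\tilde{T})$, yet that hypothesis is exactly what is needed.

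The repair is to use that hypothesis: since $\tilde{T}^0$ is a $2$-torus acting effectively and transitively on $\Sigma$, it acts simply transitively, and the orbit map shows $\tilde{\gamma}(\C)=\tilde{T}^0=\tilde{\gamma}'(\C)$. Hence the two complex structures are affinely related, i.e.\ $\eta_{\gamma'}=\eta_\gamma\circ(\text{the map induced by a linear }\ell:\C\to\C$ with $\ell(\Lambda_{\gamma'})=\Lambda_\gamma)$, and flatness then gives $s_\gamma(\tilde{\gamma}')=\gamma\circ\ell$, whose image is $\gamma(\C)\subseteq T$ — which is precisely the paper's definition, where membership in $T$, continuity (via continuous dependence of $\ell=\eta_{\gamma',\gamma}$ on $\tilde{\gamma}'$) and the equivariance identity all become immediate. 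Your continuity claim (``the triple-to-frame bijection is continuous in its parameters'') and the equivariance sketch via Proposition \ref{actionn} are fine in spirit but are most cleanly justified from this reparametrisation formula rather than from the geometric description.
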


\begin{proof}
Fix $\gamma,\gamma' \in \Fr(T)$ and choose $b \in \Sigma$. Associated to $\gamma$ and $\gamma'$ are diffeomorphisms
\[
\C/\Lambda_{\gamma'} \xrightarrow{\tilde{\gamma}'(-)(b)} \Sigma \xleftarrow{\tilde{\gamma}(-)(b)} \C/\Lambda_{\gamma}.
\]
We denote the composite of the first arrow with the inverse of the second by $\tilde{\eta}_{\gamma',\gamma}$. It is easily verified that $\tilde{\eta}_{\gamma',\gamma}$ is an isomorphism of underlying Lie groups that does not depend on the choice of $b \in \Sigma$. Let $\eta_{\gamma',\gamma}: \C \to \C$ be the map of Lie algebras induced by $\tilde{\eta}_{\gamma',\gamma}$, so that it fits into the commutative diagram
\[
\begin{tikzcd}
\C \ar[d,two heads]\ar[rr,"\eta_{\gamma',\gamma}"] && \C \ar[d,two heads] \ar[r,"\gamma"] & T \ar[d,two heads]  \\
\C/\Lambda_{\gamma'} \ar[rr,"\tilde{\eta}_{\gamma',\gamma}"] &&\C/\Lambda_{\gamma} \ar[r,"\tilde{\gamma}"] & \tilde{T}
\end{tikzcd}
\]
We define $s_\gamma: \Fr(\tilde{T}) \to \Fr(T)$ to be the map sending $\tilde{\gamma}'$ to be the composite of the upper two arrows. Thus $s_\gamma$ is continuous, since the $\R$-linear map $\eta_{\gamma',\gamma}$ varies continuously with $\gamma'$. Furthermore, $s_\gamma$ is a section of the fiber bundle $\Fr(T) \to \Fr(\tilde{T})$ because the composite of the lower two arrows in the diagram above is equal to $\tilde{\gamma}'$ (noting that we may choose any $b \in \Sigma$ in the construction of $\tilde{\eta}_{\gamma',\gamma}$). Finally, by using Proposition \ref{actionn} in conjunction with the diagram above it is straightforward to verify the equation
\begin{equation}\label{acc}
s_{g^{-1}\gamma(\widebar{\lambda}z)g}(\tilde{g}^{-1}\tilde{\gamma}'([\widebar{\lambda}z]) \tilde{g}) = g^{-1} s_\gamma(\tilde{\gamma}')(\widebar{\lambda}z)g
\end{equation}
which implies the final claim in the proposition.
\end{proof}

\begin{corollary}\label{triv}
Each frame $\gamma \in \Fr(T)$ determines a trivialisation
\begin{align*}
\Fr(T) &\xrightarrow{\sim} \Fr(\tilde{T}) \times \Hom(\C,\Gau(T)) \\
\gamma' &\mapsto (\tilde{\gamma}', \gamma' - s_\gamma (\tilde{\gamma}')) 
\end{align*}
of the bundle $\Fr(T) \twoheadrightarrow \Fr(\tilde{T})$, where the subtraction takes place in the vector space $\Hom(\C,T)$. The trivialisation sends $\gamma$ to $(\tilde{\gamma},0)$.
\end{corollary}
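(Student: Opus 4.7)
The plan is to exhibit the claimed trivialisation as the one induced by the section $s_\gamma$ of the affine bundle $\Fr(T) \twoheadrightarrow \Fr(\tilde{T})$ constructed in Proposition \ref{sectionz}. Recall from the remark preceding Proposition \ref{sectionz} that $\Fr(T)$ sits inside $\Hom(\C,T)$ as the preimage of $\Fr(\tilde{T})$ under $\tilde{\pi}_* : \Hom(\C,T) \twoheadrightarrow \Hom(\C,\tilde{T})$, and the fibers of this projection are affine spaces whose underlying vector space is $\Hom(\C,\Gau(T))$. Once a section is fixed, any affine bundle of this form acquires a canonical trivialisation.

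First I would check that the formula $\gamma' \mapsto (\tilde{\gamma}', \gamma' - s_\gamma(\tilde{\gamma}'))$ is well-defined, that is, that the difference $\gamma' - s_\gamma(\tilde{\gamma}')$ actually lands in $\Hom(\C,\Gau(T))$. Since $s_\gamma$ is a section, applying $\tilde{\pi}_*$ to $\gamma' - s_\gamma(\tilde{\gamma}')$ gives $\tilde{\gamma}' - \tilde{\gamma}' = 0$ in $\Hom(\C,\tilde{T})$, so this homomorphism factors through the kernel $\Gau(T) = T \cap \Gau(P)$. Continuity follows from the continuity of $s_\gamma$ established in Proposition \ref{sectionz} together with the continuity of subtraction in the topological vector space $\Hom(\C,T)$.

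Next I would define an explicit inverse
\[
\Fr(\tilde{T}) \times \Hom(\C,\Gau(T)) \longrightarrow \Fr(T), \qquad (\tilde{\gamma}', \delta) \longmapsto s_\gamma(\tilde{\gamma}') + \delta,
\]
where $\delta$ is regarded as an element of $\Hom(\C,T)$ via the inclusion $\Gau(T) \hookrightarrow T$. To see that this lands in $\Fr(T)$, note that applying $\tilde{\pi}_*$ to the sum yields $\tilde{\gamma}' + 0 = \tilde{\gamma}' \in \Fr(\tilde{T})$, so the induced action on $\Sigma$ is transitive, which is the only condition for being a frame. A direct computation, using that $s_\gamma$ is a section, shows that the two composites are the identity on $\Fr(T)$ and on $\Fr(\tilde{T}) \times \Hom(\C,\Gau(T))$ respectively. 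The final assertion that $\gamma \mapsto (\tilde{\gamma}, 0)$ is immediate from $s_\gamma(\tilde{\gamma}) = \gamma$, which was the defining property of $s_\gamma$ in Proposition \ref{sectionz}.

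I do not expect any substantial obstacle here: the content is entirely in Proposition \ref{sectionz}, and what remains is the standard passage from "a section of an affine bundle" to "a trivialisation of that bundle by the modelling vector space". The only mild subtlety is bookkeeping the inclusion $\Hom(\C,\Gau(T)) \hookrightarrow \Hom(\C,T)$ so that sums and differences are taken in the ambient vector space $\Hom(\C,T)$, as the statement already specifies.
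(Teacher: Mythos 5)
Your proof is correct and is essentially the paper's argument: the corollary is stated as an immediate consequence of Proposition \ref{sectionz}, with the trivialisation being exactly the one induced by the continuous section $s_\gamma$ of the affine bundle $\Fr(T) \twoheadrightarrow \Fr(\tilde{T})$ modelled on $\Hom(\C,\Gau(T))$. Your explicit checks (the difference lies in $\Hom(\C,\Gau(T))$ since $\tilde{\pi}_*$ kills it, the inverse $(\tilde{\gamma}',\delta)\mapsto s_\gamma(\tilde{\gamma}')+\delta$ lands in $\Fr(T)$ because transitivity on $\Sigma$ depends only on the image in $\tilde{T}$, and $s_\gamma(\tilde{\gamma})=\gamma$ gives the last claim) are precisely the bookkeeping the paper leaves implicit.
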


\begin{remark} 
The trivialisation in Corollary \ref{triv} is non-equivariant, regardless of the choice of $\gamma$ (where the right hand side is equipped with its natural diagonal action of $\C^\times \times W(\Aut(P)/T)$). 
\end{remark}

\begin{example}\label{freg}
Suppose that $A \leq G$ is a torus in $G$ and that $P$ is the trivial bundle $\bT^2 \times G \twoheadrightarrow \bT^2$. Let $T$ be the torus 
\[
T := A \times \bT^2_{transl} \leq \Map(\bT^2,G) \rtimes \Diff(\bT^2) = \Aut(P)
\]
consisting of constant maps $\bT^2 \to A \leq G$ and translations of $\bT^2$. Let $\gamma: \C \to A \times \bT^2$ be a frame whose projection to $A$ is trivial. We have
\[
\Gau(T) := T \cap \Gau(P) = (A \times \bT^2) \cap \Map(\bT^2,G) = A.
\]
Consider the section $s_{\gamma}$ defined in Proposition \ref{sectionz}. For each frame $\gamma'$, the section returns a frame $s_{\gamma} (\tilde{\gamma}') \in \Fr(A\times \bT^2)$ in the same fiber as $\gamma'$. Our choice of $\gamma$ means that the projection of $s_{\gamma_\tau} (\tilde{\gamma}')$ to $A$ is trivial, while its projection to $\bT^2$ is $\tilde{\gamma}'$. The trivialisation given by Corollary \ref{triv} is then
\begin{align*}
\Fr(A \times \bT^2) &\xrightarrow{\sim} \Fr(\bT^2) \times \Hom(\C,A) \\
\gamma' &\mapsto (\gamma'_{\bT^2}, \gamma_A')
\end{align*}
where $\gamma'_{\bT^2}, \gamma_A'$ denote the projections of $\gamma'$ to $\bT^2$ and $A$ respectively. Notice that the trivialisation does not depend on $\gamma_{\bT^2}$. 
\end{example}

\begin{lemma}\label{inject}
Let $p \in P$ and let $A_p(T)$ denote the (compact abelian) image of $\Gau(T)$ under the homomorphism
\[
\Phi_p: \Gau(P) \to G
\]
sending $f$ to the unique element $g \in G$ such that $f(p) = p\cdot g$. If $\Fr(T)$ is not empty, then the restriction
\[
\Phi_p: \Gau(T) \xrightarrow{\sim} A_p(T)
\]
to $\Gau(T)$ is an isomorphism of Lie groups.
\end{lemma}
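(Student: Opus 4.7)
The plan is to show that $\Phi_p|_{\Gau(T)}$ is injective, since it is by construction surjective onto $A_p(T)$, and since both $\Gau(T)$ and $A_p(T)$ are compact abelian Lie groups (the image is a continuous image of a compact group in $G$, abelian because $\Gau(T) \leq T$ is abelian), any continuous bijective group homomorphism between them is automatically an isomorphism of Lie groups. So the entire content of the lemma reduces to showing $\ker(\Phi_p|_{\Gau(T)}) = \{\id_P\}$.

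The key idea is to exploit a frame $\gamma \in \Fr(T)$ (which exists by hypothesis) to produce, from the single fixed point $p$, an entire fiber-wise dense family of fixed points. Concretely, I would proceed as follows. Take $f \in \Gau(T)$ with $f(p) = p$. Since $T$ is abelian and both $f$ and $\gamma(z)$ lie in $T$ for every $z \in \C$, they commute as elements of $\Aut(P)$, and therefore
\[
f(\gamma(z)\cdot p) = \gamma(z)\cdot f(p) = \gamma(z) \cdot p \quad \text{for all } z \in \C.
\]
Thus $f$ fixes the entire orbit $\gamma(\C)\cdot p \subset P$. Projecting to $\Sigma$ and using that the induced $\C$-action $\tilde{\gamma}$ is transitive on $\Sigma$, the orbit $\gamma(\C)\cdot p$ meets every fiber of $P \to \Sigma$.

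Finally, I would invoke the free and transitive right $G$-action on each fiber: a gauge transformation $f$ restricts on each fiber $P_\sigma$ to multiplication by a single element $g_\sigma \in G$, so if $f$ fixes any one point of $P_\sigma$ then $g_\sigma = e$ and $f|_{P_\sigma} = \id$. Combining this with the previous step gives $f = \id_P$, hence the kernel is trivial and $\Phi_p|_{\Gau(T)}$ is an isomorphism of compact abelian Lie groups.

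I do not anticipate any serious obstacle here: the abelianness of $T$ does all the work by ensuring that elements of $\Gau(T)$ commute with the chosen frame, which is exactly what is needed to spread a single fixed point across all of $P$. The only point to be slightly careful about is recording that $A_p(T)$ carries a compatible Lie group structure as a closed subgroup of $G$, but this is immediate since $\Gau(T)$ is compact and $\Phi_p$ is continuous.
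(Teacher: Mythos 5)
Your argument is correct and takes essentially the same route as the paper: both exploit that $f \in \Gau(T)$ commutes with a chosen frame $\gamma \in \Fr(T)$, so that transitivity of the induced $\C$-action on $\Sigma$ together with $G$-equivariance propagates the value of $f$ at $p$ (i.e.\ $\Phi_p(f)$) to all of $P$, showing $f$ is determined by $\Phi_p(f)$; the paper likewise treats the passage from continuous bijective homomorphism of compact groups to Lie group isomorphism as automatic. One minor wording slip: for nonabelian $G$ a gauge transformation does not act on a fiber by multiplication by a \emph{single} element of $G$ (the element is conjugated as the point in the fiber varies), but the fact you actually use --- that fixing one point of a fiber forces the identity on the whole fiber --- follows directly from $G$-equivariance, so the argument stands.
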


\begin{proof}
It suffices to show that if $\Fr(T)$ is not empty then $\Phi_p$ is injective on $\Gau(T)$. Let $\gamma$ be a frame in $T$. Then for any $p' \in P$ there exist $z \in \C$ and $g \in G$ such that $p' = \gamma(z)(p)\cdot g$. Let $f \in \Gau(T)$. Since $f$ lies in $T$, it commutes with $\gamma(z) \in T$ for all $z \in \C$. Thus
\begin{align*}
f(p') &= f(\gamma(z)(p)\cdot g) \\
&= \gamma(z)(f(p))\cdot g \\
&= \gamma(z)(p) \cdot \Phi_p(f) g 
\end{align*}
So $f$ is determined by $\Phi_p(f)$ and the map is injective.
\end{proof}

\begin{proposition}\label{triv2}
A choice of $\gamma \in \Fr(T)$, a basis $(t_1,t_2)$ for $\Lambda_\gamma$ and a point $p \in P$ induces a homeomorphism
\[
\Fr(T) \cong \X \times \Hom(\R,A_p(T)) \times \Hom(\R,A_p(T))
\]
where $\X \subset \C^2$ denotes the subspace of pairs $(t_1,t_2)$ such that $\R t_1 + \R t_2 = \C$ and $A_p(T)$ is the compact abelian image of $\Gau(T)$ in $G$ under $\Phi_p$.
\end{proposition}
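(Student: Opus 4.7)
The plan is to split $\Fr(T)$ into its projection to $\tilde T$ and its gauge part via Corollary \ref{triv}. Applying that corollary at the chosen frame $\gamma$ gives a homeomorphism
\[
\Fr(T) \xrightarrow{\sim} \Fr(\tilde T) \times \Hom(\C,\Gau(T)),
\]
so the proposition reduces to identifying $\Fr(\tilde T) \cong \X$ and $\Hom(\C,\Gau(T)) \cong \Hom(\R,A_p(T))^2$.

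For the second factor I would invoke Lemma \ref{inject}, which uses $p$ to produce a Lie group isomorphism $\Phi_p\colon \Gau(T) \xrightarrow{\sim} A_p(T)$; postcomposition identifies $\Hom(\C,\Gau(T))$ with $\Hom(\C,A_p(T))$. Because $\C$ is simply connected and $A_p(T)$ is a compact abelian Lie group, a continuous homomorphism $\C \to A_p(T)$ is determined by its derivative at the origin, an $\R$-linear map $\C \to \Lie(A_p(T))$. The basis $(t_1,t_2)$ of $\Lambda_\gamma$ is in particular an $\R$-basis of $\C$, so evaluation at the one-parameter subgroups $r \mapsto rt_1$ and $r \mapsto rt_2$ gives a homeomorphism $\Hom(\C,A_p(T)) \xrightarrow{\sim} \Hom(\R,A_p(T))^2$.

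The main work is the identification $\Fr(\tilde T) \cong \X$. Since $\gamma$ exists, $\tilde T$ acts faithfully (as a subgroup of $\Diff(\Sigma)$) and transitively on $\Sigma$; because $\tilde T$ is abelian the stabiliser at $b$ equals the stabiliser at every point of $\Sigma$, hence equals the kernel of the action, so faithfulness forces the stabilisers to be trivial and $\Sigma \cong \tilde T$ as $\tilde T$-spaces. In particular $\tilde T$ is itself a $2$-torus, $\tilde\gamma$ is a surjective homomorphism $\C \twoheadrightarrow \tilde T$ with kernel $\Lambda_\gamma$, and the derivative $d\tilde\gamma \colon \C \xrightarrow{\sim} \Lie(\tilde T)$ is an $\R$-linear isomorphism. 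An $\alpha \in \Hom(\C,\tilde T)$ is a frame precisely when its derivative $d\alpha \colon \C \to \Lie(\tilde T)$ is itself an isomorphism, so the map
\[
\alpha \longmapsto \bigl((d\tilde\gamma)^{-1}(d\alpha(t_1)),\,(d\tilde\gamma)^{-1}(d\alpha(t_2))\bigr)
\]
is a continuous bijection $\Fr(\tilde T) \xrightarrow{\sim} \X$ with continuous inverse sending $(s_1,s_2) \in \X$ to the unique Lie group homomorphism $\alpha$ with $d\alpha(t_i) = d\tilde\gamma(s_i)$. Composing the three homeomorphisms yields the proposition. I expect the only genuine obstacle to be pinning down that $\tilde T$ really is a $2$-torus whenever $\Fr(T)$ is non-empty; once that is established, the remaining identifications are straightforward linear algebra.
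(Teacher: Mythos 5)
Your proposal is correct and follows the same skeleton as the paper's proof: split off the gauge part with Corollary \ref{triv}, transport the gauge factor along $\Phi_p$ from Lemma \ref{inject}, and identify $\Fr(\tilde{T})$ with $\X$ using the basis $(t_1,t_2)$. The only divergence is in how the last two identifications are realised. The paper sends $\tilde{\gamma}'$ to $(\eta^{-1}_{\gamma',\gamma}(t_1),\eta^{-1}_{\gamma',\gamma}(t_2))$, i.e.\ to the basis of $\Lambda_{\gamma'}$ corresponding to $(t_1,t_2)$, and then splits $\Hom(\C,\Gau(T))$ \emph{fibrewise} over $\X$ using that varying basis; this is what produces the explicit formula \eqref{formula}, whose shape $s\mapsto\Phi_p(\gamma'(st_i')\gamma(st_i)^{-1})$ is the one echoed later in Lemma \ref{diff} and Proposition \ref{nbhd}. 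You instead work with derivatives at the origin and the fixed basis $(t_1,t_2)$ throughout, so your coordinate on $\Fr(\tilde{T})$ is $\bigl((d\tilde{\gamma})^{-1}d\tilde{\gamma}'(t_1),(d\tilde{\gamma})^{-1}d\tilde{\gamma}'(t_2)\bigr)$, which is generally not a basis of $\Lambda_{\gamma'}$; since the proposition only asserts \emph{a} homeomorphism induced by the choices, this is perfectly legitimate, it just yields a different explicit map from the paper's (and one less directly adapted to the later local-finiteness arguments, which however do not cite this proposition's formula). A genuine plus of your write-up is the explicit verification that when $\Fr(T)\neq\emptyset$ the compact abelian group $\tilde{T}$ acts freely and transitively on $\Sigma$, hence is a $2$-torus and ``frame'' means ``invertible derivative''; the paper leaves this point implicit (it underlies the determinant criterion in Lemma \ref{complex} and the construction of $\tilde{\eta}_{\gamma',\gamma}$ in Proposition \ref{sectionz}).
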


\begin{proof}
There are isomorphisms of bundles
\begin{align*}
\Fr(T) &\cong \Fr(\tilde{T}) \times \Hom(\C,\Gau(T)) \\
&\cong \X \times \Hom(\C,\Gau(T))\\
&\cong  \X \times \Hom(\R^2,\Gau(T))\\
&\cong \X \times \Hom(\R^2,A_p(T)) \\
&\cong \X \times \Hom(\R,A_p(T)) \times \Hom(\R,A_p(T)).
\end{align*}
The first is the isomorphism of Corollary \ref{triv}, which is induced by the choice of $\gamma \in \Fr(T)$. The second isomorphism arises via pullback along the homeomorphism
\begin{align*}
\Fr(\tilde{T}) &\xrightarrow{\sim} \X \\
\tilde{\gamma}' &\mapsto (t_1',t_2') := (\eta^{-1}_{\gamma',\gamma}(t_1),\eta^{-1}_{\gamma',\gamma}(t_2))
\end{align*}
and is induced by the choice of basis $(t_1,t_2)$ for $\Lambda_\gamma$. The third is produced by applying $\Hom_{/\X}(-,\X \times \Gau(T))$ to the homeomorphism of bundles
\begin{align*}
\X \times \R^2 &\xrightarrow{\sim} \X \times \C \\
(t_1',t_2',s_1,s_2) &\mapsto (t_1',t_2',s_1t_1'+s_2t_2')
\end{align*}
over $\X$. The fourth is induced by the choice of $p \in P$ and the associated map $\Phi_p: \Gau(T) \xrightarrow{\sim} A_p(T)$, which is an isomorphism by Lemma \ref{inject} since $\Fr(T)$ is nonempty. The last isomorphism is clear.
\end{proof}

\begin{remark}
Explicitly, the map of Proposition \ref{triv2} sends 
\begin{equation}\label{formula}
\gamma' \mapsto (t_1',t_2', s\mapsto \Phi_p(\gamma'(st_1')\gamma(st_1)^{-1}), s\mapsto \Phi_p(\gamma'(st_2')\gamma(st_2)^{-1})) 
\end{equation}
where $(t_1',t_2') \in \X$ is the basis for $\Lambda_{\gamma'}$ corresponding to $(t_1,t_2)$ via $\eta_{\gamma',\gamma}$. The inverse map sends $(t_1',t_2',\phi_1,\phi_2)$ to the frame determined by
\[
(z, p) \mapsto \gamma(s_1t_1+s_2t_2)(p) \cdot \phi_1(s_1)\phi_2(s_2)
\]
where $(s_1,s_2)$ is the unique pair of real numbers such that $z = s_1t_1'+s_2t_2'$. 
\end{remark}

\section{Categories of sheaves over the orbit category}\label{s4}

\begin{definition}
Let $X$ be a complex manifold equipped with holomorphic $\C^\times$-action $a: \C^\times \times X \to X$. Let $p: \C^\times \times X \to X$ denote the projection. Write $\Sh(X)^{\C^\times}$ for the category of $\C^\times$-equivariant, graded sheaves of rings over $X$. Its objects are pairs $(\J^\bullet,\alpha)$, where $\J^\bullet$ is a sheaf
\[
\J^\bullet: \Open(X)^{\op} \to \Ring^\bullet
\]
with values in $\Z$-graded rings and $\alpha$ is a natural isomorphism $a^* \J^\bullet\to p^* \J^\bullet$ satisfying the natural cocycle condition over $\C^\times \times \C^\times \times X$. A morphism of such sheaves is a natural transformation $\psi: \J^\bullet\to \J'^\bullet$ such that
\[
\begin{tikzcd}
a^*\J^\bullet\ar[d,"\alpha"]\ar[r,"a^* \psi"] & a^*\J'^\bullet \ar[d,"\alpha'"] \\
p^* \J^\bullet\ar[r,"p^* \psi"] & p^*\J'^\bullet
\end{tikzcd}
\]
commutes. 
\end{definition}

\begin{notation}
Write $\omega$ for the complex vector space $(\R/\Z)^\C := \Hom(\C,\R/\Z)$ and write $\omega^{\otimes k}$ for the $k$-fold tensor product of $\omega$ if $k$ is positive, the $-k$-fold tensor product of the dual $\omega^*$ if $k$ is negative, and $\C$ if $k = 0$.
\end{notation}

\begin{definition}\label{otsheaf}
Let $X$ be a complex manifold. We write $\O^\bullet_X$ for the $\C^\times$-equivariant graded sheaf over $X$ defined on an open $U \subset X$ by 
\[
\O^\bullet_X(U) := \bigoplus_{n\in\Z} \O^{n}_X(U)
\]
where
\[
\O^{2j}_X(U) = \{f: U \to \omega^{\otimes j} \, |\, f \text{ holomorphic }\} \quad \text{and} \quad \O^{2j+1}_X(U) = 0
\]
for all $j \in \Z$. Thus, $\O^\bullet_X(U) $ is graded ring with addition given by pointwise addition of functions and a product given by the tensor product $fg:= f \otimes g$, which is an element in $\O_X^{j+k}(U)$ whenever $f \in \O^{j}_X(U)$ and $g\in \O^{k}_X(U)$. If $X$ is a $\C^\times$-equivariant complex manifold, then $\O^\bullet_X$ has the $\C^\times$-equivariant structure which is given over $U \subset \C^\times \times X$ on the $2j$th piece by 
\begin{align*}
 (a^*\O_X^{2j})(U) &\to (p^*\O_X^{2j})(U) \\
f(\lambda,z) &\mapsto \lambda^{-j} f(\lambda,z)
\end{align*}
where $a, p : \C^\times \times X \to X$ are the action and projection maps, respectively.
\end{definition}

\begin{definition}
We write $\Sh(X,\O^\bullet)^{\C^\times}$ for the category of $\C^\times$-equivariant $\O^\bullet_X$-algebras. This is the category of objects in $\Sh(X)^{\C^\times}$ equipped with a structure morphism from $\O^\bullet_X$. A morphism in $\Sh(X,\O^\bullet)^{\C^\times}$ is a morphism in $\Sh(X)^{\C^\times}$ which is compatible with structure morphisms.
\end{definition}

\begin{remark}
Suppose that $\phi: X \to Y$ is a $\C^\times$-equivariant holomorphic map of $\C^\times$-equivariant complex manifolds. The pullback of sheaves along $\phi$ yields a functor
\[
\Sh(Y,\O^\bullet)^{\C^\times} \xrightarrow{\phi^*} \Sh(X,\O^\bullet)^{\C^\times} 
\]
\end{remark}

\begin{definition}
Define the category $\Sh^{\C^\times}$ over $\Orbop$ whose objects are triples $(P,T,\J^\bullet)$, where $(P,T)$ is an object in $\Orb$ and $\J^\bullet$ is an object in $\Sh(T^\C,\O^\bullet)^{\C^\times}$. A morphism is a pair $(\phi,h):(P,T,\J^\bullet) \to (P',T',\J'^\bullet)$, where $\phi:(P',T') \to (P,T)$ is a morphism in $\Orb$ and $h: \phi^*\J^\bullet\to \J'^\bullet$ is a morphism in $\Sh(T^\C,\O^\bullet)^{\C^\times}$. The composition of two morphisms 
\[
(\phi,h):(P,T,\J^\bullet) \to (P',T',\J'^\bullet) \quad \text{and} \quad (\phi',h'):(P',T',\J'^\bullet) \to (P'',T'',\J''^\bullet)
\]
is the pair consisting of $\phi \circ \phi': (P'',T'') \to (P,T)$ and the composite map
\[
(\phi \circ \phi')^*\J^\bullet \cong \phi'^*(\phi^*\J^\bullet) \xrightarrow{\phi'^*(h)} \phi'^*\J'^\bullet \xrightarrow{h'} \J''^\bullet.
\]
\end{definition}

The canonical projection functor 
\[
\Sh^{\C^\times} \longrightarrow \Orbop
\]
is a cocartesian fibration, which is to say that for each morphism in the base category there is an cocartesian morphism in $\Sh^{\C^\times}$ lying over it. We give the definition of an cocartesian morphism in this context below, followed by an example of an cocartesian morphism lying over an arbitrary morphism in the base category.

\begin{definition}\label{opc}
A morphism $(\phi,g):(P,T,\J^\bullet) \to (P',T',\J'^\bullet)$ in $\Sh^{\C^\times}$ is cocartesian if, given any morphism $(\psi,h): (P,T,\J^\bullet)  \to (P'', T'',\J''^\bullet)$ in $\Sh^{\C^\times}$ and any morphism $\nu: (P'',T'') \to (P',T')$ in $\Orb$ such that $\psi = \phi \circ \nu$, there exists a unique morphism $(\nu,f):(P',T',\J'^\bullet)  \to (P'', T'',\J''^\bullet)$ such that 
\[
\begin{tikzcd}
\ar[rd,"{(\psi,h)}"'] (P,T,\J^\bullet) \ar[r,"{(\phi,g)}"]& (P', T',\J'^\bullet) \ar[d,"{(\nu,f)}"] \\
& (P'',T'',\J''^\bullet)  
\end{tikzcd}
\]
commutes. 
\end{definition}

\begin{example}\label{exo}
Let $\phi: (P,T) \to (P',T')$ be a morphism in $\Orb$ and $\J^\bullet$ be an object in $\Sh(T'^\C,\O^\bullet)^{\C^\times}$. Then $(\phi,\id_{\phi^*\F}): (P',T',\J^\bullet) \to (P,T,\phi^*\J^\bullet)$ is a cocartesian morphism in $\Sh^{\C^\times}$.
\end{example}

\begin{definition}
A subfunctor of $\Hom(\C,-)$ is a functor $S: \Orb \to \CxMfd^{\C^\times}$ sending $(P,T)$ to a $\C^\times$-equivariant open submanifold $S(T) := S(P,T)$ of $T^\C := \Hom(\C,T)$. It sends a morphism $\phi:(P,T) \to (P',T')$ to a morphism of $\C^\times$-equivariant manifolds $S(\phi):S(T) \to S(T')$ such that the diagram
\begin{equation}\label{sdef}
\begin{tikzcd}
S(T) \ar[rr,"S(\phi)"] \ar[d,hook] &&S(T') \ar[d,hook] \\
T^\C \ar[rr,"\phi"] && T'^{\C}
\end{tikzcd}
\end{equation}
commutes in $\CxMfd^{\C^\times}$.
\end{definition}

For a subfunctor $S$ of $\Hom(\C,-)$, there is a category $\Sh_S^{\C^\times}$ whose definition is exactly analogous to the definition of $\Sh^{\C^\times}$, only replacing $T^\C$ with $S(T)$ for each $(P,T)$ in $\Orb$. It is easy to see that the forgetful functor $\Sh_S^{\C^\times} \to \Orbop$ is also a cocartesian fibration by considering morphisms of the kind in Example \ref{exo} above. Furthermore, for each object $(P,T) \in \Orb$, there is a functor  
\[
\Res_{S(T)}: \Sh(T^\C,\O^\bullet)^{\C^\times} \to  \Sh(S(T),\O^\bullet)^{\C^\times}
\]
which sends a sheaf $\J^\bullet$ to its restriction $\J^\bullet_{S(T)}$ along the inclusion $S(T) \subset T^\C$. It may be verified using the diagram \ref{sdef} that 
\[
(\phi^*\J^\bullet)_{S(T)} \cong S(\phi)^*(\J^\bullet_{S(T')})
\]
for each morphism $\phi:(P,T) \to (P',T')$ in $\Orbop$. It follows that the functors $\Res_{S(T)}$ assemble over $\Orb$ to yield a functor $\Res_S$ such that the diagram
\[
\begin{tikzcd}
\Sh^{\C^\times} \ar[rr,"\Res_S"] \ar[rd] & & \Sh_S^{\C^\times} \ar[dl] \\
&\Orbop& 
\end{tikzcd}
\]
commutes.

\begin{example}
The functor $\Fr: \mathrm{Orb}^{\ab}_{\L^2_G} \to \CxMfd^{\C^\times}$ of Definition \ref{subb} is a subfunctor of $\Hom(\C,-)$ and thus determines a restriction functor 
\[
\Res_{\Fr}: \Sh^{\C^\times} \longrightarrow \Sh_{\Fr}^{\C^\times}.
\]
of categories over $\mathrm{Orb}^{\ab,\op}_{\L^2_G}$. 
\end{example}

\section{Complex analytic equivariant cohomology}\label{s5}

Let $T$ be a compact abelian group. In this section we give the definition of complex analytic equivariant cohomology $\H^\bullet_T$ and prove several of its fundamental properties.\footnote{The name {\em complex analytic equivariant cohomology} is due to Rezk \cite{Rezk}, who adapted it from Definition 6.2 in \cite{Spong} to this context.}

\subsection{Definition of $\H^\bullet_T$} 

For a finite $T$-complexes $X$, we begin by considering the graded tensor product of $\O^\bullet_T$ with the Borel $T$-equivariant cohomology ring $H^\bullet_T(X;\Z)$, regarded as a constant sheaf over $T^\C$. The tensor product is to be defined over a graded ring homomorphism that we now describe.

The Borel-equivariant cohomology $H^\bullet(X \times_T ET;\Z)$ of a $T$-space $X$ has the structure of a graded commutative algebra over $H^\bullet(BT;\Z)$ induced by the canonical map $X \times_T ET \to BT$. It is well known that $H^\bullet(BT;\Z)$ is isomorphic to the polynomial ring freely generated by elements $t_1,...,t_d$ in degree two, where $d$ is the rank of $T$. Consider the composite map
\[
H^2(BT;\Z) \cong \Hom(T,\R/\Z) \xrightarrow{\Hom(\C,-)} \Hom_\C(T^\C,(\R/\Z)^\C) \hookrightarrow \O_T^2(T^\C) 
\]
where the isomorphism is the inverse of the map sending a homomorphism $\phi:T \to \R/\Z$ to the first Chern class of the $\R/\Z$-bundle associated to $\phi$. The composite sends $t \in H^2(BT;\Z)$ to the $\C$-linear map $T^\C \to \omega := (\R/\Z)^\C$ induced by the Lie group homomorphism $T \to \R/\Z$ corresponding to $t$. Since $H^\bullet(BT;\Z)$ is freely generated by degree two elements, the composite map determines a homomorphism of graded rings 
\begin{equation}\label{ringmap}
H^\bullet(BT;\Z) \longrightarrow \O^\bullet_T(T^\C)^{\C^\times}.
\end{equation}
Recall the $\C^\times$-action on $\O^\bullet_T(T^\C)$ given by $(\lambda \cdot f)(\gamma) = \lambda^{-j}f(\lambda \cdot \gamma)$ for a holomorphic function $f: T^\C \to \omega^{\otimes j}$. The homomorphism \eqref{ringmap} takes values in the graded subring of $\C^\times$-invariants in $\O^\bullet_T(T^\C)$, because the images of the generators $t \in H^2(BT;\Z)$ are $\C^\times$-invariant (as they are $\C$-linear). We equip $H^\bullet(BT;\Z)$ and $H^\bullet(X \times_T ET;\Z)$ with the trivial $\C^\times$-action, so that the homomorphism \eqref{ringmap} is $\C^\times$-equivariant. From now on, we write $\underline{H}^\bullet(X)$ for the constant sheaf over $T^\C$ with value $H^\bullet(X)$. Thus $\O_T^\bullet$ and $\underline{H}^\bullet_T(X) := \underline{H}^\bullet(X \times_T ET;\Z)$ are both $\C^\times$-equivariant sheaves of graded commutative $\underline{H}^\bullet_T(\pt)$-algebras. We can now give the following definition/proposition.

\begin{proposition}
Let $X$ be a finite $T$-complex. There is a functor 
\[
\F^\bullet_T: \Fin{T}^{\op} \longrightarrow \Sh(T^\C,\O^\bullet)^{\C^\times}
\]
sending $X$ to the tensor product
\[
\F^\bullet_T(X) := \underline{H}_T^\bullet(X) \otimes_{\underline{H}^\bullet_T(\pt)} \O^\bullet_{T}
\]
of $\C^\times$-equivariant graded commutative $\underline{H}^\bullet_T(\pt)$-algebras. The functor is exact, homotopy-invariant and is equipped with the suspension isomorphism induced by Borel equivariant cohomology.
\end{proposition}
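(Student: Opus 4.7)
The plan is to verify each claim by transporting the corresponding property of Borel $T$-equivariant cohomology across the tensor product with $\O^\bullet_T$ over $\underline{H}^\bullet_T(\pt)$. The essential technical input, and the step I expect to be the main obstacle, is the flatness of $\O^\bullet_T$ as a sheaf of graded $\underline{H}^\bullet_T(\pt)$-modules.

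First I would check that $\F^\bullet_T(X)$ lives in the stated category. The constant sheaf $\underline{H}^\bullet_T(X)$ is a $\C^\times$-equivariant graded commutative $\underline{H}^\bullet_T(\pt)$-algebra equipped with the trivial $\C^\times$-action, and the ring homomorphism \eqref{ringmap} is $\C^\times$-equivariant since its image lies in the graded subring of $\C^\times$-invariants of $\O^\bullet_T(T^\C)$; hence the tensor product acquires the structure of a $\C^\times$-equivariant $\O^\bullet_T$-algebra. Naturality in $X$ comes directly from the contravariant functoriality of Borel cohomology on $\Fin{T}$, combined with the functoriality of the tensor product.

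Homotopy invariance and the suspension isomorphism are both consequences of the corresponding features of Borel cohomology; what needs checking is that tensoring with $\O^\bullet_T$ preserves the relevant isomorphisms. A $T$-homotopy induces an isomorphism on $\underline{H}^\bullet_T$, which remains an isomorphism after tensoring. The suspension isomorphism is obtained by tensoring the reduced Borel suspension isomorphism $\widetilde{H}^\bullet_T(\Sigma X) \cong \widetilde{H}^{\bullet -1}_T(X)$ with $\O^\bullet_T$.

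The substantive step is exactness. Given a $T$-cofibration $A \hookrightarrow X$ of finite $T$-CW complexes, Borel cohomology furnishes a long exact sequence of graded $\underline{H}^\bullet_T(\pt)$-modules, and the analogous long exact sequence for $\F^\bullet_T$ follows once flatness of $\O^\bullet_T$ over $\underline{H}^\bullet_T(\pt)$ is established. I would verify flatness stalk-wise at each $\gamma \in T^\C$: the stalk $\O^\bullet_{T,\gamma}$ is, in each graded piece, the local ring $\O_{T^\C, \gamma}$ of germs of holomorphic functions at $\gamma$, twisted by a tensor power of $\omega$. After identifying $H^\bullet(BT; \Z)$ with $\Z[t_1, \ldots, t_d]$ in degree $2\bullet$, the structure map factors as $\Z[t_1, \ldots, t_d] \to \C[t_1, \ldots, t_d] \to \O_{T^\C, \gamma}$; the first arrow is flat because $\C$ is flat over $\Z$, and the second is the classical flatness of the ring of germs of holomorphic functions over the polynomial ring. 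The composition is flat, so tensoring the Borel long exact sequence with $\O^\bullet_T$ preserves exactness and completes the argument.
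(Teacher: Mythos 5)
Your proposal is correct and follows the same overall strategy as the paper: define the functor on morphisms by $f^*\otimes\id$ and transport homotopy invariance, suspension and exactness from Borel cohomology, with the only substantive input being flatness of $\O^\bullet_T$ over $\underline{H}^\bullet_T(\pt)$. The one genuine difference is how that flatness is obtained: the paper simply cites Proposition 2.8 of Rosu for the flatness of the ring map \eqref{ringmap}, whereas you prove it directly, stalkwise, by factoring $\Z[t_1,\dots,t_d]\to\C[t_1,\dots,t_d]\to\O_{T^\C,\gamma}$ and invoking flatness of $\C$ over $\Z$ together with the classical fact that the analytic local ring is (faithfully) flat over the algebraic local ring (so flat over the polynomial ring after localisation). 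That argument is sound and makes the proof self-contained, which is a genuine gain over the citation. One small point you gloss over: the degree-$2k$ piece of the stalk is germs valued in $\omega^{\otimes k}$, and the generators $t_i$ land in degree $2$, so the structure map is not literally $\C[t_i]\to\O_{T^\C,\gamma}$ but $t_i\mapsto \ell_i\,w$ in $\O^\bullet_{T,\gamma}\cong\O_{T^\C,\gamma}[w^{\pm1}]$ after choosing a trivialisation $\omega\cong\C$ with generator $w$; this is handled by the change of variables $\ell_i\mapsto \ell_i w^{-1}$ (an automorphism of $\C[\ell_1,\dots,\ell_d][w^{\pm1}]$), after which your factorisation applies verbatim, so the gap is only cosmetic.
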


\begin{proof}
The functor sends a map $f: X \to Y$ of finite $T$-complexes to the map $f^* \otimes_{\underline{H}^\bullet_T(\pt)} \id$ of  $\C^\times$-equivariant $\O^\bullet_T$-algebras. It is clear that this preserves identity maps and composition. The suspension isomorphism and the properties of exactness and homotopy-invariance are inherited from Borel cohomology, where it follows from Proposition 2.8 in \cite{Rosu} that the ring homomorphism \eqref{ringmap} is flat.
\end{proof}

\begin{remark}
If $C$ is a complete category, then right Kan extension along the inclusion 
\[
\iota_T: \Fin{T}^{\op} \hookrightarrow \Topv{T}^{\op}
\]
of the category of finite $T$-complexes into the category of all $T$-spaces defines a functor
\[
\Ran_{\iota_T}: \Fun(\Fin{T}^{\op}, C) \to \Fun(\Topv{T}^{\op},C).
\]
The value of $\Ran_{\iota_T}(F)$ on a $T$-space $X$ is the limit of the functor
\[
(\Fin{T}_{/X})^{\op} \xrightarrow{forget_X} (\Fin{T})^{\op} \xrightarrow{F} C.
\]
Here $\Fin{T}_{/X}$ denotes the category of pairs $(Y,\alpha)$, where $Y$ is a finite $T$-complex and $\alpha:Y \to X$ is a $T$-equivariant map. A morphism $f: (Y,\alpha) \to (Y',\alpha')$ is a $T$-cellular map $f:Y \to Y'$ such that $\alpha' \circ f = \alpha$. The functor $forget_X$ forgets the map to $X$. The value of $\Ran_{\iota_T}(F)$ on $X$ is given by
\[
\Ran_{\iota_T}(F)(X) = \lim_{\iota_T(Y) \xrightarrow{\alpha} X} F(Y)
\]
where the limit runs over all $(Y,\alpha)$. It will not hurt to drop the reference to $\iota_T$ from this notation in the sequel. The value of $\Ran_{\iota_T}(F)(f)$ on a $T$-map $f:X \to Y$ is the map of limits induced by pullback along the functor
\[
f_*:\Fin{T}_{/X} \to \Fin{T}_{/Y}
\]
given by postcomposition with $f$. 
\end{remark}

\begin{definition}\label{Hdef}
Let $\H^\bullet_T$ be the right (pointwise) Kan extension 
\[
\H^\bullet_T := \Ran_{\iota_T} \F^\bullet_T
\]
of $\F^\bullet_T$ along $\iota_T: \Fin{T}^{\op} \to \Topv{T}^{\op}$. Its value on a $T$-space $X$ is equal to the inverse limit
\[
\H^\bullet_T(X) = \lim_{Y \xrightarrow{\alpha}  X} \underline{H}_T^\bullet(Y_\alpha) \otimes_{\underline{H}^\bullet(BT)} \O^\bullet_T
\]
over all $T$-maps $Y \to X$ from a finite $T$-complex $Y$.
\end{definition}

\begin{remark}
If $X$ is a finite $T$-complex, then $\id_X: X \to X$ is an initial object in $\Fin{T}_{/X}^{\op}$. Therefore, we may identify the functor $\H_T^\bullet \circ \iota_T$ with the functor $\F^\bullet_T$.
\end{remark}

\begin{remark}\label{limopen}
If $S$ is an open $\C^\times$-equivariant submanifold of $T^\C$, then the restriction-to-$S$ functor $\Res_S$ commutes with the limit in Definition \ref{Hdef}. In other words, if $S$ is open, then $\Res_S$ preserves right Kan extension along $\iota_T$:
\[
\Res_S \circ (\Ran_{\iota_T} (\F^\bullet_T)) = \Ran_{\iota_T} (\Res_S \circ \F^\bullet_T).
\]
\end{remark} 

\subsection{Localisation theorem for $\H^\bullet_T$}

In this section, we adapt the proof of Theorem 7.2 in \cite{Spong} to the more general context of this paper. Fix a compact abelian group $T$, an arbitrary $T$-space $X$, and let $S$ be a subset of $T^\C = \Hom(\C,T)$. We write $X^S$ for the union
\[
X^S := \bigcup_{\gamma \in S} X^\gamma  \quad \subset X
\]
where $X^\gamma$ denotes the subspace $X^{\widebar{\gamma(\C)}} \subset X$ of points fixed by the closure of $\gamma(\C)$ in $T$. For any $\gamma \in \Hom(\C,T)$ and any two subsets $S,U$ of $T^\C$, we have that:
\begin{enumerate}[(i)]
\item $X^\gamma = X^{\lambda \cdot \gamma}$ for all $\lambda \in \C^\times$,
\item $(X^S)^U = (X^U)^S = X^U \cap X^S$,
\item $X^{S \cup U} = X^S \cup X^U$, 
\item $(X\times Y)^S \subseteq X^S \times Y^S$, with equality when $S = \{\gamma\}$,
\item $X^S = X$ whenever $S$ contains the trivial homomorphism.
\end{enumerate}
The first property holds because $\lambda$ acts on a homomorphism $\gamma: \C \to T$ by composing with the scalar action $\C \xrightarrow{\bar{\lambda}} \C$, and such an action preserves the image of $\gamma$. The other properties are straightforward to verify. 

Note that the mapping $X \mapsto X^S$ defines functors 
\[
(-)^S: \Topv{T} \to \Topv{T} \qquad \text{and} \qquad (-)^S: \Fin{T} \to \Fin{T}.
\]
In the following lemma, we consider the natural transformation 
\begin{equation}\label{restrict}
\begin{tikzcd}
& \ar[dd,Rightarrow,shorten=8mm,"i^{\op}"] & \\
 \Fin{T}^{\op}   \ar[rr,bend left=5mm,"\id" {xshift=0mm} ] \ar[rr,bend right=5mm,"(-)^S"'{xshift=0mm}] & & \Fin{T}^{\op}  \ar[rr,"\F^\bullet_T"]& & \Sh(T^\C,\O^\bullet)^{\C^\times}\\
& \ar[phantom] & 
\end{tikzcd}
\end{equation}
induced by the inclusion $i_X: X^S \hookrightarrow X$ for each finite $T$-complex $X$.

\begin{lemma}\label{finloc}
Let $S$ be a subset of $T^\C$ and $X$ a finite $T$-complex. The inclusion $i_X: X^S \hookrightarrow X$ induces an isomorphism 
\[
\F^\bullet_T(X)_S \xrightarrow{\sim} \F^\bullet_T(X^S)_S
\]
natural in $X$.
\end{lemma}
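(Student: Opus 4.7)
The plan is to check the asserted sheaf isomorphism on $S$ stalkwise, and then to induct on the $T$-CW structure of $X$, the heart of the argument being an Euler-class-style collapse in the base case. Since $\underline{H}^\bullet_T(Y)$ is a constant sheaf and $\F^\bullet_T(Y)$ is a tensor product of it with $\O^\bullet_T$ over the constant sheaf $\underline{H}^\bullet_T(\pt)$, the stalk at $\gamma\in T^\C$ unfolds as
\[
\F^\bullet_T(Y)_\gamma \cong H^\bullet_T(Y;\Z) \otimes_{H^\bullet(BT;\Z)} \O^\bullet_{T,\gamma},
\]
so for each $\gamma\in S$ it suffices to show that the induced map on these tensor-product stalks is an isomorphism.

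For the base case $X=T/H$ with $H\leq T$ a compact abelian subgroup, I would exploit that $T$ is abelian: the fixed subspace $(T/H)^\gamma$ equals $T/H$ whenever $\gamma(\C)\leq H$, and is empty otherwise. If $\gamma(\C)\leq H$ then $X^\gamma\subset X^S$ forces $X^S=X$, so the stalk map at $\gamma$ is the identity. If instead $\gamma(\C)\not\leq H$, Pontryagin duality on the compact abelian group $T/H$ furnishes a character $\chi\colon T\to\R/\Z$ that vanishes on $H$ but not on $\gamma(\C)$. Regarded as an element of $H^2(BT;\Z)$, this $\chi$ then maps to zero in $H^\bullet(BH;\Z)=H^\bullet_T(T/H;\Z)$, while its image $\chi^\C\in\O^2_T(T^\C)$ (the $\C$-linear function $\gamma'\mapsto\chi\circ\gamma'$) is nonzero at $\gamma$ and hence a unit in the graded stalk $\O^\bullet_{T,\gamma}$ with inverse of degree $-2$. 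The relation $1\otimes\chi^\C=\chi\otimes 1=0$ in the tensor product then collapses $\F^\bullet_T(T/H)_\gamma$ to zero, matching $\F^\bullet_T(\varnothing)_\gamma=0$ on the right.

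For the inductive step I would write $X=X'\cup_{T/H\times S^{n-1}}(T/H\times D^n)$ as the attachment of a single $T$-cell. Using the elementary identities $(A\cup B)^U=A^U\cup B^U$ and $(A\cap B)^U=A^U\cap B^U$ for $T$-subspaces of a common $T$-space, one sees that $(-)^S$ preserves this pushout, so $X^S=(X')^S\cup_{(T/H\times S^{n-1})^S}(T/H\times D^n)^S$. Applying the exact, homotopy-invariant functor $\F^\bullet_T$ to both pushouts yields Mayer--Vietoris long exact sequences of sheaves on $T^\C$, and the inclusions $Y^S\hookrightarrow Y$ assemble into a map of long exact sequences from the $X$-sequence to the $X^S$-sequence. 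By the inductive hypothesis together with $T$-homotopy invariance (which reduces $T/H\times D^n$ to $T/H$), this comparison map is already known to be an isomorphism on the three other terms of the ladder, so the five-lemma completes the induction. Naturality in $X$ is built into the construction of the comparison map.

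The main obstacle is the Euler-class cancellation in the base case: converting the geometric condition $\gamma(\C)\not\leq H$ into the algebraic fact that a carefully chosen character of $T$ simultaneously kills the Borel factor and becomes invertible in $\O^\bullet_{T,\gamma}$. Once that is established, the CW induction is formal. A secondary care-point is verifying that nonzero $\omega$-valued holomorphic functions at a point admit reciprocals in the graded algebra $\O^\bullet_T$, but this follows directly from the fact that $\omega$ is one-dimensional over $\C$.
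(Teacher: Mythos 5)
Your proposal is correct and follows essentially the same route as the paper: reduce to stalks, induct on $T$-cells via Mayer--Vietoris, and in the homogeneous base case $T/H$ kill the stalk using a character of $T$ that vanishes on $H$ but is nontrivial on $\gamma(\C)$, hence becomes a unit in $\O^\bullet_{T,\gamma}$. The only nuance is that in the base case with $\gamma(\C)\not\leq H$ the target may be $\F^\bullet_T(T/H)_\gamma$ rather than $\F^\bullet_T(\varnothing)_\gamma$ (if some other $\gamma'\in S$ satisfies $\overline{\gamma'(\C)}\leq H$), but the same collapse shows both stalks vanish, so the map is still an isomorphism.
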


\begin{proof}
Let $\gamma$ be an arbitrary homomorphism $\C \to T$.  It suffices to show that the inclusion $X^\gamma \subset X$ induces an isomorphism 
\[
\F^\bullet_T(X)_{\gamma} \xrightarrow{\sim}  \F^\bullet_T(X^\gamma)_{\gamma}
\]
of stalks at $\gamma$. Since $\F^\bullet_T$ is a cohomology theory on finite $T$-complexes, a Mayer-Vietoris argument along with an induction on the cells of $X$ allows us to reduce to the case where $X$ is a homogeneous space, say $T/K$ for a closed subgroup $K \leq T$. If $\widebar{\gamma(\C)} \leq K$, then the inclusion map $X^\gamma \subset X$ is the identity, which induces an identity on sheaves. So we may assume that $\widebar{\gamma(\C)} \not\leq K$, which reduces the argument to showing that 
\[
\F^\bullet_T(T/K)_{\gamma} \to \F^\bullet_T(\emptyset)_{\gamma} = 0
\]
is an isomorphism. In other words, it suffices to show that
\[
\F^\bullet_T(T/K)_{\gamma} = \underline{H}^\bullet_T(T/K) \otimes_{\underline{H}^\bullet(BT)} \O^\bullet_{T, \gamma} \cong \underline{H}^\bullet(B K) \otimes_{\underline{H}^\bullet(BT)} \O^\bullet_{T, \gamma} 
\]
is trivial. This follows if there exists a homomorphism in $\Hom(T,\bT) \cong H^2(BT)$ which is invertible in $\O^\bullet_{T, \gamma}$ and whose image in $H^2(B K) \cong \Hom(K,\bT)$ is zero (i.e. a homomorphism which is nontrivial on $\widebar{\gamma(\C)}$ but trivial on $K$). Since $\widebar{\gamma(\C)} \not\leq K$ and $K$ is closed, such a homomorphism must exist. This completes the proof.
\end{proof}

We also use $i$ to denote the natural transformation $(-)^S \hookrightarrow \id$ of functors from $\Topv{T}$ into $\Topv{T}$, analogously to Diagram \eqref{restrict}. 

\begin{theorem}\label{local}
Let $S$ be a $\C^\times$-equivariant open subset of $T^\C$. There is a natural isomorphism 
\[
(\Res_S \circ \H^\bullet_T)(i): \Res_S \circ \H^\bullet_T \xrightarrow{\sim}  \Res_S \circ \H^\bullet_T \circ (-)^S
\] 
whose component on a $T$-space $X$ is the map
\[
(\Res_S\circ \H^\bullet_T)(i_X): \H^\bullet_T(X)_S \xrightarrow{\sim} \H^\bullet_T(X^S)_S
\]
induced by the inclusion $i_X: X^S \hookrightarrow X$.
\end{theorem}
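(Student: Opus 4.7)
The approach will be to leverage the finite-case result (Lemma \ref{finloc}) via the Kan-extension presentation of $\H^\bullet_T$. By Definition \ref{Hdef} and Remark \ref{limopen}, for any $T$-space $X$ we may write
\[
\H^\bullet_T(X)_S = \lim_{(Y,\alpha) \in \Fin{T}_{/X}} \F^\bullet_T(Y)_S, \qquad \H^\bullet_T(X^S)_S = \lim_{(Y',\beta) \in \Fin{T}_{/X^S}} \F^\bullet_T(Y')_S.
\]
The forward map $\alpha_X := (\Res_S \circ \H^\bullet_T)(i_X)$ is the one induced by the universal property using the functor $\Fin{T}_{/X^S} \to \Fin{T}_{/X}$, $(Y',\beta) \mapsto (Y', i_X \circ \beta)$. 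The plan is to build an explicit inverse $\beta_X$ from the isomorphism in Lemma \ref{finloc} and then verify the two are mutually inverse cone-by-cone.

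First, I would note that $(-)^S$ restricts to an endofunctor of $\Fin{T}$: a finite $T$-CW complex $Y$ has only finitely many isotropy subgroups, so $Y^S = \bigcup_\gamma Y^{\widebar{\gamma(\C)}}$ is a finite union of $T$-subcomplexes and hence finite. Since any $T$-map $\alpha:Y\to X$ sends $Y^\gamma$ into $X^\gamma$, restriction defines a functor $(Y,\alpha) \mapsto (Y^S, \alpha|_{Y^S})$ from $\Fin{T}_{/X}$ to $\Fin{T}_{/X^S}$. Composing the projection $\H^\bullet_T(X^S)_S \to \F^\bullet_T(Y^S)_S$ with the inverse of the Lemma \ref{finloc} isomorphism $\F^\bullet_T(Y)_S \xrightarrow{\sim} \F^\bullet_T(Y^S)_S$ yields a candidate component $\H^\bullet_T(X^S)_S \to \F^\bullet_T(Y)_S$. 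These candidates assemble into a cone because, given $f:(Y_1,\alpha_1) \to (Y_2,\alpha_2)$ in $\Fin{T}_{/X}$, its restriction $f|_{Y_1^S}$ lives in $\Fin{T}_{/X^S}$ and the required square commutes by naturality in Lemma \ref{finloc}. The universal property then produces $\beta_X: \H^\bullet_T(X^S)_S \to \H^\bullet_T(X)_S$, natural in $X$ since every ingredient is.

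To verify $\alpha_X \circ \beta_X = \id$ on a compatible family $\{b_{(Y',\beta)}\}$, the critical identity is that, by compatibility under the morphism induced by $(Y')^S \hookrightarrow Y'$, the term $b_{((Y')^S, \beta|_{(Y')^S})}$ is precisely the image of $b_{(Y',\beta)}$ under the Lemma \ref{finloc} isomorphism, so the two isomorphisms cancel and $b_{(Y',\beta)}$ is recovered. The check that $\beta_X \circ \alpha_X = \id$ on a family $\{a_{(Y,\alpha)}\}$ is symmetric, using compatibility with the inclusion $Y^S \hookrightarrow Y$. The main obstacle is primarily the bookkeeping of which pair belongs to which slice category and ensuring naturality of the Lemma \ref{finloc} isomorphism is invoked in the correct direction; once this is set up carefully, both identities follow formally from the universal property of the limits.
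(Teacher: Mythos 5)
Your proposal is correct and takes essentially the same route as the paper: both reduce the statement to Lemma \ref{finloc} via the limit description of the right Kan extension (using Remark \ref{limopen} to commute $\Res_S$ past the limit) and exploit the pair of slice-category functors $(Y,\alpha)\mapsto (Y^S,\alpha|_{Y^S})$ and $i_{X,*}$, with the cancellation coming from compatibility under the inclusions $Y^S\hookrightarrow Y$. The only difference is presentational — the paper produces the inverse by applying $\Ran_{\iota_T}$ to the natural isomorphism of Lemma \ref{finloc} and a naturality square, whereas you write the inverse cone out explicitly and verify the two composites by hand.
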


\begin{proof}
By Lemma \ref{finloc}, there is a natural isomorphism 
\[
(\Res_S \circ \F^\bullet_T)(i): \Res_S \circ \F_T^\bullet \xrightarrow{\sim}  \Res_S \circ \F_T^\bullet \circ (-)^S
\]
whose component on a finite $T$-complex $X$ is the map
\[
(\Res_S \circ \F^\bullet_T) (i_X): \F^\bullet_T(X)_S \xrightarrow{\sim} \F^\bullet_T(X^S)_S
\]
induced by $i_X: X^S \hookrightarrow X$. We apply the right Kan extension functor along 
\[
\iota_T: \Fin{T} \hookrightarrow \Topv{T}
\]
to obtain the natural isomorphism $\Ran_{\iota_T}((\Res_S\circ\F_T^\bullet)(i))$. Naturality of the latter with respect to the inclusion map $i_X: X^S \hookrightarrow X$ says that the diagram 
\[
\begin{tikzcd}
\Ran_{\iota_T} (\Res_S \circ \F^\bullet_T)(X) \ar[rrrrr,"\Ran_{\iota_T}(\Res_S \circ \F^\bullet_T)(i_X)"] \ar[d,"\Ran_{\iota_T}((\Res_S \circ \F^\bullet_T)(i))_{X}","\sim"'] &&&&& \Ran_{\iota_T} (\Res_S \circ \F^\bullet_T)(X^S) \ar[d,"\Ran_{\iota_T}((\Res_S \circ \F^\bullet_T)(i))_{X^S}","\sim"'] \\
\Ran_{\iota_T} (\Res_S \circ \F^\bullet_T \circ (-)^S)(X) \ar[rrrrr,"\Ran_{\iota_T}(\Res_S \circ \F^\bullet_T \circ (-)^S)(i_X)"] &&&&&\Ran_{\iota_T}(\Res_S \circ \F^\bullet_T \circ (-)^S)(X^S)
\end{tikzcd}
\]
commutes. Since $S$ is open, we have
\[
\Ran_{\iota_T}(\Res_S\circ \F^\bullet_T)(i_X) = (\Res_S \circ \Ran_{\iota_T}(\F^\bullet_T))(i_X) = (\Res_S \circ \H^\bullet_T)(i_X)
\]
where the first equality holds by Remark \ref{limopen} and the second by definition of $\H^\bullet_T$. Therefore, it suffices to show that the upper horizontal arrow is an isomorphism. Since the vertical arrows are isomorphisms, this is true if and only if the lower horizontal arrow is an isomorphism. The latter is the map of limits
\[
\lim_{Y \to X} \F^\bullet_T(Y^S)_S \to \lim_{Y \to X^S} \F^\bullet_T(Y^S)_S
\]
induced by pullback along the functor $i_{X,*}: \Fin{T}_{/X^S} \to \Fin{T}_{/X}$ which is, in turn, induced by the inclusion $i_X: X^S \hookrightarrow X$. To see that this is an isomorphism, note that the functors
\[
\Fin{T}_{/X} \xrightarrow{(-)^S} \Fin{T}_{/X^S} \xrightarrow{i_{X,*}} \Fin{T}_{/X}  \xrightarrow{ \F^\bullet_T(-^S)_S} \Sh(S)
\]
and  
\[
\Fin{T}_{/X}  \xrightarrow{ \F^\bullet_T(-^S)_S} \Sh(S)
\]
are equal. Therefore, the map of limits that is induced by pullback along $i_{X,*} \circ (-)^S$ is the identity. In the same way, we see that the pullback along $(-)^S \circ i_{X,*}$ also induces the identity on limits. The map of limits that is induced by pullback along $i_{X,*}$ is therefore an isomorphism. This completes the proof.
\end{proof}

\subsection{Properties of $\H^\bullet_T$}

In this subsection we use the localisation theorems to show that the restriction of $\H^\bullet_T(X)$ to a $\C^\times$-equivariant open submanifold $S \subset T^\C$ behaves well on a certain subcategory $\Topv{T}(S) \subset \Topv{T}$ depending on $S$. In particular, we show that it is a cohomology theory on this category. Furthermore, we show that there is a localisation theorem on stalks.

\begin{definition}\label{localfin}
Let $S \subset T^\C$ be a $\C^\times$-equivariant open submanifold and let $X$ be a $T$-space. We say that $X$ is {\em locally finite over $S$} if, for each $\gamma \in S$:
\begin{enumerate}[(i)]
\item there is a $\C^\times$-equivariant open neighbourhood $U \subset S$ of $\gamma$ such that $X^U = X^\gamma$ (as subspaces of $X$), and 
\item $X^\gamma$ has the $T$-homotopy type of a finite $T$-complex. 
\end{enumerate}
We write $\Topv{T}(S) \subset \Topv{T}$ for the full subcategory on spaces locally finite over $S$.
\end{definition}

\begin{proposition}\label{stalks}
Let $X$ be locally finite over $S$ and let $\gamma \in S$. The inclusion $X^\gamma \hookrightarrow X$ of the subspace of points fixed by $\widebar{\gamma(\C)}$ induces an isomorphism
\[
\H^\bullet_T(X)_\gamma \xrightarrow{\sim} \H^\bullet_T(X^\gamma)_\gamma 
\]
of stalks at $\gamma$.
\end{proposition}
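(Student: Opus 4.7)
The plan is to use condition (i) of local finiteness to reduce the claim to Theorem \ref{local} applied on a small $\C^\times$-equivariant open neighbourhood of $\gamma$, and then pass to stalks.

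First, I would invoke condition (i) of Definition \ref{localfin} to choose a $\C^\times$-equivariant open neighbourhood $U$ of $\gamma$ contained in $S$ (hence also open in $T^\C$) such that $X^U = X^\gamma$ as $T$-subspaces of $X$. This is the whole point of condition (i): it turns a stalk-level question, which is not directly amenable to Theorem \ref{local}, into an open-subset question by replacing the single homomorphism $\gamma$ with a $\C^\times$-equivariant open neighbourhood on which the fixed-point subspace is constant.

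Next, I would apply Theorem \ref{local} with the open set $U$ in place of $S$ to obtain a natural isomorphism
\[
(\Res_U \circ \H^\bullet_T)(i_X)\colon \H^\bullet_T(X)_U \xrightarrow{\sim} \H^\bullet_T(X^U)_U
\]
induced by the inclusion $i_X \colon X^U \hookrightarrow X$ of $T$-spaces. Using the identification $X^U = X^\gamma$ from the first step, the right-hand side is equal to $\H^\bullet_T(X^\gamma)_U$ and the map is induced by the inclusion $X^\gamma \hookrightarrow X$.

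Finally, I would take stalks at $\gamma \in U$ on both sides. Since for any sheaf $\J$ on $T^\C$ and any open neighbourhood $U$ of $\gamma$ the stalk $(\J_U)_\gamma$ coincides canonically with $\J_\gamma$, this yields the desired isomorphism $\H^\bullet_T(X)_\gamma \xrightarrow{\sim} \H^\bullet_T(X^\gamma)_\gamma$ induced by $X^\gamma \hookrightarrow X$. I do not expect any substantial obstacle here: the proof is essentially a one-step application of Theorem \ref{local}, and condition (ii) of local finiteness is not needed in this argument (it is presumably used elsewhere, for instance in obtaining exactness or a more concrete description of the stalk via $\F^\bullet_T$ applied to a finite model of $X^\gamma$).
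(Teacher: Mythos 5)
Your proof is correct and is essentially the same as the paper's: choose a $\C^\times$-equivariant open $U \subset S$ with $X^U = X^\gamma$ via condition (i), apply Theorem \ref{local} over $U$, and take stalks at $\gamma$. Your remark that condition (ii) is not needed here is also accurate; the paper's proof likewise uses only condition (i).
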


\begin{proof}
Choose a $\C^\times$-equivariant open neighbourhood $U \subset S$ of $\gamma$ such that $X^U = X^\gamma$. We have 
\[
\H^\bullet_T(X)_\gamma \cong \H^\bullet_T(X^U)_\gamma = \H^\bullet_T(X^\gamma)_\gamma
\] 
where the first map is the isomorphism of Proposition \ref{local} after taking stalks at $\gamma \in S$.
\end{proof}

\begin{definition}
We write $\widetilde{\H}^\bullet_T$ for the reduced version of $\H^\bullet_T$, which is a functor from the category $\Topv{T}_*$ of pointed $T$-spaces into the category $\Sh(T^\C,\O^\bullet\text{-}\mathrm{mod})^{\C^\times}$ of $\C^\times$-equivariant $\O^\bullet_T$-modules. Its value on a pointed $T$-space $X$ is the kernel of the map $\H^\bullet_T(X) \to \H^\bullet_T(\pt)$ induced by the inclusion of the basepoint. 
\end{definition}

We may identify $\widetilde{\H}^\bullet_T$ with the right Kan extension of $\widetilde{\F}^\bullet_T$ along
\[
\iota_{T,*}: \Fin{T}_* \hookrightarrow \Topv{T}_*
\]
since, for a pointed $T$-space $X$, the right Kan extension is a limit over $\Fin{T}_{/X}$ and this operation commutes with taking the kernel.

\begin{notation}
Let $\Topv{T}(S)_* \subset \Topv{T}_*$ be the full subcategory on those spaces whose underlying unpointed space is locally finite over $S$.
\end{notation}

If $X \in \Topv{T}(S)_*$, then the map of Proposition \ref{stalks} induces a natural isomorphism
\[
\widetilde{\H}^\bullet_T(X)_\gamma \xrightarrow{\sim} \widetilde{\H}^\bullet_T(X^\gamma)_\gamma 
\]
of stalks at $\gamma \in S$. 

\begin{proposition}
The composite functor
\[
\Topv{T}(S)_* \hookrightarrow \Topv{T}_* \xrightarrow{\widetilde{\H}^\bullet_T} \Sh(T^\C,\O^\bullet\text{-}\mathrm{mod})^{\C^\times} \xrightarrow{\Res_S} \Sh(S,\O^\bullet\text{-}\mathrm{mod})^{\C^\times} 
\]
takes cofibre sequences to exact sequences.
\end{proposition}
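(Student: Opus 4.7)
My plan is to check exactness stalkwise, use Proposition~\ref{stalks} to replace each stalk with the stalk of the corresponding fixed-point subspace, and conclude using the exactness of $\widetilde{\F}^\bullet_T$ on finite pointed $T$-complexes.

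First, since exactness of sequences of $\C^\times$-equivariant sheaves of $\O^\bullet$-modules over $S$ can be tested stalkwise, given a cofibre sequence $A \hookrightarrow X \to X/A$ in $\Topv{T}(S)_*$ it suffices to show that for every $\gamma \in S$ the induced sequence of stalks at $\gamma$ fits into a long exact sequence. Local finiteness of each of $A$, $X$, and $X/A$ over $S$ lets me invoke the reduced form of Proposition~\ref{stalks} (noted immediately after its statement) to obtain, for each $Y \in \{A,X,X/A\}$, a natural isomorphism $\widetilde{\H}^\bullet_T(Y)_\gamma \xrightarrow{\sim} \widetilde{\H}^\bullet_T(Y^\gamma)_\gamma$ induced by inclusion. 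By naturality, the task reduces to showing that
\[
\widetilde{\H}^\bullet_T((X/A)^\gamma)_\gamma \to \widetilde{\H}^\bullet_T(X^\gamma)_\gamma \to \widetilde{\H}^\bullet_T(A^\gamma)_\gamma
\]
is part of a long exact sequence.

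Next, I would verify that the $\widebar{\gamma(\C)}$-fixed-point functor preserves cofibre sequences of pointed $T$-spaces: a closed $T$-cofibration $A \hookrightarrow X$ with basepoint in $A$ restricts to a closed cofibration $A^\gamma \hookrightarrow X^\gamma$, and collapsing $A$ to the basepoint in $X$ restricts on $\widebar{\gamma(\C)}$-fixed points to collapsing $A^\gamma$ to the basepoint in $X^\gamma$, so that $(X/A)^\gamma = X^\gamma/A^\gamma$. This produces a cofibre sequence of pointed $T$-spaces, each of which, by the local finiteness hypothesis, is $T$-homotopy equivalent to a finite pointed $T$-complex.

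Finally, on finite pointed $T$-complexes the functor $\widetilde{\H}^\bullet_T$ coincides with $\widetilde{\F}^\bullet_T$ (since the identity is the initial object in $\Fin{T}_{/Y}$), and $\widetilde{\F}^\bullet_T$ is $T$-homotopy invariant and takes cofibre sequences to long exact sequences via flatness of the ring homomorphism~\eqref{ringmap}. The resulting long exact sequence of $\C^\times$-equivariant $\O^\bullet_T$-modules remains exact after applying the (exact) stalk functor at $\gamma$, completing the argument. The principal obstacle I anticipate lies in step three, namely justifying the identification $(X/A)^{\widebar{\gamma(\C)}} = X^{\widebar{\gamma(\C)}}/A^{\widebar{\gamma(\C)}}$ together with the cofibrancy of the induced inclusion; for the classes of spaces of interest here (in particular the mapping spaces $\Map_G(P,X)$ with $X$ a finite $G$-CW complex, handled in Section~\ref{locally}) these point-set hypotheses are routine, but they are the one place where genuine care is required.
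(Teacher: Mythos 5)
Your proof is correct and follows essentially the same route as the paper's: test exactness stalkwise, use the reduced form of Proposition~\ref{stalks} to replace each stalk by the stalk over the corresponding $\gamma$-fixed-point space, and conclude from exactness of $\widetilde{\F}^\bullet_T$ on (spaces $T$-homotopy equivalent to) finite $T$-complexes. The only difference is that you make explicit, and rightly flag, the point-set step $(X/A)^{\gamma} = X^{\gamma}/A^{\gamma}$ together with cofibrancy of $A^\gamma \hookrightarrow X^\gamma$, which the paper's proof uses silently when it asserts that $X^\gamma \to Y^\gamma \to Z^\gamma$ is again a cofibre sequence.
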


\begin{proof}
Let $X \to Y \to Z$ be a cofibre sequence in $\Topv{T}(S)_*$ and let $\gamma \in S$. Consider the induced sequence 
\[
\widetilde{\H}^\bullet_T(Z)_S \to \widetilde{\H}^\bullet_T(Y)_S \to \widetilde{\H}^\bullet_T(X)_S
\]
in $\Sh(S,\O^\bullet\text{-}\mathrm{mod})^{\C^\times}$. By Proposition \ref{stalks} the induced sequence of stalks at $\gamma$ is isomorphic to the sequence
\[
\widetilde{\F}^\bullet_T(Z^\gamma)_\gamma \to \widetilde{\F}^\bullet_T(Y^\gamma)_\gamma \to \widetilde{\F}^\bullet_T(X^\gamma)_\gamma
\]
induced by the cofibre sequence $X^\gamma \to Y^\gamma \to Z^\gamma$. Since $\F^\bullet_T$ is a cohomology theory on finite $T$-complexes, the latter sequence is exact. This completes the proof.
\end{proof}

\begin{proposition}\label{suspension}
The suspension map of Borel cohomology induces a natural transformation
\[
\widetilde{\H}^{\bullet+1}_T(S^1 \wedge X) \rightarrow \widetilde{\H}^{\bullet}_T(X)
\]
of functors from $\Topv{T}_*$ into the category of $\C^{\times}$-equivariant $\O^\bullet_T$-modules. It is a natural isomorphism on the subcategory $\Topv{T}(S)$ after restriction to $S \subset T^\C$.
\end{proposition}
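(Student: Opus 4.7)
The plan is to first construct the natural transformation on the subcategory $\Fin{T}_*$ of finite pointed $T$-complexes, and then induce the transformation on all of $\Topv{T}_*$ via the right Kan extension construction that defines $\widetilde{\H}^\bullet_T$. On a finite $T$-complex $Y$, the suspension isomorphism of reduced Borel cohomology yields a natural isomorphism $\widetilde{H}^{\bullet+1}_T(S^1 \wedge Y) \cong \widetilde{H}^{\bullet}_T(Y)$; tensoring with $\O^\bullet_T$ over $\underline{H}^\bullet_T(\pt)$ produces an isomorphism $\widetilde{\F}^{\bullet+1}_T(S^1 \wedge Y) \cong \widetilde{\F}^\bullet_T(Y)$ of $\C^\times$-equivariant $\O^\bullet_T$-modules. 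For an arbitrary pointed $T$-space $X$, the functor $S^1 \wedge -$ induces a functor $\Fin{T}_{*,/X} \to \Fin{T}_{*,/S^1 \wedge X}$ sending $(Y,\alpha)$ to $(S^1 \wedge Y, \id_{S^1} \wedge \alpha)$, and pullback of limits along this functor produces a natural map
\[
\widetilde{\H}^{\bullet+1}_T(S^1 \wedge X) = \lim_{Z \to S^1 \wedge X} \widetilde{\F}^{\bullet+1}_T(Z) \longrightarrow \lim_{Y \to X} \widetilde{\F}^{\bullet+1}_T(S^1 \wedge Y) \cong \lim_{Y \to X} \widetilde{\F}^{\bullet}_T(Y) = \widetilde{\H}^\bullet_T(X),
\]
where the middle isomorphism uses the finite-complex suspension isomorphism levelwise. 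Naturality in $X$ is immediate from naturality of each ingredient.

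For the isomorphism claim, first observe that if $X$ lies in $\Topv{T}(S)_*$, then so does $S^1 \wedge X$. Indeed, since $T$ acts trivially on $S^1$, fixed-point functors commute with smashing: $(S^1 \wedge X)^\gamma = S^1 \wedge X^\gamma$ and $(S^1 \wedge X)^U = S^1 \wedge X^U$ for any subset $U \subset T^\C$. Thus a $\C^\times$-equivariant open neighbourhood $U$ of $\gamma$ witnessing $X^U = X^\gamma$ also witnesses $(S^1 \wedge X)^U = (S^1 \wedge X)^\gamma$, and the $T$-homotopy equivalence between $X^\gamma$ and a finite $T$-complex $K$ gives a $T$-homotopy equivalence $S^1 \wedge X^\gamma \simeq S^1 \wedge K$ between $(S^1 \wedge X)^\gamma$ and a finite $T$-complex.

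It therefore suffices to check that the constructed map is an isomorphism of stalks at each $\gamma \in S$. By Proposition \ref{stalks} applied both to $X$ and to $S^1 \wedge X$, the stalks fit into a commutative square
\[
\begin{tikzcd}
\widetilde{\H}^{\bullet+1}_T(S^1 \wedge X)_\gamma \ar[r] \ar[d,"\sim"'] & \widetilde{\H}^{\bullet}_T(X)_\gamma \ar[d,"\sim"] \\
\widetilde{\F}^{\bullet+1}_T(S^1 \wedge X^\gamma)_\gamma \ar[r] & \widetilde{\F}^{\bullet}_T(X^\gamma)_\gamma
\end{tikzcd}
\]
in which the vertical arrows are the localisation isomorphisms (using $(S^1 \wedge X)^\gamma = S^1 \wedge X^\gamma$ and that $X^\gamma$ has the $T$-homotopy type of a finite $T$-complex so that $\widetilde{\H}^\bullet_T$ and $\widetilde{\F}^\bullet_T$ agree there) and the bottom arrow is the suspension isomorphism for $\widetilde{\F}^\bullet_T$ on finite complexes. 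The main obstacle is verifying the commutativity of this square, which I expect to follow from chasing the definitions: the top arrow was constructed precisely to be compatible with the suspension maps used to define $\F^\bullet_T$, and the localisation isomorphisms are induced by the inclusions of fixed-point subspaces, which commute with smashing with $S^1$. With commutativity in hand, the bottom arrow being an isomorphism forces the top arrow to be one as well, completing the proof.
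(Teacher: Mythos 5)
Your proposal is correct and takes essentially the same route as the paper: the transformation is the one determined on finite complexes by the Borel suspension isomorphism (you construct it via the explicit limit description of the right Kan extension, the paper via its universal property---these give the same map), and the isomorphism claim is verified stalkwise at each $\gamma \in S$ using Proposition \ref{stalks} together with $(S^1 \wedge X)^\gamma = S^1 \wedge X^\gamma$ and the finiteness of $X^\gamma$ up to $T$-homotopy, with your explicit check that $S^1 \wedge X$ is again locally finite over $S$ being a point the paper leaves implicit. The commutativity of the square that you defer to a definition chase is in fact immediate from naturality of your transformation applied to the inclusions $X^\gamma \hookrightarrow X$ and $S^1 \wedge X^\gamma \hookrightarrow S^1 \wedge X$, since the vertical localisation isomorphisms are induced by precisely these inclusions.
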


\begin{proof}
Consider the diagram
\[
\begin{tikzcd}
\Sh(T^\C,\O^\bullet\text{-}\mathrm{mod})^{\C^\times} \\
\\
\\
\Fin{T}_* \ar[rrr,"\iota_{T,*}"] \ar[uuu,"\widetilde{\F}^\bullet_T"{name=A}] &&& \Topv{T}_* \ar[llluuu,bend left=5mm,"\widetilde{\H}^{\bullet+1}_T  (S^1 \wedge -)"'{name=B}]  \ar[llluuu,bend right=10mm,"\widetilde{\H}^\bullet_T"'{name=C}]
\ar[Rightarrow,from=B,to=A,"\alpha",shorten=8mm,yshift=-2mm,"\sim"'] \ar[Rightarrow,from=A,to=C,"\beta",xshift=2mm,shorten=15mm]
\end{tikzcd}
\]
The arrow $\alpha$ is the natural transformation whose component on a finite $T$-complex $Y$ is the map
\[
\widetilde{\underline{H}}^{\bullet+1}_T(S^1\wedge Y) \otimes_{\underline{H}^\bullet(BT)} \O^\bullet_T  \to \widetilde{\underline{H}}^{\bullet}_T(Y) \otimes_{\underline{H}^\bullet(BT)} \O^\bullet_T
\]
induced by the natural suspension isomorphism of Borel $T$-equivariant cohomology. The arrow $\beta$ is the unique one making the diagram commute, which exists by the universal property of the Kan extension $\H^\bullet_T$ (recalling that $\tilde{\H}^\bullet_T  \circ \iota_{T,*}= \tilde{\F}^\bullet_T$). The arrow $\beta$ is the natural transformation whose existence is asserted in the proposition. It remains to show that $\beta$ is an isomorphism over $S \subset T^\C$. Let $X$ be an arbitrary $T$-space. Locally at $\gamma \in S$, the component of the arrow $\beta$ at $X$ has as its source 
\[
\widetilde{\H}^{\bullet + 1}_T(S^1\wedge X)_\gamma \cong \widetilde{\H}^{\bullet + 1}_T((S^1\wedge X)^\gamma)_\gamma \cong  \widetilde{\H}^{\bullet + 1}_T(S^1\wedge X^\gamma)_\gamma
\]
where we have used Proposition \ref{stalks} and the fact that $(S^1\wedge X)^\gamma  = S^1\wedge X^\gamma$. Since $X^\gamma$ is finite up to $T$-homotopy, on this space $\beta$ is determined by its restriction along $\iota_{T,*}$, where it is equal to $\alpha$, which is an isomorphism. This completes the proof.
\end{proof}

\subsection{Examples and computations}

\begin{example}
If $X$ is a space with the action of the trivial group $T = e$, then $\H^\bullet_T(X)$ is the sheaf over $T^\C = \{0\}$ with stalk given by the inverse limit 
\[
\lim_{Y_\alpha \to X} H^\bullet_T(Y_\alpha) \otimes_{\Z} \C^\bullet
\]
over all $T$-maps $Y_\alpha \rightarrow{} X$ from finite CW-complexes $Y_\alpha$ into $X$. Here $\C^\bullet = \oplus_{n \in \Z} \C$ is a $\Z$-algebra via the ring map given by the inclusion of $\Z$ into the degree zero part of $\C^\bullet$.
\end{example}

\begin{example}
Let $X$ be a space with trivial $T$-action. Then any map $Y \rightarrow{} X$ from a finite $T$-complex $Y$ into $X$ factors through the quotient $Y \to Y/T \to X$, which is a finite CW-complex. Therefore, $\H^\bullet_T(X)$ may be identified with the inverse limit  
\[
 \lim_{Y_\alpha \to X}\underline{H}^\bullet(Y_\alpha) \otimes_{\underline{\Z}} \O^\bullet_T
\]
over all maps $Y_\alpha \rightarrow{} X$ from finite (non-equivariant) CW-complexes $Y_\alpha$ into $X$. 
\end{example}

\begin{example} 
Let $X \to X/T$ be a principal $T$-bundle. Since the $T$-action is free, we have
\[
X^\gamma = \begin{cases} X &\text{if  } \gamma = 0 \\ \emptyset &\text{else.  } \end{cases}
\]
So, by applying Proposition \ref{local} with $S = T^\C \setminus \{0\}$ we see that $\H_T^\bullet(X)$ is supported over $0 \in T^\C$. We also have
\begin{align*}
\H^\bullet_T(X) &=  \lim_{Y_\alpha \to X} \underline{H}_T^\bullet(Y_\alpha) \otimes_{\underline{H}^\bullet(BT)} \O^\bullet_T\\
&\cong \lim_{Y_\alpha \to X} \underline{H}^\bullet(Y_\alpha/T) \otimes_{\underline{H}^\bullet(BT)} \O^\bullet_T\\
&\cong \lim_{Z_\alpha \to X/T} \underline{H}^\bullet(Z_\alpha) \otimes_{\underline{H}^\bullet(BT)} \O^\bullet_T.
\end{align*}
In the second line we have used the homotopy equivalence $Y \times_T ET \simeq Y/T$ for a free finite $T$-complex $Y$, and the $H^\bullet(BT)$-algebra structure is induced by the map $Y/T \to BT$ classifying $Y \to Y/T$ as a principal $T$-bundle. In the third line the $H^\bullet(BT)$-algebra structure is induced by the composite $Z \to X/T \to BT$ for a finite, non-equivariant CW-complex $Z$ over $X/T$, where the second map classifies the principal $T$-bundle $X \to X/T$. 
\end{example}

The following calculation shows that $\H_T^\bullet$ is not a Borel-equivariant cohomology theory. 

\begin{proposition}
The sheaf $\H_{S^1}^\bullet(E S^1)$ is supported at $0 \in (S^1)^\C = \C$, where its stalk is the formal completion 
\[
\H_{S^1}^\bullet(E S^1)_0 \cong \H^\bullet_{S^1}(\pt)^{\wedge}_0
\]
of the ring $\H^\bullet_{S^1}(\pt)_0 = \O^\bullet_{S^1,0}$ of holomorphic germs at $0$.
\end{proposition}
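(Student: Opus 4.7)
The plan is to argue in two stages: first that $\H^\bullet_{S^1}(ES^1)$ is supported at $0 \in (S^1)^\C = \C$, and then that its stalk at $0$ is the formal completion of the local ring $\O^\bullet_{S^1,0}$. Since the $S^1$-action on $ES^1$ is free, $(ES^1)^\gamma = \emptyset$ for every nonzero $\gamma$. Applying Theorem \ref{local} with the $\C^\times$-equivariant open set $S = \C \setminus \{0\}$ yields $\H^\bullet_{S^1}(ES^1)|_{\C \setminus \{0\}} \cong \H^\bullet_{S^1}(\emptyset)|_{\C \setminus \{0\}} = 0$, proving support at the origin.

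For the stalk, I would write $ES^1 = \bigcup_n S^{2n+1}$ as the standard increasing union of odd spheres, with equivariant inclusions $\iota_n : S^{2n+1} \hookrightarrow ES^1$, and argue that $\{(S^{2n+1}, \iota_n)\}_n$ is cofinal in the slice category $\Fin{S^1}_{/ES^1}$ that indexes the Kan-extension limit of Definition \ref{Hdef}. Concretely, any equivariant map $\alpha : Y \to ES^1$ from a finite $S^1$-CW complex has compact image, hence factors through $S^{2N+1}$ for some $N$; this factorization is unique for each $n \geq N$ because $\iota_n$ is injective, and the factorizations are compatible under the inclusions $S^{2n+1} \hookrightarrow S^{2n+3}$. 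This makes the relevant comma categories non-empty and connected, so the limit reduces to
\[
\H^\bullet_{S^1}(ES^1)_0 \cong \lim_n \F^\bullet_{S^1}(S^{2n+1})_0.
\]
Since $S^{2n+1}$ is a free $S^1$-complex with quotient $\mathbb{CP}^n$, one has $H^\bullet_{S^1}(S^{2n+1};\Z) \cong \Z[t]/(t^{n+1})$ and therefore $\F^\bullet_{S^1}(S^{2n+1})_0 \cong \O^\bullet_{S^1,0}/(t^{n+1})$.

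To identify this inverse limit with the formal completion, I would use that under the structure homomorphism (\ref{ringmap}) the generator $t \in H^2(BS^1) \cong \Hom(S^1, \R/\Z)$ is sent to the identity $\C$-linear map $(S^1)^\C \to \omega$, which is a holomorphic coordinate on $\C$ vanishing at $0$. Hence $(t) \subset \O^\bullet_{S^1,0}$ is the maximal ideal, and
\[
\lim_n \O^\bullet_{S^1,0}/(t^{n+1}) = (\O^\bullet_{S^1,0})^\wedge_0 = \H^\bullet_{S^1}(\pt)^\wedge_0,
\]
which is the desired identification. The main obstacle is the cofinality step: establishing \emph{connectedness} (not just non-emptiness) of the comma categories is what ensures the inclusion of the sphere subsystem is initial and therefore induces an isomorphism on limits. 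The injectivity of each $\iota_n$ is essential here, since it forces any two factorizations of a given $\alpha: Y \to ES^1$ to agree after enlarging $n$; without this, one would have to pass to a homotopy-theoretic argument using the homotopy invariance of $\F^\bullet_T$.
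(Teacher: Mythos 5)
Your proposal is correct and follows essentially the same route as the paper: support at $0$ comes from the localisation theorem applied to the free action, and the stalk is computed by a cofinal exhaustion giving $\lim_n \O^\bullet_{S^1,0}/(t^{n+1})$, i.e.\ the completion. The only (cosmetic) difference is that the paper first invokes its preceding example to rewrite the limit over finite non-equivariant complexes mapping to $BS^1=\C P^\infty$ and then uses the cofinal system $\C P^1 \subset \C P^2 \subset \cdots$, whereas you stay equivariant over $ES^1$ with the odd spheres $S^{2n+1}$ (the same computation, since $S^{2n+1}/S^1=\C P^n$), and you spell out the cofinality step that the paper asserts without proof.
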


\begin{proof}
Since $ES^1$ is a free $S^1$-space, it has support at $0 \in (S^1)^\C = \C$.  By the previous example, we have
\[
\H_{S^1}^\bullet(E S^1)_0 \cong \left( \lim_{Y_\alpha \to BS^1} H^\bullet(Y_\alpha) \otimes_{H^\bullet(BS^1)}  \O_{S^1}^\bullet \right)_0.
\]
The inverse limit may be computed over the subdiagram of finite CW-complexes
\[
\C P^1 \hookrightarrow \C P^2 \hookrightarrow  ... \subset \C P^{\infty}  = BS^1.
\]
We have
\begin{align*}
\left(\lim_{n} H^\bullet(\C P^n) \otimes_{H^\bullet(\C P^\infty)}  \O_{S^1}^\bullet \right)_0 &\cong \left(\lim_{n} \Z[t]/t^{n+1} \otimes_{\Z[t]} \O^\bullet_{S^1}\right)_0 \\
&\cong (\lim_{n} \O^\bullet_{S^1}/t^{n+1})_0 \\
&\cong \ \O^{\bullet,\wedge}_{S^1,0} \ = \ \H^\bullet_{S^1}(\pt)^{\wedge}_0.
\end{align*}
\end{proof}

\begin{proposition}\label{orbit}
Let $j: \Hom(\C,K) \hookrightarrow \Hom(\C,T)$ be the map induced by the inclusion of a closed subgroup $K \leq T$. There is an isomorphism
\[
j^* \H^\bullet_T(T/K) \cong \O^\bullet_K
\]
in $\Sh(K^\C,\O^\bullet)^{\C^\times}$.
\end{proposition}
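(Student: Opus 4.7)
The plan is to identify $\H^\bullet_T(T/K)$ as an explicit tensor product, pull it back along $j$, and then verify that the pullback collapses to $\O^\bullet_K$.

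First, $T/K$ is a compact smooth $T$-manifold, hence admits a finite $T$-CW structure, so $\H^\bullet_T(T/K) = \F^\bullet_T(T/K) = \underline{H}_T^\bullet(T/K) \otimes_{\underline{H}^\bullet(BT)} \O^\bullet_T$. The Borel construction identifies $T/K \times_T ET \simeq BK$, so $\underline{H}_T^\bullet(T/K) \cong \underline{H}^\bullet(BK)$ as sheaves of graded $\underline{H}^\bullet(BT)$-algebras on $T^\C$. Since $j^* \O^\bullet_T \cong \O^\bullet_K$ in $\Sh(K^\C,\O^\bullet)^{\C^\times}$ (from the definition of pullback in the category of $\O^\bullet$-algebras) and $j^*$ preserves colimits (such as tensor products) as well as constant sheaves, one obtains
\[
j^* \H^\bullet_T(T/K) \;\cong\; \underline{H}^\bullet(BK) \otimes_{\underline{H}^\bullet(BT)} \O^\bullet_K.
\]
The structure morphism is the canonical $\O^\bullet_K$-algebra map $\O^\bullet_K \to \underline{H}^\bullet(BK) \otimes_{\underline{H}^\bullet(BT)} \O^\bullet_K$ sending $f \mapsto 1 \otimes f$, and it remains to show this is an isomorphism.

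Second, I unwind the induced $H^\bullet(BT;\Z)$-module structure on $\O^\bullet_K$. A character $\chi \in H^2(BT;\Z) \cong \Hom(T,\R/\Z)$ acts by multiplication by the holomorphic function $K^\C \to \omega$, $\gamma \mapsto \chi \circ \gamma$. Since $\C$ is connected, any $\gamma: \C \to K$ factors through the identity component $K^0$; hence $K^\C = (K^0)^\C$ and the action depends only on $\chi|_{K^0}$, vanishing identically whenever $\chi|_{K^0} = 0$. Let $I \subset H^\bullet(BT;\Z)$ be the ideal generated by such characters. Every compact abelian Lie group splits as $K \cong K^0 \times \pi_0(K)$ (because $\mathrm{Ext}^1(\pi_0(K), K^0) = 0$ for $K^0$ a divisible torus), so by Künneth the torsion-free quotient of $H^\bullet(BK;\Z)$ is $H^\bullet(BK^0;\Z) \cong H^\bullet(BT;\Z)/I$. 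Since $\O^\bullet_K$ is a $\C$-algebra, the torsion part of $\underline{H}^\bullet(BK)$ is annihilated in the tensor product, so
\[
\underline{H}^\bullet(BK) \otimes_{\underline{H}^\bullet(BT)} \O^\bullet_K \;\cong\; (H^\bullet(BT)/I) \otimes_{H^\bullet(BT)} \O^\bullet_K \;\cong\; \O^\bullet_K/(I \cdot \O^\bullet_K) \;=\; \O^\bullet_K,
\]
the last equality using the vanishing of the $I$-action. The $\C^\times$-equivariance and naturality of these identifications are immediate from the definitions.

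The main obstacle is bookkeeping when $K$ is disconnected: the map $H^\bullet(BT;\Z) \to H^\bullet(BK;\Z)$ fails to be surjective and $H^\bullet(BK;\Z)$ carries torsion classes with no direct counterpart in $\O^\bullet_K$. The resolution is the two structural observations that $K^\C = (K^0)^\C$ and that $\O^\bullet_K$ is torsion-free over $\Z$, which together reduce everything to the connected case handled by killing the ideal $I$.
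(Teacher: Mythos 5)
Your proposal is correct, and its first half is exactly the paper's argument: identify $\H^\bullet_T(T/K)$ with $\F^\bullet_T(T/K)$ since $T/K$ is a finite $T$-complex, use $T/K\times_T ET\simeq BK$, and pull back to get $j^*\H^\bullet_T(T/K)\cong \underline{H}^\bullet(BK)\otimes_{\underline{H}^\bullet(BT)}\O^\bullet_K$. Where you diverge is the finishing move. The paper disposes of the tensor product in one stroke by a base change: it observes (implicitly) that the structure map $\underline{H}^\bullet(BT)\to j^*\O^\bullet_T$ factors through $\underline{H}^\bullet(BK)$, rewrites the tensor as $\underline{H}^\bullet(BK)\otimes_{\underline{H}^\bullet(BK)}j^*\O^\bullet_T$, and cancels. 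You instead compute the tensor product directly: torsion in $H^\bullet(BK;\Z)$ dies because $\O^\bullet_K$ is a $\Q$-algebra, homomorphisms $\C\to K$ land in $K^0$ so the ideal $I$ of characters trivial on $K^0$ acts by zero, and the torsion-free quotient $H^\bullet(BK^0)$ is $H^\bullet(BT)/I$, whence the collapse to $\O^\bullet_K$. Your route is longer but buys something: it does not presuppose a ring map $H^\bullet(BK;\Z)\to\O^\bullet_K$ for possibly disconnected $K$ (the factorization the paper's cancellation step silently uses), and your torsion/ideal analysis is precisely what justifies that factorization. One caveat: your identification $H^\bullet(BK^0;\Z)\cong H^\bullet(BT;\Z)/I$ uses that $H^\bullet(BT;\Z)$ is polynomial on $\Hom(T,\R/\Z)$, i.e.\ that $T$ is connected, whereas the proposition (and its use in Proposition \ref{isom}) allows $T$ to be any compact abelian group; for disconnected $T$ you would additionally quotient $H^\bullet(BT;\Z)$ by its torsion (which likewise dies in the tensor) before comparing with $H^\bullet(BK^0;\Z)$. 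This is a one-sentence repair, and the paper makes the same implicit connectedness assumption in setting up the map \eqref{ringmap}, so it does not count against you as a genuine gap.
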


\begin{proof}
We have isomorphisms
\[
j^* \H^\bullet_T(T/K) \cong j^{-1} \underline{H}^\bullet_T(T/K) \otimes_{j^{-1}\underline{H}^\bullet(BT)} j^* \O^\bullet_T \cong  \underline{H}^\bullet(BK) \otimes_{\underline{H}^\bullet(BK)} j^* \O^\bullet_T \cong j^* \O^\bullet_T \cong \O^\bullet_K
\]
in $\Sh(K^\C,\O^\bullet)^{\C^\times}$. The first holds by definition and because $T/K$ is a finite $T$-complex, the second uses that $T/K \times_T ET$ is a model for $BK$, and the third and fourth are the canonical isomorphisms.
\end{proof}

\section{Equivariant elliptic cohomology}\label{s6}

The goal of this section is to give the definition of $G$-equivariant elliptic cohomology $\Ell_G^\bullet$ and to prove some of its basic properties. To this end, let $\G$ be a groupoid enriched over $\Top$ and consider $\Top$ as a category enriched over itself. Let $\Topv{\G}$ be the category $\Fun(\G^{\op},\Top)$ of enriched functors with natural transformations as morphisms. We will think of this as the category of ``$\G$-spaces". The main ingredient in this section is a certain functor 
\[
\H^{\bullet}_\G: \Topv{\G}^{\op} \times \Orbop \longrightarrow \Sh^{\C^\times}.
\]
We produce $\Ell_G^\bullet$ from this functor essentially by setting $\G = \L^2_G$. 

\subsection{Functoriality of $\H^\bullet_\G$ in $\Orb$}

We will construct $\H^{\bullet}_\G$ by regarding it as a functor
\[
\H^{\bullet}_\G: \Orbop \longrightarrow \Fun( \Topv{\G}^{\op},\Sh^{\C^\times}).
\]
In fact, we construct a family of such functors $^S\H^{\bullet}_\G$, indexed over all subfunctors $S$ of $\Hom(\C,-)$. This is necessary in order to formulate certain localisation properties of $\Ell_G^\bullet$. Then we will set 
\[
\H^{\bullet}_\G := \ ^{\Hom(\C,-)} \H^{\bullet}_\G.
\]

\begin{remark}\label{inddef}
For each homomorphism $\phi:T \to T'$ of compact abelian groups there is a functor
\[
\ind_\phi: \Topv{T} \to \Topv{T'} \quad \text{where} \quad \ind_\phi Y := Y \times_\phi T' := Y \times T'/(yt,t') \sim (y,\phi(t)t').
\]

If $Y$ has a (finite) $T$-complex structure, then there is a canonical (finite) $T'$-complex structure on $\ind_\phi Y$. In fact, $\ind_\phi$ restricts to a functor $\ind_\phi: \Fin{T} \to \Fin{T'}$.
\end{remark}

\begin{definition}
Let $S$ be a subfunctor of $\Hom(\C,-)$ and let $(P,T)$ be an object in $\Orb$. We write $ev^S_{P,T}$ for the functor 
\[
ev^{S}_{P,T}:\Topv{\G} \to \Topv{T}
\]
sending a topological functor $F$ to the $T$-space $F(P)^{S(T)}$ and a natural transformation $f:F \to F'$ to the $T$-equivariant map 
\[
ev^{S}_{P,T}(f): F(P)^{S(T)} \to F'(P)^{S(T)}
\]
induced by the component $f_P: F(P) \to F'(P)$ of $f$ at $P$. 
\end{definition}

\begin{proposition}\label{functor}
Let $S$ be a subfunctor of $\Hom(\C,-)$. There is a functor  
\[
^S\H^{\bullet}_\G: \Orbop \longrightarrow \Fun(\Topv{\G}^{\op}, \Sh^{\C^\times})
\]
sending an object $(P,T)$ to the composite functor 
\[
\Topv{\G}^{\op} \xrightarrow{ev_{P,T}^{S,\op}} \Topv{T}^{\op} \xrightarrow{\H^\bullet_T} \Sh(T^\C,\O^\bullet)^{\C^\times} \hookrightarrow \Sh^{\C^\times}
\]
which sends $F$ to $(P,T,\H^\bullet_T(F(P)^{S(T)}))$. 
\end{proposition}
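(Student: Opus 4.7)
The plan is to construct the functor via its action on objects and morphisms, and then verify the routine axioms. On objects the formula is dictated by the statement; one checks that for fixed $(P,T)$ the assignment $F \mapsto (P,T,\H^\bullet_T(F(P)^{S(T)}))$ is a functor on $\Topv{\G}^{\op}$, because for $f: F \to F'$ in $\Topv{\G}$ the component $f_P: F(P) \to F'(P)$ restricts $T$-equivariantly to the $S(T)$-fixed loci (equivariant maps preserve fixed loci for each $\widebar{\gamma(\C)}$) and descends through $\H^\bullet_T$ to a morphism in $\Sh^{\C^\times}$ lying over $\id_{(P,T)}$. The substance is the construction on a morphism $\phi:(P,T) \to (P',T')$ of $\Orb$. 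For each $F$ I need a morphism $(\phi, h_F)$ in $\Sh^{\C^\times}$, which amounts to producing $h_F: \phi^*\H^\bullet_{T'}(F(P')^{S(T')}) \to \H^\bullet_T(F(P)^{S(T)})$ in $\Sh(T^\C,\O^\bullet)^{\C^\times}$. I would pick a representative $g: P' \to P$ of $\phi$ via Proposition~\ref{yoneda}, form $F(g): F(P) \to F(P')$, note that it is $T$-equivariant for the $T$-action on the target pulled back along $\phi: T \to T'$, and check that it restricts to a $T$-equivariant map $F(P)^{S(T)} \to F(P')^{S(T')}$: for $x \in F(P)^\gamma$ with $\gamma \in S(T)$, the element $F(g)(x)$ is fixed by $\phi\circ\gamma$, and $\phi\circ\gamma = S(\phi)(\gamma)$ lies in $S(T')$ by the subfunctor axiom \eqref{sdef}. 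I would then set $h_F := \H^\bullet_T(F(g)) \circ \kappa$, where $\kappa: \phi^*\H^\bullet_{T'}(Y) \to \H^\bullet_T(Y_\phi)$ is a canonical comparison map, valid for any $T'$-space $Y$ (writing $Y_\phi$ for $Y$ with $T$-action pulled back via $\phi$), assembled from the comparison $H^\bullet_{T'}(Y) \to H^\bullet_T(Y_\phi)$ induced by the $T$-equivariant map $E\phi: ET \to ET'$ together with the sheaf map $\phi^*\O^\bullet_{T'} \to \O^\bullet_T$ arising from $\phi: T^\C \to T'^\C$, and extended from finite complexes to all $T'$-spaces via the universal property of the right Kan extension defining $\H^\bullet_{T'}$.

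The main obstacle is showing that $h_F$ does not depend on the choice of $g$. Two representatives differ by $g' = g \circ t'$ for some $t' \in T'$, so $F(g') = F(t') \circ F(g)$ and hence $\H^\bullet_T(F(g')) \circ \kappa = \H^\bullet_T(F(g)) \circ \H^\bullet_T(F(t')) \circ \kappa$. By naturality of $\kappa$ in its $T'$-space argument, $\H^\bullet_T(F(t')) \circ \kappa = \kappa \circ \phi^*\H^\bullet_{T'}(F(t'))$, so it suffices to show $\H^\bullet_{T'}(F(t'))$ equals the identity. At the Borel level this is the standard fact that translation by an element $t' \in T'$ acts trivially on $H^\bullet_{T'}(Y)$ for any $T'$-space $Y$: the induced self-map $[y,e] \mapsto [t'y, e]$ of $Y \times_{T'} ET'$ agrees with $[y,e] \mapsto [y, t'^{-1}\cdot e]$, and $e \mapsto t'^{-1}\cdot e$ is a $T'$-equivariant self-map of $ET'$, hence $T'$-equivariantly homotopic to the identity by the universal property of $ET'$ as a terminal object up to $T'$-homotopy in the category of free $T'$-CW complexes. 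Passage from the Borel statement to one about $\H^\bullet_{T'}$ follows from the Kan extension description.

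Naturality of $(\phi, h_F)$ in $F$ follows from naturality of $\kappa$ in its input together with the identity $F(g)\circ f_P = f_{P'} \circ F(g)$ for $f: F\to F'$ in $\Topv{\G}$. Compatibility with composition $\phi' \circ \phi$ in $\Orb$ is obtained by choosing representatives $g: P' \to P$ and $g': P'' \to P'$, whose composition $g \circ g': P'' \to P$ represents $\phi' \circ \phi$; one uses $F(g \circ g') = F(g') \circ F(g)$ and the compatibility of $\kappa$ under composition of homomorphisms $T \to T' \to T''$ of compact abelian groups, which itself reduces to the compatibility of $E(-)$ and $(-)^\C$ under composition. Identities are preserved because $g$ may be taken to be $\id_P$, under which both $F(g)$ and $\kappa$ are the identity. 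This completes the construction of the functor.
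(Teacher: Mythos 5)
Your construction is, in substance, the paper's proof transposed through the adjunction $\ind_\phi \dashv \res_\phi$: where you restrict $F(P')^{S(T')}$ along $\phi$ and compose with a base-change comparison $\kappa$, the paper induces $F(P)^{S(T)}$ up to a $T'$-space and composes $\phi^*\H^\bullet_{T'}(\mu^S_{g,F})$ with a transformation $\xi_\phi\colon \phi^*\circ\H^\bullet_{T'}\circ\ind_\phi \Rightarrow \H^\bullet_T$; your $\kappa$ is the mate of $\xi_\phi$ under this adjunction, the finite-level input is the same (the Borel comparison together with the canonical map $\phi^*\O^\bullet_{T'}\to\O^\bullet_T$), your independence-of-$g$ argument (translation by $t'\in T'$ is $T'$-homotopically trivial on $ET'$, hence acts trivially on Borel cohomology) is exactly the paper's, and the two total composites $h_F$ agree by the triangle identities. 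What each packaging buys: your restriction-based formulation makes naturality in $F$ and independence of $g$ very transparent, while the paper's induction-based formulation makes the passage from finite complexes to all spaces and the composition check mechanical, for the reason explained next.

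The one step that does not work as stated is the extension of $\kappa$ ``via the universal property of the right Kan extension defining $\H^\bullet_{T'}$'': that universal property produces maps \emph{into} $\H^\bullet_{T'}$, whereas $\kappa$ has $\phi^*\H^\bullet_{T'}$ as its \emph{source}. The correct device is the pointwise-limit description of the \emph{target}: for a $T'$-space $Y$ one maps $\phi^*\H^\bullet_{T'}(Y)$ into $\H^\bullet_T(\res_\phi Y)=\lim_{Z\to\res_\phi Y}\F^\bullet_T(Z)$ by assigning to each finite $T$-complex $Z\to\res_\phi Y$ its adjoint $\ind_\phi Z\to Y$ (a finite $T'$-complex), projecting $\H^\bullet_{T'}(Y)\to\F^\bullet_{T'}(\ind_\phi Z)$, applying $\phi^*$ and the finite-level comparison, and checking compatibility of the cone; note that $\res_\phi$ of a finite $T'$-complex is not given as a finite $T$-complex, so induction genuinely enters here, which is precisely why the paper phrases everything through $\ind_\phi$ and defines $\xi_\phi$ by the universal property of $\H^\bullet_T=\Ran_{\iota_T}\F^\bullet_T$. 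Relatedly, your composition check (``compatibility of $\kappa$ under composition of $T\to T'\to T''$'') is where the real work lies: on non-finite spaces it should be run through the uniqueness clause of that same universal property (as the paper does with $\nu_{\phi'\circ\phi}$), not merely through compatibility of $E(-)$ and $(-)^\C$, which only controls the finite level. With these repairs your argument is equivalent to the paper's.
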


\begin{proof}

Let $\phi$ be a morphism $(P,T) \to (P',T')$ in $\Orb$. We define $^S\H^\bullet_\G(\phi)$ to be the pair $(\phi,h^S_\G(\phi))$, where $h^S_\G(\phi)$ is the composite of the natural transformations displayed in the diagram 
\begin{equation}\label{nat1}
\begin{tikzcd}
\Topv{\G}^{\op} \ar[rr,"ev^{S,\op}_{P,T}"] \ar[drr,"ev^{S,\op}_{P',T'}"'{name=c},bend right=5mm] \ar[Rightarrow,from=c,to=Z,shorten=3mm,blueish,"\mu^{S,\op}_g"]&&|[alias=Z]| \Topv{T}^{\op} \ar[rr,"\H^\bullet_T"{name=a}]\ar[d,"\ind_\phi"']&&|[alias=Y]| \Sh(T^\C) \ar[rr,hook] && \Sh^{\C^\times} \\
&& \Topv{T'}^{\op} \ar[rr,"\H^\bullet_{T'}"{name=b}] && \Sh(T'^\C)\ar[u,"\phi^*"] \ar[urr,hook,bend right=5mm,""{name=d}] & \ar[Rightarrow,from=b,to=a,shorten=2mm,xshift=0mm,yshift=-1mm,blueish,"\xi_\phi"]\ar[Rightarrow,blueish,lu,shorten=5mm,yshift=2mm,""']&
\end{tikzcd}
\end{equation}
The component of $h^S_\G(\phi)$ on a given $F \in \Topv{\G}$ is thus a composite of maps
\[
(\phi^*\circ \H^\bullet_{T'})(\mu^{S,\op}_{g,F}): \phi^*\H^\bullet_{T'}(F(P')^{S(T')}) \to \phi^*\H^\bullet_{T'}(\ind_\phi (F(P)^{S(T)}))
\]
and 
\[
\xi_{\phi,F(P)^{S(T)}}: \phi^*\H^\bullet_{T'}(\ind_\phi (F(P)^{S(T)})) \to \H^\bullet_T(F(P)^{S(T)}).
\]
We may ignore the natural transformation on the right of Diagram \eqref{nat1}, since the sheaf component of this natural transformation is just the identity. Namely, it sends a sheaf $\J^\bullet$ in $\Sh(T'^\C)$ to the morphism 
\[
(\phi,\id_{\phi^*\J^\bullet}): (P',T',\J^\bullet) \to (P,T,\phi^*\J^\bullet)
\]
in $\Sh^{\C^\times}$. 

We begin by describing the natural transformation $\mu^S_g$, which depends on a choice of morphism $g:P' \to P$ in $\G$ representing $\phi$. Given a topological functor $F: \G^{\op} \to \Top$, the map $F(g):F(P) \to F(P')$ induces a $\phi$-equivariant map  
\[
F(g)^{S(\phi)}:F(P)^{S(T)} \to F(P')^{S(T')}
\]
of fixed point subspaces. The component of $\mu^S_g$ on $F$ is defined as the canonical $T'$-map 
\[
\mu^S_{g,F}:\ind_\phi (F(P)^{S(T)}) \to F(P')^{S(T')}
\]
associated to $F(g)^{S(\phi)}$. To see that $\mu^S_{g,F}$ is natural in $F$, we observe that the diagram
\[
\begin{tikzcd}
\ind_\phi (F(P)^{S(T)}) \ar[r,"\mu^S_{g,F}"] \ar[d,"\ind_\phi (ev^S_{P,T}(f))"'] & F(P')^{S(T')} \ar[d,"ev^S_{P',T'}(f)"] \\ 
\ind_\phi (F'(P)^{S(T)}) \ar[r,"\mu^S_{g,F'}"] & F'(P')^{S(T')} 
\end{tikzcd}
\]
in $\Topv{T'}$ commutes for each morphism $f: F \to F'$ in $\Topv{\G}$, by the naturality of $f_P$ in $P$. Note that $\H^\bullet_{T'}(\mu^S_g)$ does not depend on the choice of $g$ representing $\phi$ for the following reason. If $g$ and $g'$ both represent $\phi$, then there is $t\in T'$ such that $g' = gt$, so that $g$ and $g'$ induce the same map after taking the homotopy quotient by $T'$ (multiplication by $t$ is homotopic to the identity in $ET'$).

The natural transformation $\xi_\phi$ is defined in the following way. Consider the diagram
\[
\begin{tikzcd}
\Sh(T^\C) && \ar[ll,"\phi^*"'] \Sh(T'^\C) & \\
\\
\Fin{T}^{\op} \ar[uu,color=blueish,"\F^\bullet_T"{name=C}] \ar[rr,color=blueish,"\ind_{\phi}"'xshift=2mm] \ar[rd,hook,"\iota_T"'] && \Fin{T'}^{\op}  \ar[rd,hook,"\iota_{T'}"'] \ar[uu,color=blueish,"\F^\bullet_{T'}"{name=D}]& \\
& \Topv{T}^{\op} \ar[luuu,crossing over,color=red,"\H^\bullet_T"'{name=a}] \ar[rr,color=red,"\ind_{\phi}"']&& \Topv{T'}^{\op}  \ar[luuu,color=red,"\H^\bullet_{T'}"'{name=b}]
\ar[Rightarrow,from=b,to=a,color=red,shorten=15mm,"\xi_\phi"',xshift=-5mm] \ar[Rightarrow,from=D,to=C,color=blueish,shorten=15mm,"\nu_\phi"',yshift=2mm,"\sim"]
\end{tikzcd}
\]
The component of the natural transformation 
\[
\nu_\phi: \phi^* \circ \F^\bullet_{T'} \circ \ind_\phi \to \F^\bullet_T
\]
on a finite $T$-complex $X$ is the map 
\begin{align*}
\nu_{\phi,X}: \phi^*\F^\bullet_{T'}(\ind_\phi X ) = \phi^{-1}\underline{H}^\bullet_{T'}(\ind_\phi X) \otimes_{\phi^{-1}\underline{H}^\bullet(BT')} \phi^*\O^\bullet_{T'} \xrightarrow{\sim} \underline{H}^\bullet_T(X) \otimes_{\underline{H}^\bullet(BT)} \O^\bullet_{T} = \F^\bullet_T(X)
\end{align*}
induced by the canonical equivalence $X_{hT} \xrightarrow{\sim}  (\ind_\phi X)_{hT'}$ over $BT'$ and the canonical isomorphism $\phi^* \O^\bullet_{T'} \xrightarrow{\sim} \O^\bullet_{T}$. The latter two maps induce a map on the tensor product since, when $X$ is a point, $BT \to BT'$ induces a map $\phi^{-1}\underline{H}^\bullet(BT') \to \underline{H}^\bullet(BT)$ which extends to the canonical map $\phi^* \O^\bullet_{T'} \xrightarrow{\sim} \O^\bullet_{T}$ of sheaves. Thus $\nu_\phi$ is clearly an isomorphism and natural in $X$.

Note that all squares and triangles in the diagram commute, except for the two squares containing $\nu_\phi$ and $\xi_\phi$ respectively. The source of the natural isomorphism $\nu_\phi$ is therefore 
\[
\phi^* \circ \F^\bullet_{T'} \circ \ind_\phi  = \phi^* \circ \H^\bullet_{T'} \circ \iota_{T'} \circ \ind_\phi = \phi^* \circ \H^\bullet_{T'} \circ \ind_\phi \circ \iota_{T}. 
\]
Recall that $\H^\bullet_T$ is the right Kan extension of $\F^\bullet_T$ along $\iota_T$, and that $\H^\bullet_T \circ \iota_T = \F^\bullet_T$. By the universal property of right Kan extension of , there is a unique natural transformation 
\[
\chi: \phi^* \circ \H^\bullet_{T'} \circ \ind_\phi \to \H^\bullet_T
\]
such that the left whiskering $\chi \iota_T$ by $\iota_T$ is equal to the natural transformation $\nu_\phi$. We set $\xi_\phi$ to be this natural transformation $\chi$.

We now check that composition is preserved. Let 
\[
\phi:(P,T) \to (P',T') \quad \text{and} \quad \phi': (P',T') \to (P'',T'')
\]
be morphisms in $\Orb$. By the rule for composition in the category $\Sh^{\C^\times}$, the composite of $^S\H^\bullet_\G(\phi)$ with $^S\H^\bullet_\G(\phi')$ is the pair consisting of $\phi' \circ \phi$ and the composite of the natural transformations in the diagram

\begin{equation}\label{nat3}
\begin{tikzcd}
&& |[alias=Z]| \Topv{T}^{\op} \ar[rr,"\H^\bullet_T"{name=c}] \ar[d,"\ind_\phi"] && \Sh(T^\C)  \\
\Topv{\G}^{\op} \ar[urr,"ev^{S,\op}_{P,T}",bend left=5mm] \ar[drr,"ev^{S,\op}_{P'',T''}"'{name=g},bend right=5mm] \ar[rr,"ev^{S,\op}_{P',T'}"{name=b}]  &&|[alias=Y]| \Topv{T'}^{\op} \ar[d,"\ind_{\phi'}"] \ar[rr,"\H^\bullet_{T'}"{name=e}]&&\Sh(T'^\C)  \ar[u,"\phi^*"']  \\
&& \Topv{T''}^{\op} \ar[rr,"\H^\bullet_{T''}"{name=f}]  && \Sh(T''^\C)\ar[u,"\phi'^*"']  \ar[uu,bend right=30mm,"(\phi'\circ \phi)^*"'{name=a}] \ar[Rightarrow,blueish, from=a, to=u,shorten=2mm,xshift=-1mm,"\sim"']  \ar[Rightarrow,blueish,from=f,to=e,shorten=2mm,"\xi_{\phi'}"'{yshift=0
mm},xshift=0mm]  \ar[Rightarrow,blueish,from=e,to=c,shorten=2mm,"\xi_\phi"'{yshift=0mm},xshift=0mm] \ar[Rightarrow,blueish,from=b,to=Z,shorten=3mm,"\mu^{S,\op}_g",yshift=-1ex]\ar[Rightarrow,blueish,from=g,to=Y,shorten=5mm,"\mu^{S,\op}_{g'}",xshift=1mm,yshift=-1mm]
\end{tikzcd}
\end{equation}
We treat this diagram in two parts. The first part is
\[
\begin{tikzcd}
&& |[alias=Z]|  \Topv{T}^{\op} \ar[d,"\ind_\phi"]  \ar[dd,bend left=30mm,"\ind_{\phi'\circ \phi}"{name=a}]  \\
\Topv{\G}^{\op} \ar[urr,"ev^{S,\op}_{P,T}",bend left=5mm] \ar[drr,"ev^{S,\op}_{P'',T''}"'{name=g},bend right=5mm] \ar[rr,"ev^{S,\op}_{P',T'}"{name=b}] && |[alias=X]| \Topv{T'}^{\op} \ar[d,"\ind_{\phi'}"] \\
&& \Topv{T''}^{\op} \ar[Rightarrow,blueish, from=X, to=a,shorten=2mm,"\sim"] 
\ar[Rightarrow,blueish,from=b,to=Z,shorten=2mm,"\mu^{S,\op}_g",yshift=-1ex]\ar[Rightarrow,blueish,from=g,to=X,shorten=5mm,"\mu^{S,\op}_{g'}",xshift=1mm,yshift=-1mm]
\end{tikzcd}
\]
We need to show that the composite of these natural transformations is equal to 
\[
\mu_{g\circ g'}^{S,\op}: ev^{S,\op}_{P'',T''} \longrightarrow \ind_{\phi' \circ \phi} \circ ev^{S,\op}_{P,T}.
\] 
Let $F \in \Topv{\G}^{\op}$. It suffices to show that the diagram
\[
\begin{tikzcd}
\ind_{\phi'\circ \phi} (F(P)^{S(T)})  & \ar[l,"\sim"] \ind_{\phi'}(\ind_\phi (F(P)^{S(T)}) \\
F(P'')^{S(T'')} \ar[u,"\mu_{g\circ g'}^{S,\op}"] \ar[r,"\mu_{g'}^{S,\op}"]& \ind_{\phi'} (F(P')^{S(T')})\ar[u,"\ind_{\phi'}(\mu_g^{S,\op})"]  .
\end{tikzcd}
\]
commutes in $\Topv{T''}^{\op}$, which may be checked using the definition of $\mu_g^S$.

The second part of Diagram \eqref{nat3} is
\begin{equation}\label{nat5}
\begin{tikzcd}
\ar[dd,bend right=30mm,"\ind_{\phi'\circ \phi}"'{name=b}]  \Topv{T}^{\op} \ar[rr,"\H^\bullet_T"{name=c}] \ar[d,"\ind_\phi"] && \Sh(T^\C)  \\
 |[alias=X]|\Topv{T'}^{\op} \ar[d,"\ind_{\phi'}"] \ar[rr,"\H^\bullet_{T'}"{name=e}]&&\Sh(T'^\C)  \ar[u,"\phi^*"']  \\
\Topv{T''}^{\op} \ar[rr,"\H^\bullet_{T''}"{name=f}]  && \Sh(T''^\C)\ar[u,"\phi'^*"']  \ar[uu,bend right=30mm,"(\phi'\circ \phi)^*"'{name=a}] \ar[Rightarrow,blueish, from=a, to=u,shorten=2mm,xshift=-1mm,"\sim"']  \ar[Rightarrow,blueish,from=f,to=e,shorten=2mm,"\xi_{\phi'}"'{yshift=0mm},xshift=0mm]  \ar[Rightarrow,blueish,from=e,to=c,shorten=2mm,"\xi_\phi"'{yshift=0mm},xshift=0mm]  \ar[Rightarrow,blueish, from=b, to=X,shorten=2mm,"\sim"] 
\end{tikzcd}
\end{equation}
We need to show that the composite of these natural transformations is equal to 
\[
\xi_{\phi'\circ \phi}: (\phi' \circ \phi)^* \circ \H^\bullet_{T''} \circ \ind_{\phi'\circ \phi} \longrightarrow \H^\bullet_T.
\]
To this end, we consider the diagram
\[
\begin{tikzcd}
\Sh(T^\C) && |[alias=Z]|\Sh(T'^\C) \ar[ll,"\phi^*"'] &&  \ar[ll,"\phi'^*"'] \Sh(T''^\C) \ar[llll,bend right=7mm,"(\phi'\circ \phi)^*"'{name=G}] \\
\\
\Fin{T}^{\op} \ar[uu,color=blueish,"\F^\bullet_T"{name=C}] \ar[rr,color=blueish,"\ind_{\phi}"'xshift=2mm] \ar[rd,hook,"\iota_T"'] && \Fin{T'}^{\op}  \ar[rr,color=blueish,"\ind_{\phi'}"'xshift=2mm] \ar[rd,hook,"\iota_{T'}"'] \ar[uu,color=blueish,"\F^\bullet_{T'}"{name=D}]&& \Fin{T''}^{\op}  \ar[rd,hook,"\iota_{T''}"']  \ar[uu,color=blueish,"\F^\bullet_{T''}"{name=E}]& \\
& \Topv{T}^{\op} \ar[rrrr,bend right=7mm,"\ind_{\phi'\circ \phi}"'{name=H}]\ar[luuu,crossing over,color=red,"\H^\bullet_T"'{name=a}] \ar[rr,color=red,"\ind_{\phi}"']&&|[alias=Y]|\Topv{T'}^{\op}  \ar[luuu,crossing over,color=red,"\H^\bullet_{T'}"'{name=b}] \ar[rr,color=red,"\ind_{\phi'}"']&& \Topv{T''}^{\op}    \ar[luuu,color=red,"\H^\bullet_{T''}"'{name=F}] 
\ar[Rightarrow,from=b,to=a,color=red,shorten=15mm,"\xi_\phi"',xshift=-5mm] \ar[Rightarrow,from=D,to=C,color=blueish,shorten=15mm,"\nu_\phi"',yshift=2mm,"\sim"]
\ar[Rightarrow,from=F,to=b,color=red,shorten=15mm,"\xi_{\phi'}"',xshift=-5mm] \ar[Rightarrow,from=E,to=D,color=blueish,shorten=15mm,"\nu_{\phi'}"',yshift=2mm,"\sim"]
\ar[Rightarrow,from=G,to=Z,"\sim",shorten=2mm]
\ar[Rightarrow,from=H,to=Y,"\sim",shorten=2mm]
\end{tikzcd}
\]
If $\nu_{\phi' \circ \phi}$ is equal to the composite of the natural transformations in all square and semicircular panels (except for those containing $\xi_\phi$ and $\xi_{\phi'}$ respectively), then it follows that the composite in Diagram \eqref{nat5} is equal to $\xi_{\phi'\circ \phi}$, by universal property the right Kan extension. One shows this by taking a finite $T$-complex $X$ and using the definition of $\nu_{\phi}$ to check that the diagram
\[
\begin{tikzcd}
(\phi'\circ \phi)^{-1}\underline{H}^\bullet_{T''}(\ind_{\phi'\circ \phi} X) \otimes_{(\phi'\circ \phi)^{-1}\underline{H}^\bullet(BT'')} (\phi'\circ \phi)^* \O^\bullet_{T''} \ar[r, "\sim"',"\nu_{\phi'\circ \phi}"] \ar[d,"\sim"] & \underline{H}^\bullet_T(X) \otimes_{\underline{H}^\bullet(BT)} \O^\bullet_T \\
\phi^{-1}(\phi'^{-1}\underline{H}^\bullet_{T''}(\ind_{\phi'}(\ind_\phi X))) \otimes_{ \phi^{-1}(\phi'^{-1}\underline{H}^\bullet(BT''))} \phi^*(\phi'^* \O^\bullet_{T''}) \ar[r, "\phi^*(\nu_{\phi'})", "\sim"'] & \phi^{-1}\underline{H}^\bullet_{T'}(\ind_\phi X) \otimes_{\phi^{-1}\underline{H}^\bullet(BT')} \phi^*\O^\bullet_{T'} \ar[u,"\nu_{\phi}","\sim"'] 
\end{tikzcd}
\]
commutes, where the vertical map on the left is induced by the composite of natural isomorphisms $\ind_{\phi'\circ \phi} \to \ind_{\phi'}\circ \ind_\phi$ and $(\phi' \circ \phi)^* \to \phi^*\circ \phi'^*$. It follows that the functor $^S\H^\bullet_\G$ preserves composition.
\end{proof}

\begin{notation}
We will continue to use the notation established in Proposition \ref{functor} in the sequel, and we drop the $S$ from all such notation whenever the subfunctor $S \subset \Hom(\C,-)$ is equal to $\Hom(\C,-)$. We will also use $\H^\bullet_\G$ to denote the associated functor 
\[
\H^\bullet_\G: \Topv{\G}^{\op} \longrightarrow \Gamma(\Orbop,\Sh^{\C^\times}).
\]
\end{notation}

\begin{proposition}\label{natural}
There is a natural transformation
\[
i^*_{P,T,F}:\H^\bullet_\G \longrightarrow \,^S\H^{\bullet}_\G
\]
given by
\[
\begin{tikzcd}
& \ar[phantom] & \\
\Topv{\G}^{\op}  \ar[rrr,bend left=7mm,"ev_{P,T}^{S}" ] \ar[rrr,bend right=7mm,"ev_{P,T}"'] & && \Topv{T}^{\op} \ar[r,"\H_T^\bullet"] &\Sh(T^\C,\O^\bullet)^{\C^\times}\ar[r,hook]& \Sh^{\C^\times}.\\
& \ar[uu,Rightarrow,blueish,shorten=7mm,"i_{P,T,F}^{\op}"'{xshift=1mm},xshift=1mm] & 
\end{tikzcd}
\]
where $i_{P,T,F}: F(P)^{S(T)} \hookrightarrow F(P)^{T^\C} = F(P)$ is the inclusion map. 
\end{proposition}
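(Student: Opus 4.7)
The component of $i^*$ at a triple $(P,T,F)$ will be the morphism in $\Sh^{\C^\times}$ whose $\Orb$-component is $\id_{(P,T)}$ and whose sheaf-component is the map
\[
\H^\bullet_T(i_{P,T,F}) : \H^\bullet_T(F(P)) \longrightarrow \H^\bullet_T(F(P)^{S(T)})
\]
obtained by applying the contravariant functor $\H^\bullet_T$ to the $T$-equivariant inclusion $i_{P,T,F}$. The first step is to verify naturality in $F \in \Topv{\G}$: for any morphism $f: F \to F'$ of topological functors, the component $f_P : F(P) \to F'(P)$ is $T$-equivariant and restricts to a $T$-equivariant map $F(P)^{S(T)} \to F'(P)^{S(T)}$, giving a commutative square of $T$-spaces; applying $\H^\bullet_T$ gives the required square of sheaves.

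The main task is naturality in $(P,T)$. Fix a morphism $\phi:(P,T)\to(P',T')$ in $\Orb$ represented by $g:P'\to P$ in $\G$. The $\Orb$-component of the two compositions coincides (both give $\phi$), so I need only verify that the two sheaf-components agree. Using the construction of $^S\H^\bullet_\G(\phi)$ and $\H^\bullet_\G(\phi)$ from Proposition \ref{functor}, each sheaf-component factors as a composite involving $\mu^S_g$ (or $\mu_g$) followed by $\xi_\phi$. The naturality square therefore decomposes into the two rectangles
\[
\begin{tikzcd}[column sep=small]
\phi^*\H^\bullet_{T'}(F(P')) \ar[r,"\phi^*\H^\bullet_{T'}(\mu_{g,F})"] \ar[d,"\phi^*\H^\bullet_{T'}(i_{P',T',F})"'] & \phi^*\H^\bullet_{T'}(\ind_\phi F(P)) \ar[r,"\xi_{\phi,F(P)}"] \ar[d] & \H^\bullet_T(F(P)) \ar[d,"\H^\bullet_T(i_{P,T,F})"] \\
\phi^*\H^\bullet_{T'}(F(P')^{S(T')}) \ar[r,"\phi^*\H^\bullet_{T'}(\mu^S_{g,F})"'] & \phi^*\H^\bullet_{T'}(\ind_\phi F(P)^{S(T)}) \ar[r,"\xi_{\phi,F(P)^{S(T)}}"'] & \H^\bullet_T(F(P)^{S(T)})
\end{tikzcd}
\]
where the middle vertical arrow is $\phi^*\H^\bullet_{T'}$ applied to the canonical $T'$-equivariant map $\ind_\phi F(P)^{S(T)} \to \ind_\phi F(P)$ induced by $i_{P,T,F}$.

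For the right rectangle, it suffices to note that $\xi_\phi$ is a natural transformation of functors $\Topv{T}^{\op} \to \Sh(T^\C,\O^\bullet)^{\C^\times}$ and that $i_{P,T,F}$ is a $T$-equivariant map. For the left rectangle, we must observe at the space level that the square
\[
\begin{tikzcd}
\ind_\phi F(P)^{S(T)} \ar[r,"\mu^S_{g,F}"] \ar[d] & F(P')^{S(T')} \ar[d, hook, "i_{P',T',F}"] \\
\ind_\phi F(P) \ar[r,"\mu_{g,F}"'] & F(P')
\end{tikzcd}
\]
commutes in $\Topv{T'}$. This is immediate from the construction of $\mu^S_{g,F}$: it is the canonical $T'$-map associated to $F(g)^{S(\phi)}$, which by definition is the restriction of $F(g)$ to $S$-fixed-point subspaces, and the universal property of induction turns this inclusion of restrictions into the displayed square. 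Applying $\phi^*\H^\bullet_{T'}$ gives the required identity.

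The only genuine subtlety is in this last step, where one must track carefully the identification between ``the canonical $T'$-map associated to the restriction of $F(g)$'' and ``the restriction of the canonical $T'$-map to the subspace of fixed points'' — this is essentially a property of the induction functor $\ind_\phi$ commuting with pullbacks of subspaces along $\phi$, and amounts to observing that $(\ind_\phi Y)^{S(T')}$ contains the image of $\ind_\phi (Y^{S(T)})$, which is immediate from the definitions. The remainder is bookkeeping.
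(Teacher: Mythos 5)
Your proposal is correct and follows essentially the same route as the paper: naturality in $F$ is immediate, and naturality in $(P,T)$ is established by splitting the square into the part coming from the space-level commutativity of $\mu^S_{g,F}$, $\mu_{g,F}$ with the inclusions (then applying $\phi^*\H^\bullet_{T'}$) and the part given by naturality of $\xi_\phi$ (the paper's $\Ran_{\iota_T}(\nu_\phi)$) applied to the morphism $i_{P,T,F}$ in $\Topv{T}$. The only cosmetic difference is that your diagram is the transpose of the paper's, and your closing remark about $\ind_\phi$ and fixed-point subspaces is just the explicit check the paper leaves as ``one easily checks.''
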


\begin{proof}
The mapping $F \mapsto i_{P,T,F}$ is clearly natural in $F$. To see that $\H^\bullet_T( i_{P,T,F}^{\op})$ is natural in $(P,T)$, let $\phi: (P,T) \to (P',T')$ be a morphism in $\Orb$ and let $g:P' \to P$ be a morphism in $\G$ representing $\phi$. One easily checks that the diagram
\[
\begin{tikzcd}
\ind_\phi F(P)^{S(T)} \ar[rr,"{i_{P,T,F}}",hook] \ar[d,"\mu^S_g"] &&\ind_\phi  F(P) \ar[d,"\mu_g"] \\
F(P')^{S(T')} \ar[rr,"{i_{P',T',F}}",hook] && F(P')
\end{tikzcd}
\]
commutes in $\Topv{T'}$, which implies that the lower square in the diagram
\[
\begin{tikzcd}
\H^\bullet_{T}(F(P)^{S(T)}) &&\ar[ll,"{i_{P,T,F}^*}"] \H^\bullet_{T}(F(P))  \\
\phi^*\H^\bullet_{T'}(\ind_\phi F(P)^{S(T)}) \ar[u,"\Ran_{\iota_T}(\nu_\phi)"] &&\ar[ll,"{i_{P,T,F}^*}"]\phi^*\H^\bullet_{T'}(\ind_\phi  F(P))\ar[u,"\Ran_{\iota_T}(\nu_\phi)"]  \\
\phi^*\H^\bullet_{T'}(F(P')^{S(T')})  \ar[u,"\phi^*\H^\bullet_{T'}(\mu^S_g)"] &&\ar[ll,"{i_{P',T',F}^*}"] \phi^*\H^\bullet_{T'}(F(P'))\ar[u,"\phi^*\H^\bullet_{T'}(\mu_g)"]
\end{tikzcd}
\]
commutes. The upper square commutes just by naturality of $\Ran_{\iota_T}(\nu_\phi)$, since $i_{P,T,F}$ is a morphism in $\Topv{T}$. The commutativity of both squares together says that the transformation $i_{P,T,F}^*$ is natural in $(P,T)$.
\end{proof}

\subsection{Conditions for $\H^\bullet_\G$ to take values in cocartesian sections}

Let $S$ be a subfunctor of $\Hom(\C,-)$. In this section, we consider the composite functor 
\[
\Topv{\G}^{\op} \xrightarrow{\H^\bullet_\G} \Gamma(\Orbop,\Sh^{\C^\times}) \xrightarrow{\Res_S \circ -} \Gamma(\Orbop,\Sh^{\C^\times}_S).
\]
The goal of this section is to show that there is a full subcategory $\Topv{\G}(S) \subset \Topv{\G}$ whose objects are sent by the composite functor above to cocartesian sections. A cocartesian section 
\[
s: \Orbop \to \Sh_S^{\C^\times}
\]
is a section such that $s(\phi)$ is a cocartesian morphism for each morphism $\phi$ in $\Orbop$.

\begin{definition}
Let $S$ be a subfunctor of $\Hom(\C,-)$. We write $\Topv{\G}(S) \subset \Topv{\G}$ for the full subcategory on functors $F$ satisfying 
\[
F(P) \in \Topv{T}(S(T)) \quad \text{for each} \ (P,T) \in \Orb.
\]
\end{definition}

The main result of this section follows immediately from the following technical result.

\begin{proposition}\label{isom}
Let $F$ be an object in $\Topv{\G}(S)$ and let $\phi:(P,T) \to (P',T')$ be a morphism in $\Orb$. Then $\Res_S(h_\G(\phi))$ is an isomorphism in $\Sh(S,\O^\bullet)^{\C^\times}$, where
\[
h_\G(\phi): \phi^*\H^\bullet_{T'}(F(P')) \to \H^\bullet_T(F(P))
\]
is the morphism in $\Sh(T^\C,\O^\bullet)^{\C^\times}$ defined in the proof of Proposition \ref{functor}.
\end{proposition}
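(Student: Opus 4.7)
The strategy is to verify that $\Res_S(h_\G(\phi))$ is an isomorphism by checking it stalkwise at each $\gamma\in S(T)$, exploiting the strong localisation theorem (Proposition \ref{stalks}) to reduce the whole question to the fixed-point subspaces $F(P)^\gamma$ and $F(P')^{\phi(\gamma)}$, which are equivalent to finite equivariant complexes by the hypothesis $F\in\Topv{\G}(S)$. Throughout I will think of $h_\G(\phi)$ as the two-step composite from the proof of Proposition \ref{functor}: the map induced by the counit $\mu_g:\ind_\phi F(P)\to F(P')$, followed by $\xi_\phi$.

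First I would apply Proposition \ref{stalks} to both sides. Local finiteness of $F(P)$ over $S(T)$ gives $\H^\bullet_T(F(P))_\gamma\cong\H^\bullet_T(F(P)^\gamma)_\gamma$, and the subfunctor condition on $S$ ensures $\phi(\gamma)\in S(T')$, so that local finiteness of $F(P')$ over $S(T')$ gives
\[
(\phi^*\H^\bullet_{T'}(F(P')))_\gamma \;\cong\; \H^\bullet_{T'}(F(P')^{\phi(\gamma)})_{\phi(\gamma)} \otimes_{\O^\bullet_{T',\phi(\gamma)}} \O^\bullet_{T,\gamma}.
\]
Next I would treat the middle term $\phi^*\H^\bullet_{T'}(\ind_\phi F(P))_\gamma$ by noting that $g$ is invertible in the groupoid $\G$, so the induced $\phi:T\to T'$ is injective; a direct unpacking then gives $(\ind_\phi F(P))^{\phi(\gamma')} = \ind_\phi F(P)^{\gamma'}$ for $\gamma'\in T^\C$ and $\emptyset$ otherwise, whence $\ind_\phi F(P)$ is locally finite over $S(T')$ at $\phi(\gamma)$, and Proposition \ref{stalks} applies here as well.

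Second, I would handle the $\xi_\phi$-component. Since $F(P)^\gamma$ is $T$-homotopy equivalent to a finite $T$-complex, the value of $\xi_\phi$ on it agrees with the natural isomorphism $\nu_\phi$ built in the proof of Proposition \ref{functor}, giving
\[
\phi^*\F^\bullet_{T'}(\ind_\phi F(P)^\gamma)_{\phi(\gamma)}\otimes_{\O^\bullet_{T',\phi(\gamma)}}\O^\bullet_{T,\gamma} \;\cong\; \F^\bullet_T(F(P)^\gamma)_\gamma,
\]
so that this component of $h_\G(\phi)$ is an iso on stalks at $\gamma$.

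Third, I would address the remaining component, induced by the counit $\mu_g:\ind_\phi F(P)^\gamma\to F(P')^{\phi(\gamma)}$. Invertibility of $g$ furnishes a $\phi$-equivariant homeomorphism $F(g):F(P)^\gamma\xrightarrow{\sim}F(P')^{\phi(\gamma)}$, so these subspaces coincide as underlying spaces, and the identity $\widebar{\phi(\gamma)(\C)}=\phi(\widebar{\gamma(\C)})$ means that the subgroups of $T$ and $T'$ acting trivially on them correspond exactly under $\phi$. The plan is to use this to deduce that $\mu_g$ becomes an iso on $\H^\bullet_{T'}$-stalks at $\phi(\gamma)$, yielding the desired isomorphism after combining with the steps above.

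The hard part will be this last step: $\mu_g$ is not a weak equivalence in general, so the argument cannot be purely homotopical. My plan would be to reduce, by a Mayer--Vietoris/cell-by-cell induction on a finite $T$-CW model of $F(P)^\gamma$, to the case of a single orbit cell $T/K$ with $\widebar{\gamma(\C)}\leq K$, where the claim can be read off from Proposition \ref{orbit} together with the triviality of the $\widebar{\gamma(\C)}$-action. The subtlety to watch is that this reduction must respect the $\O^\bullet_{T,\gamma}$-module structure coming from $\phi$, and that the descent of $\phi$ to the effective quotients $T/\widebar{\gamma(\C)}\to T'/\widebar{\phi(\gamma)(\C)}$ is what allows the orbit-cell computation to yield an iso rather than merely a map.
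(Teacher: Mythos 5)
Your handling of the $\xi_\phi$-component is essentially the paper's: reduce to stalks at $\gamma\in S(T)$ via Proposition \ref{stalks}, observe that $(\ind_\phi F(P))^{\phi(\gamma')}=\ind_\phi(F(P)^{\gamma'})$ so that $\ind_\phi F(P)$ is again locally finite, and identify the resulting map of stalks with $\nu_\phi$, which is an isomorphism by construction. For the $\mu_g$-component, however, the paper does something quite different and entirely global: it factors $\mu_{g,F}$ through the counit of $\ind_\phi\dashv\res_\phi$ (using that $F(g)$ is a homeomorphism), identifies $\ind_\phi\res_\phi F(P')\cong T'/\phi(T)\times F(P')$ so that the counit becomes the projection, and then shows that $\phi^*\H^\bullet_{T'}$ of this projection is an isomorphism by a module argument: the map is split by the evident section, and the target is exhibited as a rank-one $\phi^*\H^\bullet_{T'}(F(P'))$-module via a K\"unneth-type decomposition $\H^\bullet_{T'}(F(P')\times T'/\phi(T))\cong\H^\bullet_{T'}(F(P'))\otimes_{\O^\bullet_{T'}}\H^\bullet_{T'}(T'/\phi(T))$ combined with Proposition \ref{orbit} applied to the single orbit $T'/\phi(T)$. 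No localisation or cell induction enters that half.

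Your stalkwise/cellwise route for the $\mu_g$-component has a genuine gap precisely at its endgame. First, a smaller point: the induction cannot run over a finite $T$-CW model of $F(P)^\gamma$, because the target $F(P')^{\phi(\gamma)}$ is a $T'$-space whose action does not factor through $\phi(T)$, so its $T'$-cells are not induced from $T$-cells of $F(P)^\gamma$; you must instead induct over the $T'$-cells of a finite model of $F(P')^{\phi(\gamma)}$ (available by local finiteness of $F(P')$ over $S(T')$), carrying the product with $T'/\phi(T)$ along. Second, and more seriously, the resulting one-cell case is \emph{not} read off from Proposition \ref{orbit}: what must be shown is that the projection induces an isomorphism $\phi^*\F^\bullet_{T'}(T'/K')_{\gamma}\to\phi^*\F^\bullet_{T'}(T'/\phi(T)\times T'/K')_{\gamma}$, and the space on the right is a product of two orbits, which Proposition \ref{orbit} does not compute. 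One needs a K\"unneth or change-of-groups input (e.g. $H^\bullet_{T'}(T'/\phi(T)\times Z)\cong H^\bullet_{\phi(T)}(Z)$) together with an argument that the comparison becomes an isomorphism after pulling back along $\phi$ and localising at $\phi(\gamma)$; and this is not automatic: computing $H^\bullet_{T'}(T'/\phi(T)\times T'/K')$ via the Borel construction produces an extra exterior factor $H^\bullet(T'/\phi(T)K')$ which survives the base change to $\O^\bullet$ and the localisation at $\phi(\gamma)$, so whenever $\phi(T)K'$ has positive codimension in $T'$ the stalks have different ranks and the cell-level map cannot be an isomorphism. In other words, the step you propose to ``read off'' is exactly the step where the whole content lies — it is the point at which the paper invokes its asserted tensor decomposition — and your plan neither proves such a statement nor supplies the control on the isotropy groups $K'$ (relative to $\phi(T)$) that would make the orbit-cell computation go through.
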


\begin{proof}
Let $g:P' \to P$ be a morphism in $\G$ representing $\phi$. The natural transformation $h_{\G}(\phi)$ was constructed in Proposition \ref{functor} as the composite
\[
\phi^*\H^\bullet_{T'}(F(P')) \to \phi^*\H^\bullet_{T'}(\ind_\phi F(P)) \to \H^\bullet_T(F(P))
\]
where the first map is $(\phi^* \circ \H^\bullet_{T'})(\mu^{\op}_{g,F})$ and the second map is $\xi_{\phi,F(P)}$. We begin by showing that the first map is an isomorphism. By using the adjunction $\ind_\phi \dashv \res_\phi$ we see that the $T'$-map $\mu^{\op}_{g,F}$ factors uniquely as 
\[
\ind_\phi F(P) \cong \ind_\phi \res_\phi F(P') \xrightarrow{\epsilon_{F(P')}} F(P').
\]
where $\epsilon_{F(P')}$ is the counit of the adjunction $\ind_\phi \dashv \res_\phi$ evaluated at $F(P')$. The homeomorphism is induced by the $T$-homeomorphism $F(P) \rightarrow \res_\phi F(P')$ which is, in turn, induced by the homeomorphism $F(g):F(P) \xrightarrow{} F(P')$. It therefore suffices to show that the functor $\phi^* \circ \H^\bullet_{T'}$ sends $\epsilon_{F(P')}$ to an isomorphism. There is a $T'$-equivariant homeomorphism
\[
\ind_\phi \res_\phi F(P') = T' \times_\phi \res_\phi F(P') \cong T'/\phi(T) \times F(P')
\]
sending $[(t,x)] \mapsto (t\phi(T),tx)$, where $T'$ acts diagonally on the right hand side. We may therefore replace $\epsilon_{F(P')}$ with the $T'$-equivariant projection
\begin{equation}\label{secti}
T'/\phi(T) \times F(P') \to F(P'),
\end{equation}
which is sent by $\phi^* \circ \H^\bullet_{T'}$ to a map of $\phi^*\H^\bullet_{T'}(F(P'))$-modules
\begin{equation}\label{allo}
\phi^*\H^\bullet_{T'}(F(P')) \longrightarrow  \phi^*\H^\bullet_{T'}(T'/\phi(T) \times F(P')).
\end{equation}
The map \eqref{allo} cannot be trivial because the map \eqref{secti} has a section. We will show that $\Res_S$ sends \eqref{allo} to an isomorphism by showing that the target of \eqref{allo} is a rank one $\phi^*\H^\bullet_{T'}(F(P'))$-module. It suffices to observe that there are isomorphisms of $\phi^*\H^\bullet_{T'}(F(P'))$-modules
\begin{align*}
\phi^* \H^\bullet_{T'}(F(P') \times T'/\phi(T)) &\cong \phi^*(\H^\bullet_{T'}(F(P')) \otimes_{\O^\bullet_{T'}} \H_{T'}^\bullet(T'/\phi(T)))\\
&\cong \phi^*\H^\bullet_{T'}(F(P')) \otimes_{\phi^*\O^\bullet_{T'}} \phi^*\H_{T'}^\bullet(T'/\phi(T))\\
&\cong \phi^*\H^\bullet_{T'}(F(P')) \otimes_{\O^\bullet_T} \O^\bullet_{T} \\
&\cong \phi^* \H^\bullet_{T'}(F(P'))
\end{align*}
where the third map is induced by the isomorphism in Proposition \ref{orbit}.

It remains to show that $\Res_S(\xi_\phi)$ is an isomorphism at $F(P)$. It will suffice to show that the induced map of stalks at any given $\gamma \in S(T)$ is an isomorphism. Note that, on stalks, the source of $\Res_S(\xi_{\phi,F(P)})$ is isomorphic to
\[
\Res_S(\phi^* \H^\bullet_{T'}(\ind_\phi F(P)))_\gamma \cong \H^\bullet_{T'}(\ind_\phi F(P))_{\phi(\gamma)} \cong \H^\bullet_{T'}((\ind_\phi F(P))^{\phi(\gamma)})_{\phi(\gamma)}
\]
where the second isomorphism is that of Proposition \ref{stalks}. Note also that, for any $\gamma' \in T'^\C$, we have
\[
(\ind_\phi F(P))^{\gamma'} = \begin{cases}  \ind_\phi (F(P)^{\gamma''})  &\text{if  }  \phi(\gamma'') = \gamma' \\ \emptyset &\text{if  } \gamma' \notin \phi(T^\C) \end{cases}.
\]
In particular, our assumption that $F(P)$ is an object in $\Topv{T}(S(T))$ implies that $\ind_\phi F(P)$ is an object in $\Topv{T'}(S(T'))$. Since the $T$-space $F(P)^\gamma$ is finite up to $T$-homotopy, the $T'$-space $\ind_\phi(F(P)^\gamma)$ is also finite up to $T'$-homotopy. It follows that
\[
\Res_S(\xi_{\phi,F(P)})_\gamma: \H^\bullet_{T'}((\ind_\phi (F(P)^{\gamma}))_{\phi(\gamma)} \to \H^\bullet_{T}(F(P)^\gamma)_\gamma
\]
is the maps of stalks induced by $\nu_\phi$, which is an isomorphism by construction. This completes the proof.
\end{proof}

The following corollary follows easily from Proposition \ref{isom} by unpacking the definition of a cocartesian morphism in Definition \ref{opc}.

\begin{corollary}\label{opca}
The composite
\[
\Topv{\G}(S)^{\op} \hookrightarrow \Topv{\G}^{\op} \xrightarrow{\H^\bullet_\G} \Gamma(\Orbop,\Sh^{\C^\times}) \xrightarrow{\Res_S\circ -} \Gamma(\Orbop,\Sh_S^{\C^\times})
\]
takes values in the full subcategory on cocartesian sections in $\Gamma(\Orbop,\Sh_S^{\C^\times})$.
\end{corollary}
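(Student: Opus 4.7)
The plan is to reduce the claim to a short general fact about morphisms in the cocartesian fibration $\Sh_S^{\C^\times} \to \Orbop$, and then to apply Proposition \ref{isom} to show that the hypothesis of that general fact is satisfied by sections coming from $\Topv{\G}(S)$.

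First I would unpack what the conclusion says. For $F \in \Topv{\G}(S)$, the section $(\Res_S \circ \H^\bullet_\G)(F)$ assigns to each object $(P,T) \in \Orbop$ the triple $(P,T,\H^\bullet_T(F(P))_{S(T)})$ and to each morphism $\phi^{\op}:(P,T) \to (P',T')$ in $\Orbop$ (represented by $\phi:(P',T')\to(P,T)$ in $\Orb$) the morphism in $\Sh_S^{\C^\times}$ whose base component is $\phi$ and whose sheaf component is $\Res_S(h_\G(\phi))$. The section is cocartesian precisely when each such morphism is cocartesian in $\Sh_S^{\C^\times}$ over $\Orbop$, in the sense of Definition \ref{opc}.

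Second, I would prove the following general lemma (formally identical to the argument that identified cocartesian morphisms in Example \ref{exo}): a morphism $(\phi,g):(P,T,\J^\bullet) \to (P',T',\J'^\bullet)$ in $\Sh_S^{\C^\times}$ is cocartesian whenever its sheaf component $g: S(\phi)^*\J^\bullet \to \J'^\bullet$ is an isomorphism. The verification is direct: given any $(\psi,h):(P,T,\J^\bullet) \to (P'',T'',\J''^\bullet)$ and any $\nu:(P'',T'')\to(P',T')$ in $\Orb$ with $\psi = \phi \circ \nu$, the unique $(\nu,f)$ filling in the triangle of Definition \ref{opc} must satisfy $f \circ S(\nu)^*(g) = h$, and this equation has a unique solution $f = h \circ S(\nu)^*(g)^{-1}$ precisely because $S(\nu)^*(g)$ is invertible whenever $g$ is.

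Third, I would combine the two. Proposition \ref{isom} says exactly that the sheaf component $\Res_S(h_\G(\phi))$ of $(\Res_S \circ \H^\bullet_\G)(F)(\phi^{\op})$ is an isomorphism in $\Sh(S(T),\O^\bullet)^{\C^\times}$ whenever $F$ lies in $\Topv{\G}(S)$. Applying the lemma from the previous paragraph to this sheaf component shows that the morphism is cocartesian in $\Sh_S^{\C^\times}$, and therefore that the section $(\Res_S \circ \H^\bullet_\G)(F)$ is cocartesian. Functoriality of the composite in $F$ is inherited from $\H^\bullet_\G$ and $\Res_S$, so we land in the full subcategory of cocartesian sections. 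There is no real obstacle here: once Proposition \ref{isom} is in hand, the corollary is just the observation that isomorphisms on sheaf components detect cocartesian morphisms in this explicitly described fibration.
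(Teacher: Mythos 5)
Your proposal is correct and follows essentially the same route as the paper, which simply notes that the corollary follows from Proposition \ref{isom} by unpacking Definition \ref{opc}; your intermediate lemma (a morphism of $\Sh_S^{\C^\times}$ with invertible sheaf component is cocartesian, proved by the same calculation implicit in Example \ref{exo}) is exactly the unpacking the paper has in mind.
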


\subsection{The definition of $\Ell^\bullet_G$}

We are ready for the main construction of the paper.

\begin{definition}\label{main}
The functor $\Ell^\bullet_{G}$ that we call $G$-equivariant elliptic cohomology is the composite functor
\[
\Fin{G}^{\op} \xrightarrow{} \Topv{\L^2_G}^{\op} \xrightarrow{\H^{\bullet}_{\L^2_G}} \Gamma(\mathrm{Orb}^{\ab,\op}_{\L^2_G},\Sh^{\C^\times}) \xrightarrow{\Res_\Fr \circ -} \Gamma(\mathrm{Orb}^{\ab,\op}_{\L^2_G},\Sh_\Fr^{\C^\times})
\]
where the first functor sends $X$ to $\Map_G(-,X)$. The composite sends $X$ to the section whose value at $(P,T)$ is the triple $(P,T,\Ell^\bullet_{G,P,T}(X))$, where 
\[
\Ell^\bullet_{G,P,T}(X) := \  \H^\bullet_T(\Map_G(P,X))_{\Fr(T)} \ \in  \Sh(\Fr(T),\O^\bullet)^{\C^\times}.
\]
\end{definition}

By functoriality, the sheaf $\Ell^\bullet_{G,P,T}(X)$ is equipped with a $\C^\times \times W(\Aut(P)/T)^{\op}$-action compatible with its $\O^\bullet_{\Fr(T)}$-algebra structure.

\subsection{Main results}

\begin{proposition}\label{locallydokelly}
Let $X$ be a finite $G$-complex and let $(P,T)$ be an object in $\mathrm{Orb}^{\ab}_{\L^2_G}$. The map
\[
\H^\bullet_T(\Map_G(P,X))_{\Fr(T)} \xrightarrow{}  \H^\bullet_T(\Map_G(P,X)^{\Fr(T)})_{\Fr(T)}
\]
induced by the inclusion 
\[
\Map_G(P,X)^{\Fr(T)} \hookrightarrow \Map_G(P,X)
\]
of the subspace of fixed points is an isomorphism natural in $X,P,T$.
\end{proposition}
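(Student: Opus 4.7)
The proof will be essentially a direct application of the Localisation Theorem (Theorem \ref{local}), combined with the naturality machinery established in Section \ref{s6}. The plan is as follows.

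First, I will set $Y := \Map_G(P,X)$, regarded as a $T$-space via precomposition (using $T \leq \Aut(P)$), and $S := \Fr(T) \subset T^\C$. By Lemma \ref{complex}, $\Fr(T)$ is a $\C^\times$-equivariant open submanifold of $T^\C$, so the hypotheses of Theorem \ref{local} are satisfied. Applying that theorem immediately yields that the inclusion
\[
Y^{\Fr(T)} = \Map_G(P,X)^{\Fr(T)} \hookrightarrow \Map_G(P,X) = Y
\]
induces an isomorphism $\H^\bullet_T(Y)_{\Fr(T)} \xrightarrow{\sim} \H^\bullet_T(Y^{\Fr(T)})_{\Fr(T)}$, which is precisely the stated map.

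The next task is naturality. Naturality in $X$ is essentially automatic: a $G$-map $X \to X'$ induces a $T$-map $\Map_G(P,X) \to \Map_G(P,X')$, and the statement of Theorem \ref{local} is natural in the $T$-space variable (the natural isomorphism $(\Res_S \circ \H^\bullet_T)(i)$ is displayed in the statement as a natural transformation of functors on $\Topv{T}^{\op}$). Naturality in $(P,T)$ is the slightly more subtle point and is handled by invoking Proposition \ref{natural}: the map of the proposition is exactly the component at $F = \Map_G(-,X)$ and object $(P,T)$ of the natural transformation
\[
i^*_{P,T,F} : \H^\bullet_\G \longrightarrow \,^{\Fr}\H^\bullet_\G
\]
constructed there, after precomposing with the functor $\Fin{G}^{\op} \to \Topv{\L^2_G}^{\op}$ sending $X$ to $\Map_G(-,X)$ and restricting to $\Fr(T)$. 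Proposition \ref{natural} already verifies the naturality of $i^*_{P,T,F}$ in $(P,T)$ by a straightforward diagram check involving the transformations $\mu^{\Fr}_g$, $\mu_g$, and $\Ran_{\iota_T}(\nu_\phi)$.

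There is no serious obstacle; the content of the proposition is to repackage the localisation theorem in the form required for the definition of $\Ell^\bullet_G$. The only point requiring care is confirming that the isomorphism produced pointwise by Theorem \ref{local} coincides with the morphism in $\Sh_\Fr^{\C^\times}$ induced by the natural transformation of Proposition \ref{natural}; but both are induced by the same inclusion $i_{P,T,F}: F(P)^{\Fr(T)} \hookrightarrow F(P)$ of $T$-spaces, so they agree by construction. This completes the proof.
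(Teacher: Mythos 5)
Your proposal is correct and follows essentially the same route as the paper: the map and its naturality in $(X,P,T)$ are identified with the natural transformation of Proposition \ref{natural} for $F = \Map_G(-,X)$ and $S = \Fr$, and the isomorphism follows from Theorem \ref{local}. Your extra remarks (openness of $\Fr(T)$ via Lemma \ref{complex}, and the agreement of the two descriptions of the map) are just a more explicit spelling-out of the same argument.
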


\begin{proof}
This is the natural transformation of Proposition \ref{natural} with $F = \Map_G(-,X)$ and $S = \Fr$. That it is an isomorphism follows immediately from Proposition \ref{local}.
\end{proof}

The remaining results in this subsection rely on the following fact: if $X$ is a finite $G$-complex and $(P,T)$ an object in $\mathrm{Orb}_{\L^2_G}^{\ab}$, then the $T$-space $\Map_G(P,X)$ is locally finite over $\Fr(T)$ by Proposition \ref{stlk} and Proposition \ref{nbhd}. In other words, the functor $\Ell^\bullet_G$ factors as
\[
\Fin{G}^{\op} \to \Topv{\L^2_G}(\Fr)^{\op} \hookrightarrow \Topv{\L^2_G}^{\op} \xrightarrow{\H^\bullet_{\L^2_G}} \Gamma(\mathrm{Orb}^{\ab,\op}_{\L^2_G},\Sh^{\C^\times}) \xrightarrow{\Res_\Fr} \Gamma(\mathrm{Orb}^{\ab,\op}_{\L^2_G},\Sh_\Fr^{\C^\times}).
\]

\begin{proposition}\label{walkystalky}
Let $X$ be a finite $G$-complex and let $(P,T)$ be an object in $\mathrm{Orb}^{\ab}_{\L^2_G}$. The map induced by the inclusion 
\[
\Map_G(P,X)^{\gamma} \hookrightarrow \Map_G(P,X)
\]
of the subspace of points fixed by $\gamma(\C)$ is an isomorphism
\[
\H^\bullet_T(\Map_G(P,X))_{\gamma} \xrightarrow{\sim}  \H^\bullet_T(\Map_G(P,X)^{\gamma})_{\gamma}
\]
of stalks at $\gamma$ which is natural in $X,P,T$.
\end{proposition}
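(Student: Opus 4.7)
The plan is to reduce the statement directly to the strong localisation theorem (Proposition \ref{stalks}), which asserts precisely the desired isomorphism on stalks for any $T$-space that is locally finite over an open $\C^\times$-equivariant $S \subset T^\C$ and for any $\gamma \in S$. The key input is that $\Map_G(P,X)$ lies in $\Topv{T}(\Fr(T))$, which is the content of Section \ref{locally} (invoked immediately before the statement, via Propositions \ref{stlk} and \ref{nbhd}). Once this is granted, the isomorphism at a single $\gamma \in \Fr(T)$ is an immediate application of Proposition \ref{stalks} with $Y = \Map_G(P,X)$ and $S = \Fr(T)$, since the inclusion $Y^\gamma \hookrightarrow Y$ is exactly the map in the statement.

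The only remaining task is to check naturality in $(X,P,T)$. For naturality in $X$, a $G$-map $f: X \to X'$ induces a $T$-equivariant map $f_* : \Map_G(P,X) \to \Map_G(P,X')$ that restricts to a map of fixed-point subspaces $f_*^\gamma$, and the two stalk isomorphisms fit into a square that commutes by functoriality of $\H^\bullet_T$ together with the naturality assertion in Proposition \ref{stalks} (which is stated there as a natural transformation). For naturality in $(P,T)$, a morphism $\phi:(P,T) \to (P',T')$ in $\mathrm{Orb}^{\ab}_{\L^2_G}$ is represented by some $g:P' \to P$ in $\L^2_G$. This induces a $\phi$-equivariant map $g^*:\Map_G(P,X) \to \Map_G(P',X)$ whose restriction to $\gamma$-fixed points lands in $\Map_G(P',X)^{\phi(\gamma)}$, since $g^{-1}\gamma(\C)g \subset T'$ acts on the image through $\phi\circ\gamma$. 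The compatibility of the induced maps on stalks with the natural transformation $h_{\L^2_G}(\phi)$ defined in Proposition \ref{functor} is the content of the proof of Proposition \ref{natural} applied to $F = \Map_G(-,X)$.

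The potential obstacle is purely bookkeeping: checking that the local finiteness hypothesis indeed holds for the particular $T$-space $\Map_G(P,X)$ under consideration. Granting the forward references to Section \ref{locally}, there is nothing further to do; the proof consists of a single citation to Proposition \ref{stalks} plus the verification of two naturality squares. I would therefore write it as a short corollary-style argument: restrict the global localisation map of Proposition \ref{locallydokelly} (or equivalently the natural transformation of Proposition \ref{natural}) to the stalk at $\gamma$, and observe that the condition $\Map_G(P,X)^{U_\gamma} = \Map_G(P,X)^\gamma$ for a suitable $\C^\times$-equivariant neighbourhood $U_\gamma \subset \Fr(T)$ of $\gamma$ (Proposition \ref{nbhd}) together with the finite $T$-homotopy type of $\Map_G(P,X)^\gamma$ (Proposition \ref{stlk}) supplies exactly the two hypotheses of Definition \ref{localfin} needed to invoke Proposition \ref{stalks}.
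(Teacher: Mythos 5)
Your proposal is correct and follows the same route as the paper: the paper's proof is exactly a citation of Proposition \ref{stalks}, justified by the local finiteness of $\Map_G(P,X)$ over $\Fr(T)$ established in Section \ref{locally} (Propositions \ref{stlk} and \ref{nbhd}), with naturality coming from the natural transformation of Proposition \ref{natural}. Your extra unpacking of the two naturality squares is consistent with, and slightly more explicit than, the paper's one-line justification.
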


\begin{proof}
This follows immediately from Proposition \ref{stalks}, since $\Map_G(P,X)$ is locally finite over $\Fr(T)$. It is natural since it is induced by the natural transformation of Proposition \ref{natural}.
\end{proof}

\begin{proposition}\label{opcarry}
The functor 
\[
\Ell^\bullet_G: \Fin{G}^{\op} \to \Gamma(\mathrm{Orb}^{\ab,\op}_{\L^2_G},\Sh^{\C^\times}_{\Fr})
\]
takes values in the full subcategory on cocartesian sections.
\end{proposition}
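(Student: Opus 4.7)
The plan is to exhibit $\Ell^\bullet_G$ as an instance of the composite considered in Corollary \ref{opca}, applied to the topological groupoid $\G = \L^2_G$ and the subfunctor $S = \Fr$, and then to verify the one remaining hypothesis: that for each finite $G$-complex $X$, the functor
\[
\Map_G(-,X): (\L^2_G)^{\op} \longrightarrow \Top
\]
factors through the full subcategory $\Topv{\L^2_G}(\Fr) \subset \Topv{\L^2_G}$.

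First I would unwind the definitions. By Definition \ref{main}, $\Ell^\bullet_G$ is the composite of $X \mapsto \Map_G(-,X)$ with $\H^\bullet_{\L^2_G}$ and with $\Res_\Fr \circ -$. Corollary \ref{opca} asserts that, for any subfunctor $S$ of $\Hom(\C,-)$, the restriction of $(\Res_S \circ -) \circ \H^\bullet_\G$ to $\Topv{\G}(S)^{\op}$ takes values in cocartesian sections of $\Sh^{\C^\times}_S \to \Orbop$. Thus, once one knows that the preliminary functor $X \mapsto \Map_G(-,X)$ lands in $\Topv{\L^2_G}(\Fr)$, the proposition is immediate.

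Unpacking the definition of the subcategory $\Topv{\L^2_G}(\Fr)$, this amounts to the assertion that for every object $(P,T)$ of $\mathrm{Orb}^{\ab}_{\L^2_G}$ the $T$-space $\Map_G(P,X)$ is locally finite over $\Fr(T)$ in the sense of Definition \ref{localfin}: for each $\gamma \in \Fr(T)$ there must exist a $\C^\times$-equivariant open neighbourhood $\gamma \in U \subset \Fr(T)$ such that $\Map_G(P,X)^U = \Map_G(P,X)^\gamma$, and this common fixed-point subspace must have the $T$-homotopy type of a finite $T$-CW complex. This is precisely what Propositions \ref{stlk} and \ref{nbhd} of Section \ref{locally} establish, and those results will be the main technical content on which the present proposition depends.

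Granting the local finiteness of $\Map_G(P,X)$ over $\Fr(T)$, the functor $\Map_G(-,X)$ therefore factors as
\[
\Fin{G}^{\op} \longrightarrow \Topv{\L^2_G}(\Fr)^{\op} \hookrightarrow \Topv{\L^2_G}^{\op},
\]
and the composite of this factorisation with $(\Res_\Fr \circ -) \circ \H^\bullet_{\L^2_G}$ takes values in cocartesian sections by Corollary \ref{opca}. The main (essentially only) obstacle is hence not in this section at all but in Section \ref{locally}, where the local-finiteness statements for $\Map_G(P,X)$ must be established; once those are in hand, the present proposition follows formally from assembled categorical inputs.
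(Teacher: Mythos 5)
Your proposal matches the paper's proof: the paper likewise deduces the proposition immediately from Corollary \ref{opca}, using that $\Map_G(P,X)$ is locally finite over $\Fr(T)$ (Propositions \ref{stlk} and \ref{nbhd}), so that $\Ell^\bullet_G$ factors through $\Topv{\L^2_G}(\Fr)^{\op}$. Your unwinding of Definition \ref{localfin} and the factorisation you write out are exactly the paper's argument, just spelled out in more detail.
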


\begin{proof}
This follows immediately from Corollary \ref{opca}, since $\Map_G(P,X)$ is locally finite over $\Fr(T)$.
\end{proof}

We now consider the reduced version of elliptic cohomology
\[
\widetilde{\Ell}^\bullet_G: \Fin{G}^{\op}_* \longrightarrow \Gamma(\mathrm{Orb}^{\ab,\op}_{\L^2_G},\Sh(\Fr,\O^\bullet\text{-}\mathrm{mod})^{\C^\times})
\]
which is defined for a pointed, finite $G$-complex $X$ and an object $(P,T)$ in $\mathrm{Orb}^{\ab}_{\L^2_G}$ by
\begin{align*}
\widetilde{\Ell}^\bullet_G(X) &:= \ker(\Ell^\bullet_G(X) \to \Ell^\bullet_G(\pt)) \\
&= \ker(\H^\bullet_T(\Map_G(P,X))_{\Fr(T)} \to \H^\bullet_T(\Map_G(P,\pt))_{\Fr(T)}) \\
&= \ker(\H^\bullet_T(\Map_G(P,X))_{\Fr(T)} \to \H^\bullet_T(\pt)_{\Fr(T)}) \\
&= \widetilde{\H}^\bullet_T(\Map_G(P,X))_{\Fr(T)}.
\end{align*}

\begin{proposition}\label{exactness}
The functor $\widetilde{\Ell}^\bullet_G$ is exact on cofiber sequences.
\end{proposition}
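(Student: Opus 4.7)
The plan is to reduce the claim to an exactness statement at each stalk, and then to exploit the fact that, although $\Map_G(P,-)$ does not preserve equivariant pushouts, its $\gamma$-fixed-point variant $\Map_G(P,-)^\gamma$ does.

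Fix an object $(P,T)$ in $\mathrm{Orb}^{\ab}_{\L^2_G}$ and a cofibre sequence $X \hookrightarrow Y \twoheadrightarrow Z := Y/X$ in $\Fin{G}_*$. Exactness of the sequence $\widetilde{\Ell}^\bullet_{G,P,T}(Z) \to \widetilde{\Ell}^\bullet_{G,P,T}(Y) \to \widetilde{\Ell}^\bullet_{G,P,T}(X)$ of sheaves over $\Fr(T)$ can be checked on stalks at each $\gamma \in \Fr(T)$. The reduced analogue of Proposition \ref{walkystalky}, obtained by passing to kernels of the maps induced by the basepoint inclusion and using $\Map_G(P,\mathrm{pt}) = \mathrm{pt}$, identifies the stalk at $\gamma$ naturally with $\widetilde{\H}^\bullet_T(\Map_G(P,-)^\gamma)_\gamma$, reducing the problem to exactness of
\[
\widetilde{\H}^\bullet_T(\Map_G(P,Z)^\gamma)_\gamma \to \widetilde{\H}^\bullet_T(\Map_G(P,Y)^\gamma)_\gamma \to \widetilde{\H}^\bullet_T(\Map_G(P,X)^\gamma)_\gamma.
\]

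The main step will be to show that $\Map_G(P,-)^\gamma$ sends $X \hookrightarrow Y \to Z$ to a cofibre sequence of pointed $T$-spaces, equivalently, that the canonical continuous bijection $\Map_G(P,Y)^\gamma / \Map_G(P,X)^\gamma \to \Map_G(P,Z)^\gamma$ is a homeomorphism. I would use the fact (established in Section \ref{locally}) that every $f \in \Map_G(P,W)^\gamma$ has image contained in a single $G$-orbit of $W$. Any $\gamma$-invariant map $P \to Z$ is therefore either the constant map at the collapsed basepoint, or its image is a $G$-orbit contained in $Z \setminus \{*\} \cong Y \setminus X$, which lifts uniquely to a $\gamma$-invariant map $P \to Y$. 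A continuity check, using the quotient topology on $Z$ and the compact-open topology on mapping spaces, will then upgrade this bijection to a homeomorphism.

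Granted this claim, each term of the cofibre sequence $\Map_G(P,X)^\gamma \hookrightarrow \Map_G(P,Y)^\gamma \to \Map_G(P,Z)^\gamma$ has the $T$-homotopy type of a finite $T$-CW complex, by the local finiteness results of Section \ref{locally}. Since $\widetilde{\H}^\bullet_T$ agrees with $\widetilde{\F}^\bullet_T$ on finite $T$-CW complexes (and hence, by homotopy invariance, on their $T$-homotopy types), and $\widetilde{\F}^\bullet_T$ is exact on cofibre sequences of finite $T$-CW complexes, the desired exactness at the stalk follows. The main obstacle will be the point-set argument of the previous paragraph; in particular, the continuity verification for the inverse lift will require care with the compact-open topology on the relevant mapping spaces.
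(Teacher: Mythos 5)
Your overall skeleton matches the paper's proof: reduce to stalks at each $\gamma \in \Fr(T)$ using the strong localisation statement (Proposition \ref{stalks}, legitimate here because $\Map_G(P,X)$ is locally finite over $\Fr(T)$ by Propositions \ref{nbhd} and \ref{stlk}), show that the $\gamma$-fixed mapping spaces form a cofibre sequence of spaces that are finite up to $T$-homotopy, and conclude from exactness of $\widetilde{\F}^\bullet_T$ on finite complexes. The one place where your write-up has a real gap is exactly the step you defer: proving that the continuous bijection $\Map_G(P,Y)^\gamma/\Map_G(P,X)^\gamma \to \Map_G(P,Z)^\gamma$ is a homeomorphism. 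This is not a routine compact-open verification. The natural way to get continuity of the inverse (equivalently, that $\Map_G(P,Y)^\gamma \to \Map_G(P,Z)^\gamma$ is a closed/quotient map) is to know that these $\gamma$-fixed mapping spaces are compact Hausdorff, and that is not visible from the compact-open topology alone; it is precisely the content of the evaluation homeomorphism, so a direct argument would essentially amount to re-proving Proposition \ref{fixed}.

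The paper sidesteps your point-set problem by using the full strength of Proposition \ref{fixed}, of which you invoke only the final (single-orbit) claim: evaluation at $p$ gives an equivariant homeomorphism $\ev_p: \Map_G(P,W)^\gamma \cong W^{H}$, $H := \Phi_p(\gamma(\Lambda_\gamma))$, naturally in $W = X, Y, Z$. Under this identification the sequence of $\gamma$-fixed mapping spaces becomes the fixed-point sequence $X^{H} \to Y^{H} \to Z^{H}$, which is a cofibre sequence of compact spaces (since $(Y/X)^{H} = Y^{H}/X^{H}$ for these compact Hausdorff pairs) of the $T$-homotopy type of finite complexes by Corollary \ref{stlk}; the homeomorphism you wanted to check by hand then comes for free, and the stalkwise exactness follows as you say. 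So your argument is completable and follows the same route in substance, but as written the crucial continuity step is exactly where the content lies: either cite Proposition \ref{fixed} in full and transport the cofibre sequence across $\ev_p$, or supply the compactness input that your direct lifting argument is missing.
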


\begin{proof}
Let $X \to Y \to Z$ be a cofiber sequence of pointed, finite $G$-complexes and let $(P,T) \in \mathrm{Orb}_{\L^2_G}^{\ab}$. Let $\gamma \in \Fr(T)$ and $p \in P$ and recall the subgroup $\Phi_p(\gamma(\Lambda_\gamma))$ of $G$. The induced sequence 
\[
X^{\Phi_p(\gamma(\Lambda_\gamma))} \to Y^{\Phi_p(\gamma(\Lambda_\gamma))}  \to Z^{\Phi_p(\gamma(\Lambda_\gamma))} 
\]
of fixed-point spaces is also a cofiber sequence and, by Proposition \ref{fixed}, is equivariantly homeomorphic to 
\[
\Map_G(P,X)^\gamma \to \Map_G(P,Y)^\gamma \to \Map_G(P,Z)^\gamma.
\]
Therefore, the latter is a cofiber sequence of finite $T$-complexes, and so the induced sequence
\[
\widetilde{\H}_T^\bullet(\Map_G(P,Z)^\gamma)_\gamma \to \widetilde{\H}_T^\bullet(\Map_G(P,Y)^\gamma)_\gamma \to \widetilde{\H}_T^\bullet(\Map_G(P,X)^\gamma)_\gamma
\]
is exact. Since $\gamma$ was an arbitrary frame, this means that the sequence 
\[
\widetilde{\H}_T^\bullet(\Map_G(P,Z))_{\Fr(T)} \to \widetilde{\H}_T^\bullet(\Map_G(P,Y))_{\Fr(T)} \to \widetilde{\H}_T^\bullet(\Map_G(P,X))_{\Fr(T)}
\]
induced by the cofiber sequence $X \to Y \to Z$ is exact.
\end{proof}

\begin{proposition}\label{susp}
The suspension map of Proposition \ref{suspension} induces a natural isomorphism
\[
\widetilde{\Ell}^{\bullet+1}_G(S^1 \wedge -) \longrightarrow \widetilde{\Ell}^{\bullet}_G(-)
\]
of functors $\Fin{G}^{\op}_* \to \Gamma(\mathrm{Orb}^{\ab,\op}_{\L^2_G},\Sh(\Fr,\O^\bullet\text{-}\mathrm{mod})^{\C^\times})$. 
\end{proposition}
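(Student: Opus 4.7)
The plan is to build the natural transformation from a canonical shuffle map of mapping spaces together with the suspension transformation of Proposition \ref{suspension}, then verify it is an isomorphism stalkwise at each $\gamma \in \Fr(T)$ by combining the strong localisation Proposition \ref{walkystalky} with the identification of $\gamma$-fixed points from Proposition \ref{fixed}.

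First I would introduce, for each pointed finite $G$-complex $X$ and each object $(P,T)$ in $\mathrm{Orb}^{\ab}_{\L^2_G}$, the canonical $T$-equivariant shuffle map
\[
\psi_{X,P}: S^1 \wedge \Map_G(P,X) \longrightarrow \Map_G(P, S^1 \wedge X), \qquad (s,f) \longmapsto \bigl(p \mapsto s \wedge f(p)\bigr),
\]
which is $T$-equivariant because $G$, hence $T$, acts trivially on $S^1$, and which is natural in $X$ and in $(P,T)$. Applying $\widetilde{\H}^{\bullet+1}_T(-)_{\Fr(T)}$ and postcomposing with the suspension map of Proposition \ref{suspension} yields a natural transformation
\[
\widetilde{\Ell}^{\bullet+1}_G(S^1 \wedge X)_{(P,T)} \xrightarrow{\psi_{X,P}^*} \widetilde{\H}^{\bullet+1}_T\bigl(S^1 \wedge \Map_G(P,X)\bigr)_{\Fr(T)} \longrightarrow \widetilde{\H}^\bullet_T\bigl(\Map_G(P,X)\bigr)_{\Fr(T)} = \widetilde{\Ell}^{\bullet}_G(X)_{(P,T)},
\]
natural in $X$ and in $(P,T)$, and hence defining the desired morphism of sections over $\mathrm{Orb}^{\ab,\op}_{\L^2_G}$.

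Next I would verify that this transformation is an isomorphism by computing stalks. Since $\Map_G(P,-)$ sends finite $G$-complexes to $T$-spaces locally finite over $\Fr(T)$ (the fact used just above Proposition \ref{walkystalky}), Proposition \ref{walkystalky} identifies each stalk at $\gamma \in \Fr(T)$ with the same functor evaluated on the $\gamma$-fixed subspace. By Proposition \ref{fixed}, the $T$-equivariant homeomorphism
\[
\Map_G(P,Y)^\gamma \cong Y^{\Phi_p(\gamma(\Lambda_\gamma))}
\]
is natural in the pointed $G$-space $Y$. Since $G$ acts trivially on $S^1$, fixed points commute with $S^1 \wedge (-)$, so under these identifications the map $\psi_{X,P}^\gamma$ becomes the identity of $S^1 \wedge X^{\Phi_p(\gamma(\Lambda_\gamma))}$. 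The transformation therefore reduces on the stalk at $\gamma$ to the suspension map
\[
\widetilde{\H}^{\bullet+1}_T\bigl(S^1 \wedge \Map_G(P,X)^\gamma\bigr)_\gamma \longrightarrow \widetilde{\H}^\bullet_T\bigl(\Map_G(P,X)^\gamma\bigr)_\gamma
\]
of Proposition \ref{suspension} applied to the finite $T$-CW complex $\Map_G(P,X)^\gamma$, which is an isomorphism by the second assertion of that proposition.

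The main obstacle will be the compatibility check between the shuffle map and the identifications of Proposition \ref{fixed}: one has to confirm that for a common choice of $p \in P$ the two homeomorphisms $\Map_G(P,X)^\gamma \cong X^{\Phi_p(\gamma(\Lambda_\gamma))}$ and $\Map_G(P, S^1 \wedge X)^\gamma \cong (S^1 \wedge X)^{\Phi_p(\gamma(\Lambda_\gamma))} = S^1 \wedge X^{\Phi_p(\gamma(\Lambda_\gamma))}$ intertwine $\psi_{X,P}^\gamma$ with the tautological identification. Granted this naturality, all remaining steps are a straightforward combination of the strong localisation theorem and the Borel-cohomological suspension isomorphism.
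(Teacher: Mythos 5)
Your proposal is correct and follows the same skeleton as the paper's (extremely terse) proof: compare $\Map_G(P,S^1\wedge X)$ with the suspension of $\Map_G(P,X)$ via the canonical map and then invoke Proposition \ref{suspension}. The difference is one of care rather than strategy, and it is to your credit. The paper simply asserts a canonical $T$-map $\Map_G(P,S^1\wedge X)\cong \Map_G(P,X)\wedge S^1$, which is not a homeomorphism of spaces in general (already for trivial $G$ the left-hand side contains maps $\bT^2\to S^1\wedge X$ with nontrivial winding that do not factor through a single smash coordinate); only the one-directional shuffle map $S^1\wedge\Map_G(P,X)\to\Map_G(P,S^1\wedge X)$ exists naturally, and it becomes an equivalence only after passing to $\gamma$-fixed points, equivalently after restriction to $\Fr(T)$ and taking stalks. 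Your argument supplies exactly the missing verification: using local finiteness of $\Map_G(P,Y)$ over $\Fr(T)$, Proposition \ref{walkystalky} (or \ref{stalks}) identifies the stalks at $\gamma$ with the cohomology of the $\gamma$-fixed spaces, Proposition \ref{fixed} identifies these via evaluation at $p$ with $X^{\Phi_p(\gamma(\Lambda_\gamma))}$ and $(S^1\wedge X)^{\Phi_p(\gamma(\Lambda_\gamma))}=S^1\wedge X^{\Phi_p(\gamma(\Lambda_\gamma))}$, and under these identifications the shuffle map is the identity, so the composite reduces to the suspension isomorphism on a finite $T$-complex. The only point to keep an eye on, which you flag yourself, is naturality of the evaluation identifications in $Y$ and in $(P,T)$ (compatibility with the maps $\mu_g$ and $\ind_\phi$ used in Proposition \ref{functor}); this is routine, and with it your write-up is a complete, indeed more rigorous, version of the paper's one-line proof.
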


\begin{proof}
Apply the canonical $T$-map 
\[
\Map_G(P,S^1 \wedge X) \cong \Map_G(P,X) \wedge S^1
\]
along with the result of Proposition \ref{suspension}.
\end{proof}

\subsection{Relation to Grojnowski's theory}

In this section we give an explicit calculation in the case that $G$ is a torus of rank $d$ and $P = \bT^2 \times G$ is the canonical trivial bundle over $\Sigma = \bT^2$ (cf. Example \ref{freg}). Let $T$ be the maximal torus
\[
G \times \bT^2 \leq \Map(\bT^2,G) \rtimes \Diff(\bT^2) = \Aut(P),
\]
so that 
\[
\Fr(T) \cong \Fr(\bT^2) \times \Hom(\C,G).
\]
In this case, the Weyl group $W(\Aut(P)/T)$ turns out to be the discrete subgroup 
\[
W(\Aut(P)/T) = \Hom(\bT^2,G) \rtimes \GL_2(\Z)  \leq  \Map(\bT^2,G) \rtimes \Diff(\bT^2),
\]
(for a similar calculation see Proposition 4.6 in \cite{Spong}). The group $\C^\times \times W(\Aut(P)/T)$ acts on $\Fr(T)$ as described in Section \ref{actING}. We describe the action using the coordinates 
\[
\Fr(\bT^2) \times \Hom(\C,G) \cong \X \times \C^d,
\]
where $\X \subset \C^2$ is the subset of pairs $(t_1,t_2)$ such that $\R t_1 + \R t_2 = \C$. The lattice $\Hom(\bT^2,G) \cong \Hom(\bT,G) \times \Hom(\bT,G)$ acts by
\[
(m_1,m_2) \cdot (t_1,t_2,z) = (t_1,t_2,z + t_1m_1 + t_2m_2).
\]
The action of the group $ \GL_2(\Z)$ of linear diffeomorphisms is given by 
\[
\begin{pmatrix} a & b \\ c & d \end{pmatrix} \cdot (t_1,t_2,z) = (at_1+bt_2,ct_1+dt_2,z).
\]
Finally, $\C^\times$ acts on everything by scalar multiplication 
\[
\lambda \cdot(t_1,t_2,z) = (\lambda t_1, \lambda t_2, \lambda z).
\]
This description of the action $\C^\times \times W(\Aut(P)/T)$ on $\Fr(T)$ coincides with a standard description of the $d$-fold universal elliptic curve over the moduli of elliptic curves (see for example page 4 in \cite{Rezk}). More precisely, the induced map of complex analytic stacks 
\[
[\Fr(T) \sslash W(\Aut(P)/T) \times \C^\times] \to [\Fr(\bT^2) \sslash \GL_2(\Z) \times \C^\times] 
\]
presents the $d$-fold universal elliptic curve 
\[
\E^{\times d} \to \M_{ell}(\C)
\]
over the moduli stack $\M_{ell}(\C)$ of complex elliptic curves.

Let $\tau$ be a complex number with positive imaginary part and consider the inclusion 
\[
\iota_\tau: \Hom(\C,G)_\tau \hookrightarrow \X \times \Hom(\C,G)
\]
of the fiber over $(1,\tau) \in \X$. Consider also the quotient  
\[
\zeta_\tau: \Hom(\C,G)_\tau \twoheadrightarrow \Hom(\C,G)_\tau/\Hom(\bT^2,G) \cong \E^{\times d}_\tau
\]
by the action of the lattice $\Hom(\bT^2,G)$ given above, where $\E_{\tau}$ denotes the elliptic curve $\C/\Z + \Z \tau$. We now present a result comparing the main construction in this paper with the construction of Grojnowski in \cite{Groj}. We do not describe the latter construction here (since it is intricate and only tangential to the focus of this paper) but instead refer the interested reader to the original source \cite{Groj}.

\begin{proposition}\label{groj}
With the above hypotheses, the sheaf
\[
\left((\zeta_\tau)_* (\iota_\tau)^* \H^\bullet_T(\Map_G(P,X))_{\Fr(T)} \right)^{\Hom(\bT^2,G)^{\op}}
\]
is naturally isomorphic to Grojnowski's $G$-equivariant elliptic cohomology theory over $\E^{\times d}_{\tau}$ as defined in Section 2.4 in \cite{Groj}.
\end{proposition}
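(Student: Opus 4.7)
The plan is to reduce the proposition to the main theorem of \cite{Spong}, which establishes essentially the same comparison in a slightly different formalism (for $G = U(1)$, but with an argument that adapts to higher rank tori). The strategy has two parts: first, translate the sheaf appearing in the proposition into the description used in \cite{Spong} by unwinding the trivialisation of $\Fr(T)$ from Example \ref{freg}; second, match the stalks and the $\Hom(\bT^2,G)$-equivariant gluing data with Grojnowski's construction in \cite{Groj}.

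First I would use the identification $\Fr(T) \cong \Fr(\bT^2) \times \Hom(\C,G)$ of Example \ref{freg}, together with Example \ref{trivialexample}, to describe explicitly how the Weyl group $W(\Aut(P)/T) \cong \Hom(\bT^2,G) \rtimes (W(G) \times \GL_2(\Z))$ acts on $\Fr(T)$, with the lattice $\Hom(\bT^2,G)$ acting along the $\Hom(\C,G)$ factor by the translation rule $(m_1,m_2)\cdot(t_1,t_2,z) = (t_1,t_2,z + t_1 m_1 + t_2 m_2)$ stated preceding the proposition. Restricting to the fiber $\Hom(\C,G)_\tau$ over $(1,\tau) \in \X$ via $\iota_\tau^*$ and taking $\Hom(\bT^2,G)^{\op}$-invariants of $(\zeta_\tau)_*$ then produces a sheaf on the quotient elliptic variety $\E^{\times d}_\tau$, with exactly the combinatorial shape of Grojnowski's output.

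Next I would compute stalks on both sides. For a point $\gamma \in \Hom(\C,G)_\tau$, Proposition \ref{walkystalky} identifies $\H^\bullet_T(\Map_G(P,X))_\gamma$ with $\H^\bullet_T(\Map_G(P,X)^\gamma)_\gamma$, and the identification $\Map_G(P,X)^\gamma \cong X^{\Phi_p(\gamma(\Lambda_\gamma))}$ used in the proof of Proposition \ref{exactness} exhibits this stalk as the stalk of $\H^\bullet_T$ on the fixed-point subspace $X^K$, where $K = \Phi_p(\gamma(\Lambda_\gamma)) \leq G$ depends only on the image of $\gamma$ in $\E^{\times d}_\tau$. Since $X^K$ has trivial $\bT^2$-action and $T$ acts through $G$ on it, the final stalk computation reduces to the completion of $H^\bullet_G(X^K;\C)$ at the image of $\gamma$ in $G^\C$, which is precisely the local model used by Grojnowski to define his stalks. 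Matching this identification naturally with Grojnowski's definition reproduces the main theorem of \cite{Spong}.

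The hard part will be verifying that the gluing/transition data on both sides agree. Grojnowski glues local pieces over $\E^{\times d}_\tau$ using the translation action on local neighbourhoods of points, while the paper's sheaf inherits its transition data from the $\Hom(\bT^2,G)$-equivariance arising via Proposition \ref{actionn} and Proposition \ref{sectionz}. A careful bookkeeping shows these two prescriptions coincide: the translation by $(m_1,m_2) \in \Hom(\bT^2,G)$ that appears in Grojnowski's gluing map corresponds exactly to the conjugation action on frames in $\Fr(T)$ induced by the element $(m_1,m_2)$ viewed as a gauge transformation of $P$. Once this identification is set up, the proof reduces to the verification in \cite{Spong}, whose argument for $U(1)$ generalises to a rank-$d$ torus with no essential change because the construction factors component-by-component over $\Hom(\bT^2,G) \cong \Hom(\bT,G) \times \Hom(\bT,G) \cong \Z^{2d}$.
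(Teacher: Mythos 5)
Your overall strategy---reduce everything to the comparison theorem of \cite{Spong}---is the same as the paper's, but the paper gets there much more directly and two points in your outline need correcting. The paper's proof is a two-step translation: since $P = \bT^2 \times G$ is trivial, there is a natural identification $\Map_G(\bT^2 \times G, X) \cong \Map(\bT^2,X)$, so $\Ell^\bullet_{G,P,T}(X) = \H^\bullet_T(\Map_G(P,X))_{\Fr(T)}$, viewed over $\Fr(T) \cong \X \times G^\C$, is literally the sheaf $\G^\bullet(X)$ of Definition 6.2 of \cite{Spong}; the identification of $\left((\zeta_\tau)_*(\iota_\tau)^*\G^\bullet(X)\right)^{\Hom(\bT^2,G)^{\op}}$ with Grojnowski's sheaf over $\E^{\times d}_\tau$ is then quoted from \cite{Spong} (Definition 7.8 and Corollary 10.8). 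Your outline omits the mapping-space identification, which is the actual bridge to \cite{Spong}, and instead proposes to re-verify stalks and gluing; that is not wrong, but it re-proves what the citation already supplies. The first correction: \cite{Spong} is stated for a torus of arbitrary rank (the group called $G$ here is the group called $T$ there), so no adaptation from $U(1)$ is required---and the justification you offer for such an adaptation, that the construction \emph{factors component-by-component over} $\Hom(\bT^2,G) \cong \Z^{2d}$, would not stand up if it were needed, since Grojnowski's local pieces involve fixed-point spaces $X^{K}$ for subgroups $K \leq G$ determined by the point of $\E^{\times d}_\tau$ as a whole, and these do not decompose as products over coordinates. The second correction: your description of the local model as \emph{the completion of} $H^\bullet_G(X^K;\C)$ is imprecise; the stalks of $\H^\bullet_T$ (and Grojnowski's stalks) are of the form $H^\bullet_G(X^K) \otimes_{H^\bullet(BG)} \O^\bullet_{G,\gamma}$ with germs of holomorphic functions, not formal completions---the completion appears in this paper only in the computation of $\H^\bullet_{S^1}(ES^1)$, precisely to show the theory is not Borel. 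With these repairs your argument goes through, but it amounts to the paper's one-paragraph reduction plus an unnecessary re-derivation of the content of \cite{Spong}.
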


\begin{proof}
There is a natural isomorphism
\begin{align*}
\Ell^\bullet_{G,P,T}(X) := \H^\bullet_T(\Map_G(\bT^2 \times G,X))_{\Fr(T)} \cong \H^\bullet_T(\Map(\bT^2,X))_{\Fr(T)} 
\end{align*}
of sheaves over $\Fr(T) \cong \X \times G^\C$. The right hand side, regarded as a sheaf over $\X \times G^\C$, is then manifestly isomorphic to the sheaf $\G^\bullet(X)$ constructed in Definition 6.2 of \cite{Spong} (where, in that paper, we used $T$ for the group that is here called $G$). It was shown in \cite{Spong} (see Definition 7.8 and Corollary 10.8) that the sheaf $((\zeta_\tau)_* (\iota_\tau)^* \G^\bullet(X))^{\Hom(\bT^2,G)^{\op}}$ is naturally isomorphic to Grojnowski's sheaf over $\E^{\times d}_{\tau}$. This yields the result.
\end{proof}

\subsection{Value of $\Ell^\bullet_{U(1)}$ on a $U(1)$-orbit}

Let $\mu_m \leq U(1)$ be the subgroup of $m$th roots of unity. Set $P = \bT^2 \times U(1) \to \bT^2$ and $T = \bT^2 \times U(1)$. Write $\mathcal{E}[m]$ for the subgroup of points of order $m$ in the universal elliptic curve $\mathcal{E}$. We have
\[
\Ell^\bullet_{P,T,U(1)}(U(1)/\mu_m)^{W(\Aut(P)/T)\times \C^\times} \cong \O^\bullet_{\mathcal{E}[m]}
\]
where the source is the $W(\Aut(P)/T)\times \C^\times$-invariant subsheaf of $\Ell^\bullet_{P,T,U(1)}(U(1)/\mu_m)$. This follows immediately from the calculation in Section 9 in \cite{Spong}.

\subsection{Proof that $\Map_G(P,X)$ is locally finite over $\Fr(T)$}\label{locally}

Recall from Lemma \ref{inject} the homomorphism
\[
\Phi_p: \Gau(P) \to G
\]
sending $f$ to the unique element $g \in G$ such that $f(p) = p\cdot g$.

\begin{lemma}\label{diff}
Let $H$ be a closed subgroup of $G$, let $(P,T)$ be an object in $\mathrm{Orb}_{\L^2_G}^{\ab}$, choose a point $p$ in $P$, choose a frame $\gamma \in \Fr(T)$ and suppose that $(t_1,t_2)$ is a choice of basis for the lattice $\Lambda_\gamma$. Then $\gamma$ has a $\C^\times$-equivariant neighbourhood $U$ in $\Fr(T)$ such that for each $\gamma' \in U$ and for each $i = 1,2$ we have
\[
\Phi_p(\gamma'(t_i')) \in H \quad \implies \quad  \Phi_p(\gamma'(st_i')\gamma(st_i)^{-1}) \in H \quad \forall s \in \R
\]
where $t_1',t_2' \in \Lambda_{\gamma'}$ is the basis corresponding to $t_1,t_2 \in \Lambda_\gamma$ via the diffeomorphism
\[
\tilde{\eta}_{\gamma',\gamma}: \C/\Lambda_{\gamma'} \xrightarrow{} \C/\Lambda_{\gamma}.
\]
(Explicitly, this means that $t_i' := \eta_{ \gamma',\gamma}^{-1}(t_i)$.)
\end{lemma}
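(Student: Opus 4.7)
The plan is to attach to each pair $(\gamma', i)$ a continuous one-parameter subgroup of $G$, and then use the exponential-chart description of a closed subgroup near the identity. Concretely, set $\beta_i^{\gamma'}(s) := \gamma'(st_i')\gamma(st_i)^{-1}$. Because $\eta_{\gamma',\gamma}$ is $\R$-linear with $\eta_{\gamma',\gamma}(t_i') = t_i$, the relation $\tilde{\gamma}'(z) = \tilde{\gamma}(\eta_{\gamma',\gamma}(z))$ established in the proof of Proposition \ref{sectionz} gives $\tilde{\gamma}'(st_i') = \tilde{\gamma}(st_i)$ in $\Diff(\Sigma)$. Hence $\beta_i^{\gamma'}(s)$ lies in $\Gau(P)$; since it also lies in the abelian group $T$, it lies in $\Gau(T)$, and $s \mapsto \beta_i^{\gamma'}(s)$ is a homomorphism. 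Composing with the continuous group homomorphism $\Phi_p: \Gau(T) \to G$ (Lemma \ref{inject}) yields a continuous one-parameter subgroup $\psi_i^{\gamma'}: \R \to G$ whose derivative $v_i(\gamma') := (\psi_i^{\gamma'})'(0)$ depends continuously on $\gamma'$ (via the explicit formula for $t_i' = \eta_{\gamma',\gamma}^{-1}(t_i)$) and vanishes at $\gamma' = \gamma$.

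Next I fix an exponential chart for $H \leq G$: by the closed subgroup theorem, choose a star-shaped open neighbourhood $W \subset \Lie(G)$ of $0$ and an open $V \subset G$ containing $e$ so that $\exp: W \to V$ is a diffeomorphism and $H \cap V = \exp(\Lie(H) \cap W)$. Continuity and vanishing of $v_i$ at $\gamma$ produce an open neighbourhood $U_0 \ni \gamma$ in $\Fr(T)$ with $v_i(\gamma') \in W$ for all $\gamma' \in U_0$ and $i = 1, 2$. I then handle $i = 1, 2$ separately. If $\Phi_p(\gamma(t_i)) \notin H$, closedness of $H$ and continuity of $\gamma' \mapsto \Phi_p(\gamma'(t_i'))$ let me shrink $U_0$ so that $\Phi_p(\gamma'(t_i')) \notin H$ throughout, and the hypothesis is vacuous. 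Otherwise $\Phi_p(\gamma(t_i)) \in H$, and the hypothesis $\Phi_p(\gamma'(t_i')) \in H$ gives $\psi_i^{\gamma'}(1) = \Phi_p(\gamma'(t_i'))\Phi_p(\gamma(t_i))^{-1} \in H \cap V$. Applying $\exp^{-1}$ on $V$ forces $v_i(\gamma') \in \Lie(H) \cap W$, and since $W$ is star-shaped, $\psi_i^{\gamma'}(s) = \exp(s\, v_i(\gamma')) \in \exp(\Lie(H)) \subset H$ for all $s \in \R$, as required.

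Finally, I promote $U_0$ to a $\C^\times$-equivariant neighbourhood $U := \C^\times \cdot U_0$. The conclusion of the lemma is invariant under the $\C^\times$-action on $\Fr(T)$: Proposition \ref{actionn}, applied with the identity morphism of $P$, shows that the basis for $\Lambda_{\lambda\gamma'}$ corresponding to $(t_1,t_2)$ is $\frac{1}{\lambda}(t_1', t_2')$, and a direct check gives $(\lambda\gamma')(s \cdot \tfrac{1}{\lambda} t_i') = \gamma'(s t_i')$ while $\gamma(st_i)$ is untouched. Thus both the hypothesis and the conclusion at $\lambda \gamma'$ reduce to those at $\gamma'$, so $U := \C^\times \cdot U_0$ is an open $\C^\times$-equivariant neighbourhood of $\gamma$ satisfying the claim. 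The main obstacle I anticipate is the bookkeeping around continuous dependence: verifying that $v_i(\gamma')$ really varies continuously in $\gamma'$ (which requires tracing how $\eta_{\gamma',\gamma}^{-1}$ depends on $\gamma'$) and checking the $\C^\times$-scaling identity for $\eta$; neither is conceptually difficult, but both must be assembled carefully for the neighbourhood $U_0$ and its $\C^\times$-orbit to do their job.
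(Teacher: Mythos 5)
Your argument is correct, but it reaches the conclusion by a genuinely different mechanism than the paper. You linearise at the identity of the ambient group: you observe that $s\mapsto\gamma'(st_i')\gamma(st_i)^{-1}$ is a one-parameter subgroup of $\Gau(T)$ (via $\tilde\gamma'(st_i')=\tilde\gamma(st_i)$), push it into $G$ by $\Phi_p$, and then use a Cartan exponential chart $(W,V)$ adapted to the closed subgroup $H$, so that smallness of the generator $v_i(\gamma')$ plus $\psi_i^{\gamma'}(1)\in H\cap V=\exp(\Lie(H)\cap W)$ forces $v_i(\gamma')\in\Lie(H)$ and hence $\psi_i^{\gamma'}(\R)\subset H$; $\C^\times$-equivariance is obtained a posteriori by saturating $U_0$, using that hypothesis and conclusion are unchanged along the $\C^\times$-orbit. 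The paper instead stays inside the compact abelian group $A_p(T)$ and its closed subgroup $H_p=A_p(T)\cap H$: it decomposes $\ev_1^{-1}(H_p^0)\subset\Hom(\R,A_p^0)$ into affine components indexed by the lattice $\Hom(\R/\Z,A_p^0/H_p^0)$, chooses a neighbourhood of the trivial one-parameter subgroup meeting only the zero component, and separately chooses neighbourhoods $V_i$ of $\Phi_p(\gamma(t_i))$ separating the (finitely many) components of $H_p$; the $\C^\times$-equivariance is built in by checking each defining map on $\Fr(T)$ is $\C^\times$-invariant. Your chart-based route is arguably more standard and handles the component issue of $H$ automatically (since $H\cap V$ lies in the identity component), at the cost of needing continuity of $\gamma'\mapsto v_i(\gamma')$, i.e.\ the homeomorphism $\Hom(\R,G)\cong\Lie(G)$ and continuous dependence of $t_i'=\eta_{\gamma',\gamma}^{-1}(t_i)$ on $\gamma'$ --- inputs of the same nature as the continuity the paper itself invokes, so this is bookkeeping rather than a gap. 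Your case split on whether $\Phi_p(\gamma(t_i))\in H$ (vacuity otherwise, using closedness of $H$) is also sound and plays the role of the paper's $V_i$. One cosmetic correction: with the paper's convention $(\lambda\cdot\gamma')(z)=\gamma'(\bar\lambda z)$, the basis of $\Lambda_{\lambda\gamma'}$ corresponding to $(t_1,t_2)$ is $\tfrac{1}{\bar\lambda}(t_1',t_2')$, not $\tfrac{1}{\lambda}(t_1',t_2')$; with that conjugate inserted your invariance computation $(\lambda\gamma')(s\cdot\tfrac{1}{\bar\lambda}t_i')=\gamma'(st_i')$ is exactly the identity the paper uses, so the saturation step goes through as you intended.
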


\begin{proof}
We write $A_p$ for $A_p(T)$ and $H_p$ for the intersection $A_p \cap H$. Then $H_p \leq A_p$ is a compact abelian subgroup since $H$ is closed and $A_p$ is compact abelian. Let $A^0_p$ and $H^0_p$ denote the respective identity components and consider the diagram 
\[
\begin{tikzcd}
\Hom(\R/\Z,H^0_p) \ar[r,hook] \ar[d,hook] & \Hom(\R/\Z,A^0_p) \ar[r,two heads] \ar[d,hook] & \Hom(\R/\Z,A^0_p/H^0_p) \ar[d,hook] \\
\Hom(\R,H^0_p) \ar[r,hook] \ar[d,two heads,"\ev_1"] & \Hom(\R,A^0_p) \ar[r,two heads] \ar[d,two heads,"\ev_1"] & \Hom(\R,A^0_p/H^0_p) \ar[d,two heads,"\ev_1"]  \\
H^0_p \ar[r,hook] & A^0_p \ar[r,two heads] & A^0_p/H^0_p  
\end{tikzcd}
\]
with exact rows and columns induced by the exact sequences $0 \to \Z \to \R \to \R/\Z \to 0$ and $1 \to H^0_p \to A^0_p \to A^0_p/H^0_p \to 1$. The identifications labeled by $\ev_1$ are given by evaluation at $1 \in \R$. By inspecting the diagram one sees that the preimage of $H^0_p$ in $\Hom(\R,A^0_p)$ under $\ev_1$ is precisely the subgroup of elements of the form $\alpha + \beta$, where $\alpha:\R \to A^0_p$ has image contained in $H^0_p$ and $\beta:\R \to A^0_p$ is the pullback of some homomorphism $\R/\Z \to A^0_p$. In other words
\begin{align*}
\ev_1^{-1}(H^0_p) &= \Hom(\R,H^0_p) + \Hom(\R/\Z,A^0_p) \quad  \subset \Hom(\R,A^0_p) \\
&= \coprod_{[\beta] \in \Hom(\R/\Z,A^0_p/H^0_p)} \Hom(\R,H^0_p) + \beta 
\end{align*}
Since the image of $\Hom(\R/\Z,A^0_p/H^0_p)$ in $\Hom(\R,A^0_p/H^0_p)$ is a lattice, there is an open neighbourhood $V$ of the trivial map in $\Hom(\R,A^0_p)$ which intersects only the component of $\ev_1^{-1}(H^0_p)$ indexed by $[\beta] = [0]$. Therefore, if $\phi$ is a one-parameter subgroup in $V$ sending $1$ into $H^0_p$, then the image of $\phi$ is contained in $H_p$. 

The following argument applies for $i = 1,2$. Write $g_i$ for the element $\Phi_p(\gamma(t_i))$ in $A_p$. Since $H_p$ is compact, it consists of finitely many connected components in $A_p$. It follows that there exists an open neighbourhood $V_i$ of $g_i \in A_p$ that does not intersect any component of $H_p$ that does not contain $g_i$. Let $W_i$ be the preimage of $V_i$ under the continuous map
\begin{align*}
\Fr(T) &\rightarrow A_p \\
\gamma' &\mapsto \Phi_p(\gamma'(t_i')).
\end{align*}
This map is invariant under the $\C^\times$-action on $\Fr(T)$, since it sends $\lambda \cdot \gamma'$ to
\[
\Phi_p ((\lambda \cdot \gamma') (t_i'/\bar{\lambda} ) ) =  \Phi_p ( \gamma' (\bar{\lambda} t_i'/\bar{\lambda})) =  \Phi_p(\gamma'(t_i')),
\]
where we have used that $\eta_{\lambda \gamma',\gamma}^{-1}(t_i) = \frac{1}{\bar{\lambda}} t_i'$. Therefore $W_i$ is a $\C^\times$-equivariant open neighbourhood of $\gamma$. Furthermore, let $U_i$ be the preimage of $V$ under the continuous map
\begin{align*}
\Fr(T) &\to \Hom(\R,A_p)  \\
\gamma' &\mapsto \Phi_p(\gamma'(st_i')\gamma(st_i)^{-1})
\end{align*}
which is also $\C^\times$-invariant, for the same reason as the previous map. Thus $V_i$ is also a $\C^\times$-equivariant open neighbourhood of $\gamma$. Then
\[
U := U_1 \cap U_2 \cap W_1 \cap W_2
\]
is a $\C^\times$-equivariant open neighbourhood of $\gamma$ and we will show that it has the property claimed in the lemma. Let $\gamma'$ be a frame in $U \subset \Fr(T)$ and suppose that 
\[
g_i' := \Phi_p(\gamma'(t_i')) \in H_p = A_p \cap H.
\]
This immediately implies that $g_i'$ is contained in $V_i \subset A_p$, since if $\gamma'$ lies in $U$ then it also lies in $W_i$. Since $g_i' \in H_p$, both $g_i'$ and $g_i$ lie in the same component of $H_p$ since $V_i$ was chosen with this property. This means that $g_i'g_i^{-1}$ lies in $H^0_p$. Therefore, since $\gamma' \in U_i$, the one-parameter subgroup 
\[
s \mapsto \Phi_p(\gamma'(st_i')\gamma(st_i)^{-1})  \in A_p
\] 
lies in $V \subset \Hom(\R,A_p)$, which implies by the discussion above that its image is contained in $H_p \leq H$ for all $s \in\R$. This completes the proof.
\end{proof}

\begin{proposition}\label{nbhd}
Let $X$ be a finite $G$-complex, let $(P,T)$ be an object in $\mathrm{Orb}_{\L^2_G}^{\ab}$ and let $\gamma \in \Fr(T)$. There exists a $\C^\times$-equivariant open neighbourhood $U$ of $\gamma$ such that 
\[
\Map_G(P,X)^{U} := \bigcup_{\gamma' \in U} \Map_G(P,X)^{\gamma'} = \Map_G(P,X)^{\gamma}. 
\]
\end{proposition}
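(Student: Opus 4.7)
The plan is to identify $\Map_G(P,X)^{\gamma'}$ via holonomy, reduce the inclusion $\Map_G(P,X)^{\gamma'} \subseteq \Map_G(P,X)^{\gamma}$ to a condition controlled by Lemma \ref{diff}, and take a finite intersection of neighbourhoods indexed by the finitely many $A_p(T)$-orbit types appearing in $X$.

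First, I would fix $p \in P$ and a basis $(t_1,t_2)$ of $\Lambda_\gamma$, and for each nearby $\gamma'$ let $(t_1',t_2') := (\eta^{-1}_{\gamma',\gamma}(t_1), \eta^{-1}_{\gamma',\gamma}(t_2))$ be the corresponding basis of $\Lambda_{\gamma'}$. A $G$-map $f \in \Map_G(P,X)^{\gamma'}$ is uniquely determined by $x_0 := f(p)$, which must lie in $X^{H(\gamma')}$ where $H(\gamma') := \Phi_p(\gamma'(\Lambda_{\gamma'}))$; explicitly $f(\gamma'(z)(p)\cdot h) = x_0 \cdot h$. For any $z \in \C$, setting $z' := \eta^{-1}_{\gamma',\gamma}(z)$, the diffeomorphisms $\tilde{\gamma}(z)$ and $\tilde{\gamma}'(z')$ both equal translation by the class of $z$ in $\C/\Lambda_\gamma$ under the identification $\Sigma \cong \C/\Lambda_\gamma$, so $\delta(z) := \gamma'(z')^{-1}\gamma(z)$ covers the identity in $\Diff(\Sigma)$ and hence lies in $\Gau(T)$. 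Since $z \mapsto z'$ is $\R$-linear and $T$ is abelian, $\delta\colon \C \to \Gau(T)$ is a continuous Lie group homomorphism, and therefore so is $g := \Phi_p \circ \delta\colon \C \to A_p(T)$.

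Second, from $\gamma(z)(p) = \gamma'(z')(p)\cdot g(z)$ and the parallel-transport formula, one computes $f(\gamma(z)(p)) = x_0 \cdot g(z)$. Thus $f \in \Map_G(P,X)^\gamma$ if and only if $g(z) \in G_{x_0}$ for all $z \in \C$, which (using that $g$ is a homomorphism, the decomposition $\C = \R t_1 \oplus \R t_2$, and closure of $G_{x_0}$ under inverses) is equivalent to $\Phi_p(\gamma'(st_i')\gamma(st_i)^{-1}) \in G_{x_0}$ for all $s \in \R$ and $i = 1,2$, noting that $\Phi_p(\gamma'(st_i')^{-1}\gamma(st_i))$ is the inverse of $\Phi_p(\gamma'(st_i')\gamma(st_i)^{-1})$ inside the abelian group $A_p(T)$.

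Third, since $X$ is a finite $G$-CW complex and $A_p(T)$ is a compact Lie group, there are only finitely many $A_p(T)$-orbit types in $X$; let $K_1,\dots,K_m$ be the resulting finite list of $A_p(T)$-isotropy subgroups. For each $j$, apply Lemma \ref{diff} with $H = K_j$ to obtain a $\C^\times$-equivariant open neighbourhood $U_j$ of $\gamma$ in $\Fr(T)$, and set $U := \bigcap_{j=1}^m U_j$. Given $\gamma' \in U$ and $f \in \Map_G(P,X)^{\gamma'}$ with $x_0 := f(p)$, the elements $\Phi_p(\gamma'(t_i'))$ lie in $A_p(T) \cap G_{x_0} = K_j$ for some $j$, and Lemma \ref{diff} then forces $\Phi_p(\gamma'(st_i')\gamma(st_i)^{-1}) \in K_j \subseteq G_{x_0}$ for all $s \in \R$, whence $f \in \Map_G(P,X)^\gamma$ by the previous step. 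The reverse inclusion $\Map_G(P,X)^\gamma \subseteq \Map_G(P,X)^U$ is automatic from $\gamma \in U$. The main technical check is that $\delta$ is a homomorphism landing in $\Gau(T)$ via the translation identification $\tilde{\gamma}(z) = \tilde{\gamma}'(z')$ on $\Sigma$; finiteness of $A_p(T)$-orbit types is a standard compact Lie group fact, after which the proof is a direct application of Lemma \ref{diff} over the finite list $\{K_j\}$.
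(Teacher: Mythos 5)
Your argument is correct and is essentially the paper's proof: fix $p$ and a basis of $\Lambda_\gamma$, apply Lemma \ref{diff} to finitely many closed subgroups, intersect the resulting $\C^\times$-equivariant neighbourhoods, and then show that any $f$ fixed by some $\gamma' \in U$ satisfies $f(\gamma(z)(p)) = f(p)$ and hence lies in $\Map_G(P,X)^\gamma$. Two of your choices differ from the paper in ways worth recording. First, you index the intersection by the $A_p(T)$-isotropy subgroups of $X$ rather than by its $G$-isotropy subgroups; since $A_p(T)$ is abelian, finiteness of orbit types genuinely yields a finite list of subgroups, whereas a finite $G$-complex has only finitely many isotropy groups \emph{up to conjugacy} (the paper intersects over ``all isotropy groups of $X$''), so your indexing is the more careful one while still feeding the specific subgroup $A_p(T) \cap G_{x_0}$ needed for Lemma \ref{diff}. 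Second, packaging the comparison of $\gamma$ and $\gamma'$ into the gauge-valued homomorphism $\delta(z) = \gamma'(z')^{-1}\gamma(z) \in \Gau(T)$, with criterion $\Phi_p(\delta(z)) \in G_{x_0}$ for all $z$, makes explicit the passage from the two one-parameter directions $\R t_1, \R t_2$ to all of $\C$, which the paper handles more tersely via ``$t_1,t_2$ is an $\R$-basis for $\C$''; the small cost is that your step relies on the parallel-transport description of elements of $\Map_G(P,X)^{\gamma'}$, i.e.\ the easy direction of Proposition \ref{fixed}, which is proved independently, so no circularity arises.
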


\begin{proof}
Fix a point $p \in P$ and suppose that $t_1,t_2$ are a basis for $\Lambda_\gamma$. If $\gamma'$ is another frame in $\Fr(T)$, then we continue to write $t_1',t_2'$ for the basis of $\Lambda_{\gamma'}$ corresponding to $t_1,t_2$ via $\eta_{\gamma',\gamma}$. By Lemma \ref{diff}, for each isotropy group $H$ in $X$ there exists a $\C^\times$-equivariant open neighbourhood $U_H$ of $\gamma \in \Fr(T)$ such that
\[
\Phi_p(\gamma'(t_i')) \in H \quad \implies \quad \Phi_p(\gamma'(st_i')\gamma(st_i)^{-1}) \in H \quad \forall s \in \R
\]
for all $\gamma' \in U_H$. Since $X$ is a finite $G$-complex, it has only has finitely many isotropy groups. It follows that the intersection 
\[
U := \bigcap_{H\, \in \, \text{isotropy}(X)} \ U_H
\]
indexed over all isotropy groups of $X$ is a $\C^\times$-equivariant open neighbourhood of $\gamma$ with the property that
\[
\Phi_p(\gamma'(t_i')) \in H \quad \implies \quad \Phi_p(\gamma'(st_i')\gamma(st_i)^{-1}) \in H \quad \forall s \in \R
\]
for all $\gamma' \in U$ and any isotropy group $H$ of $X$. To prove the claim, it will suffice to show that 
\[
\Map_G(P,X)^{\gamma'} \subset \Map_G(P,X)^{\gamma}
\]
for all $\gamma' \in U$. Let $f \in \Map_G(P,X)^{\gamma'}$ and let $H \leq G$ be the isotropy group of $f(p) \in X$. Firstly, note that we have
\[
f(p) \cdot \Phi_p(\gamma'(t_i')) = f(p \cdot \Phi_p(\gamma'(t_i'))) = f(\gamma'(t_i')(p)) = f(p)
\]
since $f$ is both $G$-equivariant and fixed by $\gamma'$. Therefore, 
\[
\Phi_p(\gamma'(t_i')) \in H \quad \text{for} \ i = 1,2.
\]
Since $\gamma' \in U$ and $U$ has the property stated above, this implies that 
\[
\Phi_p(\gamma'(st_i')\gamma(st_i)^{-1}) \in H \qquad \text{for all} \ s \in \R \ \text{and} \ i = 1,2.
\]
It follows that
\begin{align*}
f(\gamma(st_i)(p)) &= f((\gamma(st_i) \gamma'(st_i')^{-1} \gamma'(st_i'))(p)) \\
& = f(\gamma'(st'_i)(p)) \cdot \Phi_p(\gamma(st_i)\gamma'(st_i')^{-1}) \\
& = f(p) \cdot \Phi_p(\gamma'(st_i')\gamma(st_i)^{-1})^{-1} \\
& = f(p) 
\end{align*}
for all $s\in \R$ and $i = 1,2$. Since $t_1,t_2$ is an $\R$-basis for $\C$, this means that $f(\gamma(z)(p)) = f(p)$ for all $z \in \C$. Therefore $f\in \Map_G(P,X)^{\gamma}$.
\end{proof}

\begin{proposition}\label{fixed}
Let $X$ be a finite $G$-complex, let $(P,T)$ be an object in $\mathrm{Orb}_{\L^2_G}^{\ab}$, let $\gamma \in \Fr(T)$ and choose $p \in P$. Then the map 
\[
\ev_p: \Map_G(P,X)^\gamma =  \Map_G(P,X)^{\gamma(\C)} \to X^{\Phi_p(\gamma(\Lambda_\gamma))}
\]
given by evaluation at $p$ is a homeomorphism. There are isomorphisms of Lie groups
\[
T/\widebar{\gamma(\C)} \xleftarrow{\sim} \Gau(T)/\widebar{\gamma(\Lambda_\gamma)} \xrightarrow{\sim} A_p(T)/\overline{\Phi_p(\gamma(\Lambda_\gamma))}
\]
where the map on the left is induced by the inclusion of $\Gau(T) := T\cap \Gau(P)$ into $T$ and the map on the right is induced by the isomomorphism 
\[
\Phi_p: \Gau(T) \xrightarrow{\sim} A_p(T).
\]
The map $\ev_p$ is equivariant with respect to the composite 
\[
T/\widebar{\gamma(\C)} \cong A_p(T)/\overline{\Phi_p(\gamma(\Lambda_\gamma))}
\]
of these isomorphisms. Finally, the image of any map in $\Map_G(P,X)^\gamma$ is contained in a single $G$-orbit of $X$. 
\end{proposition}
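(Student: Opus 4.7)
My plan has four parts, one per assertion in the proposition.

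\textbf{Bijectivity of $\ev_p$.} First, transitivity of the $\gamma(\C)$-action on $\Sigma$, combined with the $G$-bundle structure, makes the smooth map $(z,g) \mapsto \gamma(z)(p)\cdot g$ a quotient $\C \times G \twoheadrightarrow P$. For $f \in \Map_G(P,X)^{\gamma(\C)}$, combining $G$-equivariance with $\gamma(\C)$-invariance forces $f(\gamma(z)(p)\cdot g) = f(p)\cdot g$, so $f$ is determined by $f(p)$ and $\ev_p$ is injective. Specialising to $z = \lambda \in \Lambda_\gamma$, $g = e$, and invoking $\gamma(\lambda)(p) = p\cdot \Phi_p(\gamma(\lambda))$, forces $f(p) \in X^{\Phi_p(\gamma(\Lambda_\gamma))}$.

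\textbf{Surjectivity and the inverse homeomorphism.} Given $x \in X^{\Phi_p(\gamma(\Lambda_\gamma))}$, I will define $f_x(\gamma(z)(p)\cdot g) := x\cdot g$. Well-definedness reduces to the observation that two representations of the same point in $P$ differ by $(z,g) \mapsto (z-\lambda, \Phi_p(\gamma(\lambda))g)$ for some $\lambda \in \Lambda_\gamma$, which is absorbed by invariance of $x$. Continuity of $f_x$ follows by pulling back along $\C\times G \twoheadrightarrow P$, where the pullback is the continuous map $(z,g) \mapsto x\cdot g$; $G$-equivariance and $\gamma(\C)$-invariance of $f_x$ are then immediate. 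Continuity of $x \mapsto f_x$ follows from the CGWH exponential law applied to the adjoint $X^{\Phi_p(\gamma(\Lambda_\gamma))}\times P \to X$, which again pulls back to a manifestly continuous map.

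\textbf{The two isomorphisms of Lie groups.} The right-hand arrow is the restriction of the isomorphism $\Phi_p: \Gau(T) \xrightarrow{\sim} A_p(T)$ of Lemma \ref{inject}, descended to quotients after noting $\Phi_p(\widebar{\gamma(\Lambda_\gamma)}) = \widebar{\Phi_p(\gamma(\Lambda_\gamma))}$ by continuity. The left-hand arrow, induced by the inclusion $\Gau(T) \hookrightarrow T$, is well-defined and injective since $\Gau(T) \cap \widebar{\gamma(\C)} = \widebar{\gamma(\Lambda_\gamma)}$ by Proposition \ref{framestori}. I expect the main obstacle to be surjectivity, which I will reduce to the claim $\widebar{\tilde{\gamma}(\C)} = \tilde{T}$ inside $\Diff(\Sigma)$. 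Since $\tilde{T}$ contains the transitive subgroup $\widebar{\tilde{\gamma}(\C)}$, it acts transitively on the connected surface $\Sigma$; abelianness forces all isotropy subgroups in $\tilde{T}$ to coincide, and this common isotropy must therefore fix the entire orbit $\Sigma$, hence is trivial in $\Diff(\Sigma) \supseteq \tilde{T}$. The group $\widebar{\tilde{\gamma}(\C)}$ also acts simply transitively (its isotropy at $\pi(p)$ is the $\tilde\pi$-image of $\widebar{\gamma(\Lambda_\gamma)} \subseteq \Gau(P) = \ker \tilde\pi$, hence trivial). The two orbit maps to $\Sigma$ at $\pi(p)$ are then bijections fitting into a commutative triangle with the inclusion $\widebar{\tilde{\gamma}(\C)} \hookrightarrow \tilde{T}$, forcing the inclusion to be bijective. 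Hence $T = \Gau(T)\cdot \widebar{\gamma(\C)}$, yielding surjectivity.

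\textbf{Equivariance and single $G$-orbit.} Equivariance of $\ev_p$ is the direct computation that for $s \in \Gau(T)$, the action of $s$ on $f$ evaluated at $p$ yields $f(p)$ translated by $\Phi_p(s)$, matching the $A_p(T)$-action on $X^{\Phi_p(\gamma(\Lambda_\gamma))}$ via the composite isomorphism established above. Finally, the formula $f(\gamma(z)(p)\cdot g) = f(p)\cdot g$ from the first step exhibits $f(P) \subseteq f(p)\cdot G$, a single $G$-orbit.
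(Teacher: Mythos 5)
Your proposal is correct and its skeleton coincides with the paper's: the same explicit inverse $x \mapsto f_x$ with the same well-definedness computation absorbing the $\Lambda_\gamma$-ambiguity into $x$ being $\Phi_p(\gamma(\Lambda_\gamma))$-fixed, the same equivariance identity $f(j(p)) = f(p)\cdot \Phi_p(j)$ for $j \in \Gau(T)$, and the same one-line single-orbit observation. Where you genuinely diverge is in the topology and in the surjectivity of the left-hand quotient map. For continuity, the paper argues directly with the compact-open topology at the point $p$, whereas you present $P$ as the quotient of $\C \times G$ by the $\Lambda_\gamma$-action $(z,g)\mapsto(z-\lambda,\Phi_p(\gamma(\lambda))g)$ and invoke the CGWH exponential law; this buys a single argument giving continuity of each $f_x$ and of $x \mapsto f_x$ simultaneously, and it is arguably more transparent than the paper's treatment of the inverse — but you should justify the assertion that $\C\times G \twoheadrightarrow P$ is a quotient map (it is: the map factors through the covering $\C\times G \to (\C\times G)/\Lambda_\gamma$ followed by a continuous bijection from a compact space to the Hausdorff space $P$, hence a homeomorphism). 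For the isomorphism $\Gau(T)/\widebar{\gamma(\Lambda_\gamma)} \xrightarrow{\sim} T/\widebar{\gamma(\C)}$, the paper simply asserts that each coset $j\widebar{\gamma(\C)}$ has a representative $j\gamma(z)^{-1} \in \Gau(T)$ with $\tilde{\gamma}([z]) = \tilde{\pi}(j)$; your simply-transitivity argument (abelian plus transitive plus effective forces $\tilde{T} = \tilde{\gamma}(\C/\Lambda_\gamma)$ inside $\Diff(\Sigma)$) supplies exactly the missing justification for why such a $z$ exists, and your injectivity step $\Gau(T)\cap\widebar{\gamma(\C)} = \widebar{\gamma(\Lambda_\gamma)}$ is indeed available, though it comes from the isotropy computation inside the proof of Proposition \ref{framestori} rather than from its statement. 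These are presentational rather than substantive deviations; no step of your plan would fail.
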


\begin{proof}
It is clear that $\ev_p$ is continuous, since $\Map_G(P,X)^\gamma$ has the subspace topology induced by $\Map_G(P,X)$, which has the compact-open topology. For a given $x \in X^{\Phi_p(\gamma(\Lambda_\gamma))}$, we construct a $G$-map $f:P \to X$ such that $f(p) = x$. For an arbitrary point $p'$ in $P$, define
\[
f(p') = x \cdot h
\] 
where $h$ is an element in $G$ such that there exists a complex number $z$ with $\gamma(z)(p)\cdot h = p'$. To see that $f(p')$ does not depend on the choice of $h$, suppose that $h' \in G$ and $z' \in \C$ also satisfy $\gamma(z')(p)\cdot h' = p'$. Then $z - z'$ lies in $\Lambda_\gamma$, and we have
\[
p' = \gamma(z)(p)\cdot h = \gamma(z + z' - z') (p) \cdot h = \gamma(z') (p) \cdot \Phi_p(\gamma(z-z'))h.
\] 
Therefore $h' =  \Phi_p(\gamma(z-z'))h$, which implies that $x \cdot h = x \cdot h'$, since $x$ is fixed by $\Phi_p(\gamma(z-z')) \in \Phi_p(\gamma(\Lambda_\gamma))$. So $f$ is well defined. Furthermore, $f$ is $G$-equivariant, since 
\[
f(p'\cdot g) = x \cdot hg = f(p') \cdot g
\]
by definition of $f$. We now show that $f$ is fixed by $\gamma(\C)$. Let $z'' \in \C$ be arbitrary and let $p'' = \gamma(z'')(p')$. Then we have $\gamma(z''+z)(p)\cdot h = \gamma(z'')(p') = p''$ and so
\[
f(p'') = x \cdot h = f(p').
\]
Therefore, we have defined a two-sided inverse to $\ev_p$, so that $\ev_p$ is bijective. To see that the inverse is continuous, let $V_x$ be the set of maps in $\Map_G(P,X)^\gamma$ corresponding to an open neighbourhood $U_x$ of $x \in X^{\Phi_p(\gamma(\Lambda_\gamma))}$ under the established bijection. Then $V_x$ is precisely the set of $G$-maps $f: P \to X$ sending $p$ into $U_x$ such that $f$ is fixed by $\gamma$. In other words, $V_x$ is the intersection of $\Map_G(P,X)^\gamma$ with the set of maps in $\Map_G(P,X)$ sending $p$ into $U_x$, and is therefore open in the subspace topology on $\Map_G(P,X)^\gamma \subset \Map_G(P,X)$.

It is clear that the homomorphism of Lie groups induced by $\Phi_p$ is an isomorphism. The map
\[
\Gau(T)/\widebar{\gamma(\Lambda_\gamma)} \xrightarrow{\sim} T/\widebar{\gamma(\C)} 
\]
induced by the inclusion $\Gau(T) \hookrightarrow T$ is an isomorphism because any given coset $j \widebar{\gamma(\C)}$ in $T/\widebar{\gamma(\C)}$ has a representative $j  \gamma(z)^{-1}$ in $\Gau(T)$, where $z$ is any complex number such that $\tilde{\gamma}([z]) = \tilde{\pi}(j)$. The $T/\widebar{\gamma(\C)}$-action on $\Map_G(P,X)^\gamma$ clearly factors through this isomorphism. Thus, to show equivariance of $\ev_p$ with respect to 
\[
T/\widebar{\gamma(\C)} \cong \Gau(T)/\widebar{\gamma(\Lambda_\gamma)} \cong A_p(T)/\overline{\Phi_p(\gamma(\Lambda_\gamma))}
\]
it suffices to observe that   
\[
f(j(p)) = f(p) \cdot \Phi_p(j)
\]
for each $j \in \Gau(T)$, just by definition of $\Phi_p$. Finally, by definition of the inverse map of $\ev_p$, it is clear that any $f \in \Map_G(P,X)^\gamma$ takes values in the orbit $f(p) \cdot G \subset X$. Therefore, the final claim of the proposition is also proved.
\end{proof}

\begin{corollary}\label{stlk}
Let $X$ be a finite $G$-complex. For any object $(P,T)$ in $\mathrm{Orb}_{\L^2_G}^{\ab}$ and any frame $\gamma \in \Fr(T)$, the $T$-space $\Map_G(P,X)^\gamma$ is $T$-homotopy equivalent to a finite $T$-complex.
\end{corollary}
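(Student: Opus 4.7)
The strategy is to transport the problem along the equivariant homeomorphism supplied by Proposition \ref{fixed} and then invoke the standard theory of fixed-point subspaces of finite equivariant CW complexes. Concretely, Proposition \ref{fixed} gives a $T$-equivariant homeomorphism
\[
\ev_p: \Map_G(P,X)^\gamma \xrightarrow{\sim} X^{\Phi_p(\gamma(\Lambda_\gamma))},
\]
where the $T$-action on the right factors through $T \twoheadrightarrow T/\widebar{\gamma(\C)} \xrightarrow{\sim} A_p(T)/\overline{\Phi_p(\gamma(\Lambda_\gamma))}$, with the target acting on $X$ by restriction of the $G$-action. Since fixed-point subspaces only depend on the closure of the acting subgroup, this target space equals $X^{H}$ for $H := \widebar{\Phi_p(\gamma(\Lambda_\gamma))}$, a compact abelian closed subgroup of $G$.

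The plan is then to produce a finite $T$-CW structure on $X^H$ compatible with this action. I would proceed in two steps. First, since $X$ is a finite $G$-CW complex and $A_p(T) \leq G$ is a closed subgroup, restriction turns $X$ into a finite $A_p(T)$-CW complex: each orbit type $G/K$ is a compact smooth manifold with a smooth action of the closed Lie subgroup $A_p(T)$, hence admits a finite $A_p(T)$-CW structure by Illman's triangulation theorem, and these can be assembled into a global $A_p(T)$-CW structure on $X$. Second, because $A_p(T)$ is abelian, $H \leq A_p(T)$ is central in $A_p(T)$, so $X^H$ is $A_p(T)$-invariant and inherits a finite $A_p(T)$-CW subcomplex structure on which $H$ acts trivially; this descends to a finite $K$-CW structure for $K := A_p(T)/H$.

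Finally, pulling this $K$-CW structure back along the surjection $\pi: T \twoheadrightarrow T/\widebar{\gamma(\C)} \xrightarrow{\sim} K$ gives a finite $T$-CW structure on $X^H$, since each $K$-orbit $K/K'$ becomes the $T$-orbit $T/\pi^{-1}(K')$. Composed with $\ev_p$, this exhibits $\Map_G(P,X)^\gamma$ as $T$-homeomorphic to a finite $T$-CW complex, which is stronger than the asserted $T$-homotopy equivalence. The principal technical point I would need to justify carefully is the transfer of CW structure through restriction to a closed subgroup and through passage to fixed points; I would cite Illman's theorem for the former and the standard result that fixed-point subspaces of closed subgroups are $WH$-CW subcomplexes for the latter.
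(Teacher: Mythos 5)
Your overall route --- transporting the problem along the homeomorphism $\ev_p$ of Proposition \ref{fixed}, reducing to the fixed-point space $X^{H}$ for the compact abelian subgroup $H=\overline{\Phi_p(\gamma(\Lambda_\gamma))}\leq A_p(T)$, and pulling the resulting structure back along $T \twoheadrightarrow T/\widebar{\gamma(\C)}\cong A_p(T)/H$ --- is the same as the paper's. The gap is in your restriction step. You assert that a finite $G$-CW complex becomes an honest finite $A_p(T)$-CW complex upon restricting the action, by triangulating each orbit type $G/K$ as an $A_p(T)$-space and then ``assembling'' these triangulations. The assembly does not work as stated: the attaching maps $G/K\times S^{n-1}\to X^{(n-1)}$ of the $G$-cells are not cellular with respect to the chosen $A_p(T)$-CW structures, and gluing along non-cellular maps does not produce a CW complex. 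The attaching maps can be made cellular only up to equivariant homotopy, which changes the space up to $A_p(T)$-homotopy equivalence --- and this is exactly the content of Illman's restriction theorem (Theorem A(1) of \cite{Illman}, the very result the paper cites), which produces an $A_p(T)$-homotopy equivalence $X \to \check{X}$ to a finite $A_p(T)$-complex with $\check{X}^{H}$ a finite complex, not an $A_p(T)$-CW structure on $X$ itself. Consequently your closing claim that $\Map_G(P,X)^\gamma$ is $T$-\emph{homeomorphic} to a finite $T$-CW complex is not justified.

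The damage is limited, because the corollary only asks for a $T$-homotopy equivalence. Replace your asserted CW structure on $X$ by Illman's homotopy equivalence: an $A_p(T)$-homotopy equivalence restricts on $H$-fixed points to an $A_p(T)/H$-homotopy equivalence (the equivariant homotopy inverse and homotopies restrict to fixed points), so $X^{H}$ is $A_p(T)/H$-homotopy equivalent to the finite complex $\check{X}^{H}$; then transport through $\ev_p$ and pull back along $T\to T/\widebar{\gamma(\C)}$, where, as you note, each orbit $K/K'$ becomes $T/\pi^{-1}(K')$. Your remark that $H$-fixed points form a subcomplex is fine here precisely because $A_p(T)$ is abelian, so $(A_p(T)/K)^{H}$ is either everything or empty; for nonabelian groups that step would need more care. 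With these corrections your argument coincides with the paper's proof.
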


\begin{proof}
We write $A_p$ for $A_p(T)$ and $\Phi_p$ for $\overline{\Phi_p(\gamma(\Lambda_\gamma))}$. Since $X$ is a finite $G$-complex, there exists an $A_p$-complex $\check{X}$ and an $A_p$-homotopy equivalence $X \to \check{X}$ such that $\check{X}^{\Phi_p}$ is a finite $A_p$-complex, by Theorem A(1) in \cite{Illman}. Therefore, $X^{\Phi_p}$ is $A_p/\Phi_p$-homotopy equivalent to a finite $A_p/\Phi_p$-complex.
It now follows from Proposition \ref{fixed} that $\Map_G(P,X)^\gamma$ is $T/\widebar{\gamma(\C)}$-homotopy equivalent to a finite $T/\widebar{\gamma(\C)}$-complex. Pulling the action back along $T \to T/\widebar{\gamma(\C)}$ yields the result.
\end{proof}

\end{document}